\documentclass[amstex,12pt,russian,amssymb]{article}

\usepackage{mathtext}
\usepackage[cp1251]{inputenc}
\usepackage[T2A]{fontenc}
\usepackage[russian]{babel}
\usepackage[dvips]{graphicx}
\usepackage{amsmath}
\usepackage{amssymb}
\usepackage{amsxtra}
\usepackage{latexsym}
\usepackage{ifthen}

\textheight245mm \textwidth165mm
\parindent5mm
\parskip0mm

\voffset-27.3mm \hoffset-11.5mm \pagestyle{myheadings}

\begin{document}

\newcounter{lemma}
\newcommand{\lemma}{\par \refstepcounter{lemma}%
{\bf Лемма \arabic{lemma}.}}

\newcounter{corollary}
\newcommand{\corollary}{\par \refstepcounter{corollary}%
{\bf Следствие \arabic{corollary}.}}

\newcounter{remark}
\newcommand{\remark}{\par \refstepcounter{remark}%
{\bf Замечание \arabic{remark}.}}

\newcounter{theorem}
\newcommand{\theorem}{\par \refstepcounter{theorem}%
{\bf Теорема \arabic{theorem}.}}

\newcounter{propos}
\newcommand{\propos}{\par \refstepcounter{propos}%
{\bf Предложение \arabic{propos}.}}

\renewcommand{\refname}{\centerline{\bf Список литературы}}

\newcommand{\proof}{{\it Доказательство.\,\,}}

\noindent УДК 517.5

{\bf Д.П.~Ильютко} (МГУ имени М.\,В.\,Ломоносова),

{\bf Е.А.~Севостьянов} (Житомирский государственный университет им.\
И.~Франко)

\medskip
{\bf Д.П.~Ільютко} (МДУ імені М.\,В.\,Ломоносова),

{\bf Є.О.~Севостьянов} (Житомирський державний університет ім.\
І.~Франко)

\medskip
{\bf D.P.~Ilyutko} (M.\,V.\,Lomonosov Moscow State University),

{\bf E.A.~Sevost'yanov} (Zhitomir Ivan Franko State University)

\medskip
{\bf О граничном поведении открытых дискретных отображений на
римановых многообразиях}

{\bf On boundary behavior of open discrete mappings on Riemannian
manifolds}

\medskip\medskip
Исследованы вопросы, связанные с возможностью непрерывного
продолжения некоторых классов отображений на римановых многообразиях
в точки границы заданной области. В частности, для так называемых
кольцевых отображений установлен результат о наличии непрерывного
продолжения в изолированную граничную точку. Кроме того, аналогичные
теоремы получены также и при более общих условиях на границы
заданной и отображённой областей. В качестве приложений развитой
техники доказана возможность непрерывного продолжения произвольного
открытого дискретного сохраняющего границу отображения класса
Орлича--Соболева в изолированную граничную точку.

\medskip\medskip
For some class of mappings, there are investigated problems
connected with a possibility of continuous extension to a boundary
on Riemannian manifolds. In particular, for so-called ring mappings,
there is proved a result related to continuous extension to an
isolated boundary point. Besides that, similar theorems hold for
more general boundaries. As application of developed technique,
there is proves a theorem about continuous extension of mapping of
Orlicz--Sobolev class to an isolated singularity.

%\tableofcontents
%\newpage

\section{Введение}

\subsection{} Граничное поведение отображений, как известно,
является одним из важнейших объектов исследований в теории
квазиконформных отображений и отображений с ограниченным искажением
по Решетняку, см. %\cite{Ge$_1$}, \cite{LV},
\cite{Re}, \cite{MRV$_2$}, \cite{MRSY}, \cite{Mikl$_1$}, %\cite{Mikl$_2$},
\cite{Na}, %\cite{Pol},
\cite{Pol$_1$}, %\cite{Re$_1$},
\cite{Ri}, \cite{Shab}, \cite{Sr}, \cite{TSh}, \cite{Va}, %--\cite{Va$_1$},
\cite{Vu} и \cite{Zo$_1$}. По этому поводу необходимо также отметить
значительное количество публикаций, касающихся отображений с
конечным искажением (см.,
напр., %\cite{AIKM$_1$}, \cite{Cr$_1$}
\cite{Cr$_2$}, \cite{HP}, \cite{IM}, \cite{KR} и \cite{Ra}).

Отдельно отметим работы, в которых за основу определения берутся
неравенства модульно-ём\-кост\-но\-го характера, см.
\cite{ARS}--\cite{SS}. Такой подход предложен О.~Мартио и
использовался им совместно с В.И.~Рязановым, У.~Сребро и Э.~Якубовым
в ряде совместных работ и монографий (см., напр., \cite{MRSY}). %и \cite{MRSY$_1$}).
Стоит отметить, что отображения с конечным искажением, определённые
в монографии \cite{IM}, при достаточно общих топологических и
аналитических условиях являются подклассом отображений,
удовлетворяющих указанным выше модульным оценкам (что установлено в
работе \cite{Sev$_4$}).

Остановимся теперь на статье \cite{ARS}, где исследован класс так
называемых кольцевых $Q$-гомеоморфизмов на римановых многообразиях,
в частности, получены результаты о локальном и граничном поведении.
Здесь также получены полезные приложения, касающиеся классов
Орлича--Соболева, ввиду чего можно было бы также упомянуть работу
%\cite{RSa} и
\cite{Af$_1$}.

\medskip
Стремясь усилить эти результаты вместе с тем, чтобы в некотором
смысле, поставить в них окончательную точку, мы покажем в настоящей
статье, что граничные теоремы имеют место не только для
соответствующих классов гомеоморфизмов, удовлетворяющих определённым
модульным неравенствам, но также и для так называемых открытых
дискретных отображений, т.е., отображений, сохраняющих свойство
множества быть открытым, и прообраз точки при которых есть
изолированное множество. Естественно, такие отображения могут иметь
точки ветвления (другими словами, их инъективность не обязательна).
В случае отображений с ветвлением связь кольцевых отображений с
классами Орлича--Соболева на границе области, по-видимому, может
быть установлена не во всех случаях. Тем не менее, в случае
изолированной точки границы такая связь точно имеет место, ввиду
чего нами и будет сформулирован соответствующий результат.

\medskip
Теперь немного о структуре работы и основных результатах. Статья
состоит из шести разделов -- введения и пяти смысловых параграфов,
непосредственно не взаимосвязанных между собой, но объединённых
общей идеей: отображение обязано удовлетворять какому-либо
мо\-дуль\-но-ём\-кост\-но\-му неравенству. Эти отображения
исследуются нами преимущественно с точки зрения граничного
поведения, чему и посвящена статья. Нами предлагается следующая
схема изложения настоящей работы:

\medskip
1) {\bf Введение.}

\medskip
2) {\bf Изолированные особенности кольцевых отображений относительно
$p$-мо\-дуля.} Здесь доказаны результаты, обобщающие естественным
образом теоремы об изолированных особенностях квазиконформных
отображений. <<Обобщение>> идёт сразу в трёх различных направлениях:
вместо области в ${\Bbb R}^n$ рассматривается область на римановом
многообразии ${\Bbb M}^n$ размерности $n;$ вместо ограниченной
характеристики квазиконформности $K(x)$ рассматривается произвольная
характеристика с <<умеренным>> ростом; <<порядок>> $p$ модуля
семейств кривых -- произвольный, в пределах от $n-1$ до $n$ (в
определении отображения с ограниченным искажением этот порядок
обычно $p=n,$ что соответствует хорошо известному неравенству
Полецкого, см. \cite[теорема~8.1, гл.~II]{Ri}).

\medskip
3) {\bf Граничное поведение кольцевых отображений относительно
$p$-мо\-дуля}. Здесь получены результаты о непрерывном продолжении в
граничные точки открытых дискретных отображений, сохраняющих
границу. Некоторые аналоги указанных результатов в ${\Bbb R}^n$ в
случае конформного модуля могут быть найдены в работах \cite{Na},
\cite{IR$_2$} и %\cite{RSa}.
монографии \cite{MRSY}. В настоящей статье впервые рассматривается
случай негомеоморфных отображений с неограниченной характеристикой
на многообразии, когда порядок модуля изменяется в пределах от $n-1$
до $n.$

\medskip
4) {\bf Устранение изолированных особенностей классов
Орлича--Со\-бо\-ле\-ва}. Для гомеоморфизмов подобные теоремы могут
быть получены как следствие из \cite[теорема~2.2]{KR$_1$} и
\cite[теорема~6.2]{MRSY}. Насколько нам известно, другие версии
указанных результатов, в том числе, на римановых многообразиях и с
произвольным порядком модуля, ранее никем не публиковались.

\medskip
5) {\bf Некоторые примеры. Аналог неравенства типа Вяйсяля}. Здесь
основное внимание уделяется приведению некоторых конкретных примеров
отображений класса Орлича--Соболева в контексте граничного поведения
отображений, а также установлению на римановых многообразиях
модульного неравенства типа Вяйсяля, благодаря которому широкие
классы отображений немедленно становятся примерами отображений,
исследуемых в статье.

\medskip
6) {\bf Открытость и дискретность одного класса отображений}. В
статье преимущественно исследуются отображения, удовлетворяющие
верхним оценкам относительно модуля семейств кривых, которые a
priori предполагаются открытыми и дискретными. К сожалению, доказать
открытость и дискретность из таких оценок, не предполагая их
заранее, нам непосредственно не удаётся. Тем не менее, отображения,
удовлетворяющие в некотором смысле <<обратным>> неравенствам,
оказываются всегда открытыми и дискретными. Этот факт доказан в
статье и помещён в последний раздел работы.

\subsection{} Перейдём к определениям и формулировкам основных
результатов. Следующие понятия могут быть найдены, напр., в
\cite{Lee}--\cite{PSh}. Напомним, что {\it $n$-мерным топологическим
многообразием} ${\Bbb M}^n$ называется хаусдорфово топологическое
пространство со счётной базой, каждая точка которого имеет
окрестность, гомеоморфную некоторому открытому множеству в ${\Bbb
R}^n.$ {\it Картой} на многообразии ${\Bbb M}^n$ будем называть пару
$(U, \varphi),$ где $U$ --- открытое множество пространства ${\Bbb
M}^n$ и $\varphi$ --- соответствующий гомеоморфизм множества $U$ на
открытое множество в ${\Bbb R}^n.$ Если $p\in U$ и
$\varphi(p)=(x^1,\ldots,x^n)\in {\Bbb R}^n,$ то соответствующие
числа $x^1,\ldots,x^n$ называются {\it локальными координатами
точки} $p.$ {\it Гладким многообразием} называется само множество
${\Bbb M}^n$ вместе с соответствующим набором карт $(U_{\alpha},
\varphi_{\alpha}),$ так, что объединение всех $U_{\alpha}$ по
параметру $\alpha$ даёт всё ${\Bbb M}^n$ и, кроме того, отображение,
осуществляющее переход от одной системы локальных координат к
другой, принадлежит классу $C^{\infty}.$

\medskip
Напомним, что {\it римановой метрикой} на гладком многообразии
${\Bbb M}^n$ называется положительно определённое гладкое
симметричное тензорное поле типа $(0,2).$ В частности, компоненты
римановой метрики $g_{kl}$ в различных локальных координатах $(U,
x)$ и $(V, y)$ взаимосвязаны посредством тензорного закона
$$'g_{ij}(x)=g_{kl}(y(x))\frac{\partial y^k}{\partial x^i}
\frac{\partial y^l}{\partial x^j}\,.$$
{\it Римановым многообразием} будем называть гладкое многообразие
вместе с римановой метрикой на нём. Длину гладкой кривой
$\gamma=\gamma(t),$ $t\in [t_1, t_2],$ соединяющей точки
$\gamma(t_1)=M_1\in {\Bbb M}^n,$ $\gamma(t_2)=M_2\in {\Bbb M}^n$, и
$n$-мерный объём области $A$ на римановом многообразии определим
согласно соотношениям
\begin{equation}\label{eq1}
l(\gamma):=\int\limits_{t_1}^{t_2}\sqrt{g_{ij}(x(t))\frac{dx^i}{dt}\frac{dx^j}{dt}}\,dt\,,\quad
v(A)=\int\limits_{A}\sqrt{\det g_{ij}}\,dx^1\ldots dx^n\,.
\end{equation}
Ввиду положительной определённости тензора $g=g_{ij}(x)$ имеем:
$\det g_{ij}>0.$ {\it Геодезическим расстоянием} между точками $p_1$
и $p_2\in {\Bbb M}^n$ будем называть наименьшую длину всех
кусочно-гладких кривых в ${\Bbb M}^n,$ соединяющих точки $p_1$ и
$p_2.$ Геодезическое расстояние между точками $p_1$ и $p_2$ будем
обозначать символом $d(p_1, p_2)$ (всюду далее $d$ обозначает
геодезическое расстояние, если не оговорено противное). В частности,
{\it шаром $B(x_0, r)$ с центром в точке $x_0\in {\Bbb M}^n$ и
радиуса $r>0$} на римановом многообразии ${\Bbb M}^n$ мы будем
называть следующее множество:
$$B(x_0, r)=\left\{x\in{\Bbb M}^n\,|\,
d(x, x_0)<r\right\}\,.$$
Так как риманово многообразие, вообще говоря, не предполагается
связным, расстояние между любыми точками многообразия может быть не
определено. Хорошо известно, что любая точка $p$ риманова
многообразия ${\Bbb M}^n$ имеет окрестность $U\ni p$ (называемую
далее {\it нормальной окрестностью точки $p$}) и соответствующее
координатное отображение $\varphi\colon U\rightarrow {\Bbb R}^n,$
так, что геодезические сферы с центром в точке $p$ и радиуса $r,$
лежащие в окрестности $U,$ переходят при отображении $\varphi$ в
евклидовы сферы того же радиуса, а пучок геодезических кривых,
исходящих из точки $p,$ переходит в пучок радиальных отрезков в
${\Bbb R}^n$ (см.~\cite[леммы~5.9 и 6.11]{Lee}, см.\ также
комментарии на стр.~77 здесь же). Локальные координаты
$\varphi(p)=(x^1,\ldots, x^n)$ в этом случае называются {\it
нормальными координатами} точки $p.$ Стоит отметить, что в случае
связного многообразия ${\Bbb M}^n$ открытые множества метрического
пространства $({\Bbb M}^n, d)$ порождают топологию исходного
топологического пространства ${\Bbb M}^n$
(см.~\cite[лемма~6.2]{Lee}). Заметим, что в нормальных координатах
всегда тензорная матрица $g_{ij}(x)$ в точке $p$ --- единичная (а в
силу непрерывности $g$ в точках, близких к $p,$ эта матрица сколь
угодно близка к единичной; см.~\cite[пункт~(c)
предложения~5.11]{Lee}).

\subsection{}
Напомним определения классов Соболева и отображений с конечным
искажением, которые будут необходимы нам в дальнейшем (см.~
\cite{IM} и \cite{ARS}). %и \cite{Ma}).
Всюду далее в данной статье будут рассматриваться только непрерывные
отображения $f:D\rightarrow {\Bbb R}^n$ области $D\subset{\Bbb
R}^n,$ либо $f:D\rightarrow {\Bbb M}_*^n$ области $D\subset{\Bbb
M}^n,$ где ${\Bbb M}^n$ и ${\Bbb M}_*^n$ далее обозначают римановы
многообразия размерности $n$ с геодезическими расстояниями $d$ и
$d_*,$ соответственно, если только не оговорено противное.

Пусть $U$ --- открытое множество, $U\subset {\Bbb R}^n,$ $u\colon
U\rightarrow {\Bbb R}$ --- некоторая функция, $u\in
L_{loc}^{\,1}(U).$ Здесь и далее $dm(x)$ означает элемент объёма в
${\Bbb R}^n,$ соответствующий мере Лебега $m.$ Предположим, что
найдётся функция $v\in L_{loc}^{\,1}(U),$ такая что
$$\int\limits_U \frac{\partial \varphi}{\partial x_i}(x)u(x)dm(x)=
-\int\limits_U \varphi(x)v(x)dm(x)$$
для любой функции $\varphi\in C_1^{\,0}(U).$ Тогда будем говорить,
что функция $v$ является {\it обобщённой производной первого порядка
функции $u$ по переменной $x_i,$} и обозначать символом:
$\frac{\partial u}{\partial x_i}(x):=v.$

Функция $u\in W_{loc}^{1,1}(U),$ если $u$ имеет обобщённые
производные первого порядка по каждой из переменных в $U.$

Пусть $G$ --- открытое множество в ${\Bbb R}^n.$ Отображение
$f\colon G\rightarrow {\Bbb R}^n$ принадлежит {\it классу Соболева}
$W^{1,1}_{loc}(G),$ пишут $f\in W^{1,1}_{loc}(G),$ если все
координатные функции $f=(f_1,\ldots,f_n)$ обладают обобщёнными
частными производными первого порядка, которые локально интегрируемы
в $G$ в первой степени. Если дополнительно $\frac{\partial
f_i}{\partial x_j}\in L^p_{loc}(G),$ $p\geqslant 1,$
$i,j=1,2,\ldots,n,$ то говорят, что $f\in W_{loc}^{1, p}(G).$

\medskip
Пусть теперь область $D\subset {\Bbb M}^n,$ тогда будем говорить,
что отображение $f\colon D\rightarrow {\Bbb M}^n_*$ принадлежит
классу $f\in W^{1, 1}_{loc}(D),$ если каждая пара точек $p\in D$ и
$f(p)\in f(D)$ имеет окрестности $U\subset D,$ $V\subset f(D),$ в
которых $\psi\circ f\circ \varphi^{-1}\in
W^{1,1}_{loc}(\varphi(U)),$ где $\varphi\colon U\rightarrow {\Bbb
R}^n$ и $\psi\colon V\rightarrow {\Bbb R}^n$ --- соответствующие
карты, переводящие $U$ и $V$ в некоторые открытые подмножества
${\Bbb R}^n.$

\medskip
Пусть $D$ --- область в ${\Bbb R}^n,$ $n\geqslant 2.$ Полагаем в
точках дифференцируемости $x\in D$ отображения $f\colon D\rightarrow
{\Bbb R}^n$
$$\Vert f^{\,\prime}(x)\Vert\,=\,\,\,\max\limits_{h\in {\Bbb R}^n
\setminus \{0\}} \frac {|f^{\,\prime}(x)h|}{|h|},\quad J(x, f)={\rm
det}\, f^{\,\prime}(x)\,,$$
где, как обычно, $f^{\,\prime}(x)$ --- матрица Якоби отображения $f$
в точке $x.$ Отображение $f\colon D\rightarrow {\Bbb R}^n$ будем
называть {\it отображением с конечным искажением}, если $f\in
W_{loc}^{1,1}(D)$ и для некоторой функции $K(x)\colon  D\rightarrow
[1,\infty)$ выполнено условие
$$\Vert f^{\,\prime}\left(x\right) \Vert^{n}\leqslant K(x)\cdot
|J(x,f)|$$
при почти всех $x\in D$ (см.~\cite[п.~6.3, гл.~VI]{IM}).

\medskip
Пусть $X$ и $Y$ --- два топологических пространства. Отображение
$f\colon X\rightarrow Y$ будем называется {\it открытым}, если
$f(A)$ открыто в $Y$ для любого открытого $A\subset X,$ и {\it
дискретным}, если для каждого $y\in Y$ любые две различные точки
множества $f^{\,-1}(y)$ имеют попарно непересекающиеся окрестности.
Пусть теперь $D$ --- область на римановом многообразии ${\Bbb M}^n$
и $f\colon D\rightarrow {\Bbb M}^n_*$ --- некоторое открытое
дискретное отображение. Определим якобиан отображения в точке $x\in
D$ как
$$J(x, f)=\limsup\limits_{r\rightarrow 0}\frac{v_*(f(B(x, r)))}{v(B(x, r))}\,,$$
где $v$ и $v_*$ --- объём в ${\Bbb M}^n$ и ${\Bbb M}^n_*,$
соответственно. Заметим, что в нормальных координатах определённый
таким образом якобиан открытого дискретного отображения $f$ с
точностью до знака совпадает в почти всех точках дифференцируемости
с обычным якобианом (определителем якобиевой матрицы),
см.~\cite[теорема 2, разд.~V.3.2]{RR}. Полагаем
\begin{equation}\label{eq1H}
L(x, f)=\limsup\limits_{y\rightarrow x}\frac{d_*(f(x), f(y))}{d(x,
y)}\,,\qquad l(x, f)=\liminf\limits_{y\rightarrow x}\frac{d_*(f(x),
f(y))}{d(x, y)}\,,
\end{equation}
где $d$ и $d_*$ --- геодезические расстояния на ${\Bbb M}^n$ и
${\Bbb M}^n_*,$ соответственно. Предположим, что область $D\subset
{\Bbb M}^n,$ тогда отображение $f\colon D\rightarrow {\Bbb M}^n_*$
называется {\it отображением с конечным искажением,} если для почти
всех $x\in D$ и некоторой почти всюду конечной функции $K(x)<\infty$
выполнено неравенство
$$L^n(x, f)\leqslant K(x)\cdot J(x, f)\,.$$
Здесь <<почти всех>> понимается в соответствующей карте в ${\Bbb
R}^n.$

\medskip
Отметим, что для отображений с конечным искажением и произвольного
$p\geqslant 1$ корректно определена и почти всюду конечна так
называемая {\it внутренняя дилатация $K_{I, p}(x,f)$ отображения $f$
порядка $p$ в точке $x$}, определяемая равенствами
\begin{equation}\label{eq0.1.1A}
K_{I, p}(x,f)\quad =\quad\left\{
\begin{array}{rr}
\frac{J(x,f)}{l^p(x, f)}, & J(x,f)\ne 0,\\
1,  &  f^{\,\prime}(x)=0, \\
\infty, & \text{в\,\,остальных\,\,случаях}
\end{array}
\right.\,.
\end{equation}
Пусть $\varphi\colon [0,\infty)\rightarrow[0,\infty)$~---
неубывающая функция, $f\colon D\rightarrow {\Bbb M}^n_*$~---
отображение класса $W_{loc}^{1,1}.$ Будем говорить, что $f\in
W^{1,\varphi}_{loc},$ если в произвольных локальных координатах
\begin{equation}\label{eq1AAA}
\int\limits_{G}\varphi\left(|\nabla f(x)|\right)\,dv(x)<\infty
\end{equation}
для любой компактной подобласти $G\subset D,$ где
$$|\nabla
f(x)|=\sqrt{\sum\limits_{i=1}^n\sum\limits_{j=1}^n\left(\frac{\partial
f_i}{\partial x_j}\right)^2}\,.$$ Класс $W^{1,\varphi}_{loc}$
называется классом {\it Орлича--Соболева}.

\medskip
Нетрудно указать одно из простых условий на функцию $\varphi,$ для
которых из выполнения требования в \eqref{eq1AAA} в одной конкретной
карте $\psi$ будет следовать также его выполнение в любой другой
карте. Пусть $\varphi\colon [0,\infty)\rightarrow[0,\infty)$~---
неубывающая функция, такая что при некоторых постоянных $C>0,$ $T>0$
и всех $t\geqslant T$ выполнено неравенство вида
%
%\begin{equation}\label{eq1AAAA}
$$\varphi(2t)\leqslant C\cdot\varphi(t)\,.$$
%\end{equation}
%
Ввиду условия гладкости римановых многообразий, при помощи прямых
вычислений мы можем убедиться, что в этом случае определение класса
Ор\-ли\-ча--Со\-бо\-лева не зависит от карты.

\subsection{}\label{r1.5} Пусть $\left(X,\,d, \mu\right)$ --- произвольное метрическое
пространство с метрикой $d,$ наделённое мерой $\mu$ и $B(x_0,
r)=\{x\in X\,|\, d(x, x_0)<r\}.$ Следующее определение может быть
найдено, напр., в \cite[разд.~13.4]{MRSY}. Будем говорить, что
интегрируемая в $B(x_0, r)$ функция ${\varphi}\colon
D\rightarrow{\Bbb R}$ имеет {\it конечное среднее колебание} в точке
$x_0\in D$, пишем $\varphi\in FMO(x_0),$ если
%
%
%
%\begin{equation}\label{eq29*!}
%
$$\limsup\limits_{\varepsilon\rightarrow 0}\frac{1}{\mu(B(
x_0,\,\varepsilon))}\int\limits_{B(x_0,\,\varepsilon)}
|{\varphi}(x)-\overline{{\varphi}}_{\varepsilon}|\,
d\mu(x)<\infty\,,$$
%
%\end{equation}
%
где
$\overline{{\varphi}}_{\varepsilon}=\frac{1} {\mu(B(
x_0,\,\varepsilon))}\int\limits_{B( x_0,\,\varepsilon)}
{\varphi}(x)\, d\mu(x).$

\subsection{}
Пусть $D$ --- подмножество ${\Bbb M}^n.$ Для отображения $f\colon
D\,\rightarrow\,{\Bbb M}_*^n,$ множества $E\subset D$ и
$y\,\in\,{\Bbb M}_*^n$  определим {\it функцию кратности $N(y, f,
E)$} как число прообразов точки $y$ во множестве $E,$ т.е.
\begin{equation}\label{eq1.7A}
N(y, f, E)\,=\,{\rm card}\,\left\{x\in E\,|\, f(x)=y\right\}\,,
\qquad N(f, E)\,=\,\sup\limits_{y\in{\Bbb R}^n}\,N(y, f, E)\,.
\end{equation}

\subsection{} {\it Кривой} $\gamma$ мы называем непрерывное
отображение отрезка $[a,b]$ (открытого интервала $(a,b),$ либо
полуоткрытого интервала вида $[a,b)$ или $(a,b]$) в ${\Bbb M}^n,$
$\gamma\colon [a,b]\rightarrow {\Bbb M}^n.$ Под семейством кривых
$\Gamma$ подразумевается некоторый фиксированный набор кривых
$\gamma,$ а, если $f\colon{\Bbb M}^n\rightarrow {\Bbb M}_*^n$ ---
произвольное отображение, то
$f(\Gamma):=\left\{f\circ\gamma\,|\,\gamma\in\Gamma\right\}.$ Длину
произвольной кривой $\gamma\colon [a, b]\rightarrow {\Bbb M}^n,$
лежащей на многообразии ${\Bbb M}^n,$ можно определить как точную
верхнюю грань сумм $\sum\limits_{i=1}^{n-1} d(\gamma(t_i),
\gamma(t_{i+1}))$ по всевозможным разбиениям $a\leqslant
t_1\leqslant\ldots\leqslant t_n\leqslant b.$ Следующие определения в
случае пространства ${\Bbb R}^n$ могут быть найдены, напр., в
\cite[разд.~1--6, гл.~I]{Va}, см.\ также \cite[гл.~I]{Fu}. Борелева
функция $\rho\colon {\Bbb M}^n\,\rightarrow [0,\infty]$ называется
{\it допустимой} для семейства $\Gamma$ кривых $\gamma$ в ${\Bbb
M}^n,$ если линейный интеграл по натуральному параметру $s$ каждой
(локально спрямляемой) кривой $\gamma\in \Gamma$ от функции $\rho$
удовлетворяет условию $\int\limits_{\gamma}\rho
(\gamma(s))\,ds\geqslant 1.$ В этом случае мы пишем:
$\rho\in\mathrm{adm}\,\Gamma.$ Зафиксируем  $p\geqslant 1$ и будем
называть {\it $p$-мо\-ду\-лем} семейства кривых $\Gamma$ величину
$$M_p(\Gamma)=\inf\limits_{\rho\in\mathrm{adm}\,\Gamma}
\int\limits_D \rho ^p (x)\ \ dv(x)\,.$$
При этом, если $\mathrm{adm}\,\Gamma=\varnothing,$ то полагаем:
$M_p(\Gamma)=\infty$ (см.~\cite[разд.~6 на с.~16]{Va} либо
\cite[с.~176]{Fu}). Свойства $p$-модуля в некоторой мере аналогичны
свойствам меры Лебега $m$ в ${\Bbb R}^n.$ Именно, $p$-модуль пустого
семейства кривых равен нулю, $M_p(\varnothing)=0,$ обладает
свойством монотонности
относительно семейств кривых, %
\begin{equation}\label{eq48***}
\Gamma_1\subset\Gamma_2\Rightarrow M_p(\Gamma_1)\leqslant
M_p(\Gamma_2)\,,
\end{equation}
а также свойством полуаддитивности:
\begin{equation}\label{eq29*}
M_p\left(\bigcup\limits_{i=1}^{\infty}\Gamma_i\right)\leqslant
\sum\limits_{i=1}^{\infty}M_p(\Gamma_i)
\end{equation}
(см.~\cite[теорема~6.2, гл.~I]{Va} в ${\Bbb R}^n$ либо
\cite[теорема~1]{Fu} в случае более общих пространств с мерами).
Говорят, что семейство кривых $\Gamma_1$ \index{минорирование}{\it
минорируется} семейством $\Gamma_2,$ пишем $\Gamma_1\,>\,\Gamma_2,$
если для каждой кривой $\gamma\,\in\,\Gamma_1$ существует подкривая,
которая принадлежит семейству $\Gamma_2.$
В этом случае,
\begin{equation}\label{eq32*A}
\Gamma_1
> \Gamma_2 \quad \Rightarrow \quad M_p(\Gamma_1)\leqslant M_p(\Gamma_2)
\end{equation} (см.~\cite[теорема~6.4, гл.~I]{Va} либо
\cite[свойство~(c)]{Fu} в случае более общих пространств с мерами).

\subsection{}Как известно, в основу геометрического определения квазиконформных
отображений $f:D\rightarrow {\Bbb R}^n,$ $n\geqslant 2,$ может быть
положено неравенство
\begin{equation}\label{eq1*!}
M_n(f(\Gamma))\leqslant K\,M_n(\Gamma)\,,
\end{equation}
которое должно выполняться для произвольного семейства $\Gamma$
кривых $\gamma$ в области $D$ (см., напр.,  \cite[разд.~13,
гл.~2]{Va}). Следующее определение, приводимое ниже для римановых
многообразий и произвольного порядка модуля $p\geqslant 1,$  для
пространства ${\Bbb R}^n$ и конформного модуля может быть найдено,
напр., в работе \cite{SS}. Пусть $x_0\in D,$ $Q\colon D\rightarrow
[0,\infty]$ --- измеримая относительно меры $v$ функция, и число
$r_0>0$ таково, что шар $\overline{B(x_0, r_0)}$ лежит вместе со
своим замыканием в некоторой нормальной окрестности $U$ точки $x_0.$
Пусть также $0<r_1<r_2<r_0,$
\begin{equation}\label{eq2I}
A=A(x_0, r_1, r_2)=\{x\in {\Bbb R}^n\,|\,r_1<d(x, x_0)<r_2\},
\end{equation}
$S_i=S(x_0,r_i),$ $i=1,2,$ --- геодезические сферы с центром в точке
$x_0$ и радиусов $r_1$ и $r_2,$ соответственно, а
$\Gamma\left(S_1,\,S_2,\,A\right)$ обозначает семейство всех кривых,
соединяющих $S_1$ и $S_2$ внутри области $A.$
%
%\medskip
Отображение $f\colon D\rightarrow {\Bbb M}_*^n$ (либо $f\colon
D\setminus\{x_0\}\rightarrow {\Bbb M}_*^n$ в зависимости от
контекста) условимся называть {\it кольцевым $(p, Q)$-отображением в
точке $x_0\,\in\,D,$} если соотношение
\begin{equation}\label{eq3*!!}
 M_p\left(f\left(\Gamma\left(S_1,\,S_2,\,A\right)\right)\right)\ \leqslant
\int\limits_{A} Q(x)\cdot \eta^p(d(x, x_0))\ dv(x) \end{equation}
выполнено в кольце $A$ для произвольных $r_1,r_2,$ указанных выше, и
для каждой измеримой функции $\eta \colon  (r_1,r_2)\rightarrow
[0,\infty ]\,$ такой, что
\begin{equation}\label{eq*3!!}
\int\limits_{r_1}^{r_2}\eta(r)dr\geqslant 1\,. \end{equation}

Отметим, что неравенство (\ref{eq3*!!}) соответствует условию
(\ref{eq1*!}) при $p=n$ и $Q(x)\leqslant K,$ а при $Q\leqslant K$ и
произвольном $p$ -- неравенству
\begin{equation}\label{eq1**} M_p(f(\Gamma))\leqslant K\,M_p(\Gamma)\,,
\end{equation}
выполненному для произвольного семейства $\Gamma$ кривых $\gamma$ в
области $D.$ В частности, при дополнительном предположении, что $f$
в (\ref{eq1**}) является гомеоморфизмом и $p\in (n-1, n)$ известным
математиком Ф. Герингом установлено, что $f$ является {\it локально
квазиизометричным}, другими словами, при некоторой постоянной $C>0$
и всех $x_0\in D$ справедлива оценка
%
%\begin{equation}\label{eq13}
$$\limsup\limits_{x\rightarrow
x_0}\frac{|f(x)-f(x_0)|}{|x-x_0|}\leqslant C\,,$$
%\end{equation}
%
см., напр., \cite[теорема~2]{Ge}. При $p=n$ свойство
квазиизометричности указанных отображений, к сожалению,
утрачивается, как показывает простой пример отображения с
ограниченным искажением $f(x)=x|x|^{\alpha-1},$ $x\in {\Bbb
R}^n\setminus\{0\},$ $0<\alpha<1,$ $f(0):=0.$

\medskip
Поскольку целью настоящей статьи является изучение граничного
поведения широких классов отображений, нам важно было бы дать некий
аналог определения кольцевого $(p, Q)$-отображения в случае
неизолированной точки $x_0$ границы области $D,$ а не только в
случае внутренних и изолированных точек границы области.
\medskip{}
Пусть $p\geqslant 1,$ $D$ -- область в ${\Bbb M}^n,$ $Q:{\Bbb
M}^n\rightarrow [0, \infty]$ -- измеримая по Лебегу функция,
$Q(x)\equiv 0$ при всех $x\not\in D.$ Отображение $f:D\rightarrow
{\Bbb M}^n$ будем называть {\it кольцевым $(p,
Q)$-отоб\-ра\-же\-нием в точке $x_0\in
\partial D,$} если для некоторого $r_0=r(x_0)>0$ такого, что шар $B(x_0, r_0)$ лежит в некоторой
нормальной окрестности точки $x_0,$ произвольных <<сферического>>
кольца $A=A(x_0,r_1,r_2),$ центрированного в точке $x_0,$ радиусов:
$r_1,$ $r_2,$ $0<r_1<r_2< r_0=r(x_0)$ и любых континуумов
$E_1\subset \overline{B(x_0, r_1)}\cap D,$ $E_2\subset {\Bbb
M}^n\setminus B(x_0, r_2)\cap D$ отображение $f$ удовлетворяет
соотношению
\begin{equation}\label{eq3*!!A}
 M_p\left(f\left(\Gamma\left(E_1,\,E_2,\,D\right)\right)\right)\ \le
\int\limits_{A} Q(x)\cdot \eta^p(|x-x_0|)\ dv(x) \end{equation}
для каждой измеримой функции $\eta :(r_1,r_2)\rightarrow [0,\infty
],$ такой что имеет место соотношение (\ref{eq*3!!}). Отображения
типа кольцевых $(p, Q)$-отображений были предложены к изучению
О.~Мартио и изучались им совместно с В.~Рязановым, У.~Сребро и
Э.~Якубовым в $n$-мерном евклидовом пространстве, см.~\cite{MRSY}. В
случае этого пространства кольцевые $(p, Q)$-отображения являются
представлением аналитических функций на плоскости ($p=n=2$ и
$Q\equiv 1$), плоских и пространственных квазиконформных отображений
($p=n,$ $Q\leqslant K=\mathrm{const}$) и отображений с ограниченным
искажением по Решетняку ($p=n,$ $Q\leqslant K=\mathrm{const}$)
(см.~\cite[теорема~8.1, гл.~II]{Ri}). %\cite[теорема~1, лемма~6]{Pol}).
Как мы уже говорили выше, отображения с конечным искажением в смысле
Иванца-Шверака также являются кольцевыми $(n, Q)$-отображениями, где
$Q$ вычисляется явно (см. \cite{Sev$_4$}).

\subsection{} Теперь немного сведений из теории метрических пространств. Пусть $(X, d, \mu)$~--- метрическое пространство
с метрикой $d,$ наделённое локально конечной борелевской мерой
$\mu.$ Следуя~\cite[раздел 7.22]{He} будем говорить, что борелева
функция $\rho\colon  X\rightarrow [0, \infty]$ является {\it верхним
градиентом} функции $u\colon X\rightarrow {\Bbb R},$ если для всех
спрямляемых кривых $\gamma,$ соединяющих точки $x$ и $y\in X$
выполняется неравенство $|u(x)-u(y)|\leqslant
\int\limits_{\gamma}\rho\,ds,$ где, как обычно,
$\int\limits_{\gamma}\rho\,ds$ обозначает линейный интеграл от
функции $\rho$ по кривой $\gamma.$ Будем также говорить, что в
указанном пространстве $X$ выполняется $(1; p)$-неравенство
Пуанкаре, если найдутся постоянные $C\geqslant 1$ и $\tau>0$ такие,
что для каждого шара $B\subset X,$ произвольной локально
ограниченной непрерывной функции $u\colon X\rightarrow {\Bbb R}$ и
любого её верхнего градиента $\rho$ выполняется следующее
неравенство:
$$\frac{1}{\mu(B)}\int\limits_{B}|u-u_B|d\mu(x)\leqslant C\cdot({\rm diam\,}B)\left(\frac{1}{\mu(\tau B)}
\int\limits_{\tau B}\rho^p d\mu(x)\right)^{1/p}\,,$$
где $u_B:=\frac{1}{\mu(B)}\int\limits_{B}u d\mu(x),$ а $\tau B$
обозначает шар с центром в той же точке, что и $B,$ но радиуса в
$\tau$ раз больше радиуса шара $B.$ Метрическое пространство $(X, d,
\mu)$ назовём {\it $\widetilde{Q}$-ре\-гу\-ляр\-ным по Альфорсу} при
некотором $\widetilde{Q}\geqslant 1,$ если при каждом $x_0\in X$ и
некоторой постоянной $C\geqslant 1$
$$\frac{1}{C}R^{\widetilde{Q}}\leqslant \mu(B(x_0, R))\leqslant CR^{\widetilde{Q}}$$
(здесь, в частности, постоянная $C$ не зависит от $x_0$). Заметим,
что локально римановы многообразия являются $n$-регулярными по
Альфорсу (см.~\cite[лемма~5.1]{ARS}), где в качестве $\mu$ выступает
мера объёма $v$ на многообразии, определённая вторым соотношением в
(\ref{eq1}). Следует также заметить, что если риманово многообразие
$\widetilde{Q}$-регулярно по Альфорсу, то $\widetilde{Q}=n$ (см.\
рассуждения на с.~61 в~\cite{He} о совпадении $\widetilde{Q}$ с
хаусдорфовой размерностью пространства $X,$ а также
\cite[лемма~5.1]{ARS} о совпадении топологической и хаусдорфовых
размерностей областей риманового многообразия).

\subsection{}
Всюду ниже ${\Bbb M}^n$ и ${\Bbb M}^n_*$ -- римановы многообразия
размерности $n\geqslant 2,$ и $D$ -- область риманова многообразия
${\Bbb M}^n.$ Справедливы следующие утверждения.

\medskip
 \begin{theorem}\label{th4}
{\, Пусть $n\geqslant 2,$ $p\in [n-1, n],$ ${\Bbb M}_*^n$~---
связно\/{\em,} является $n$-ре\-гу\-ляр\-ным по Альфорсу\/{\em,}
кроме того\/{\em,} в ${\Bbb M}_*^n$ выполнено $(1;p)$-неравенство
Пуанкаре. Пусть также $D$~--- область в ${\Bbb M}^n,$ $B_R\subset
{\Bbb M}_*^n$~--- некоторый фиксированный шар радиуса $R,$ такой что
$\overline{B_R}$~--- компакт в ${\Bbb M}_*^n,$ $K\subset B_R$ --
некоторый континуум, $f\colon D\setminus\{x_0\}\rightarrow
B_R\setminus K$~--- открытое дискретное кольцевое $(p,
Q)$-отоб\-ра\-же\-ние в точке $x_0\in D.$ Предположим, $Q\in
FMO(x_0)$. Тогда $f$ имеет непрерывное продолжение $f\colon
D\rightarrow \overline{B_R} $ {\em(}непрерывность понимается в
смысле геодезического расстояния $d_*$ в ${\Bbb M}_*^n)$.
}
 \end{theorem}

\medskip Здесь и далее {\it предельным множеством
отображения $f$ относительно множества $E\subset {\Bbb M}^n$}
называется множество
$$C(f, E):=\left\{y\in {\Bbb M}_*^n: \,\exists\,x_0\in E:
y=\lim\limits_{m\rightarrow \infty} f(x_m), x_m\rightarrow
x_0\right\}\,.$$ Пределы в равенстве выше следует понимать в смысле
геодезических метрик $d$ и $d_*,$ соответственно.

Зафиксируем $p\geqslant 1.$ Будем говорить, что граница $\partial D$
области $D$ является {\it $p$-сильно достижимой в точке $x_0\in
\partial D$}, если для любой окрестности $U$ точки $x_0$ найдется
компакт $E\subset D,$ окрестность $V\subset U$ точки $x_0$ и число
$\delta
>0$ такие, что $M_p(\Gamma(E,F, D))\ge \delta$ для любого континуума  $F$ в $D,$
пересекающего $\partial U$ и $\partial V$  (см., напр., \cite[разд.
3.8]{MRSY}). Полагаем
\begin{equation}\label{eq32}
q_{x_0}(r):=\frac{1}{r^{n-1}}\int\limits_{S(x_0,
r)}Q(x)\,d\mathcal{A}\,.
\end{equation}

В качестве одного из наиболее важных результатов настоящей статьи,
приведём следующее утверждение.

\medskip
\begin{theorem}\label{theor4*!} {\,
Пусть $p\geqslant 1,$ $D$ -- область в ${\Bbb M}^n,$ $\overline{D}$
-- компакт в ${\Bbb M}^n,$ $f:D\rightarrow {\Bbb M}_n^*$ -- открытое
дискретное кольцевое $(p, Q)$-отоб\-ра\-же\-ние в точке $x_0\in
\partial D,$ $f(D)=D^{\,\prime},$ $\overline{D^{\,\prime}}$ -- компакт в ${\Bbb M}_*^n,$
область $D$ локально связна в точке $x_0,$ $C(f,
\partial D)\subset \partial D^{\,\prime}$ и $\partial D^{\,\prime}$
$p$-сильно достижима хотя бы в одной точке $y\in C(f, x_0).$
Предположим, что функция $Q$ имеет конечное среднее колебание в
точке $x_0,$ либо $0<q_{x_0}(t)<\infty$ при почти всех $t\in (0,
\varepsilon_0)$ и при некотором $\varepsilon_0>0,$
$\varepsilon_0<{\rm dist}(x_0,
\partial D),$ выполнено следующее условие расходимости интеграла:
\begin{equation}\label{eq9}
\int\limits_{0}^{\varepsilon_0}
\frac{dt}{t^{\frac{n-1}{p-1}}q_{x_0}^{\,\frac{1}{p-1}}(t)}=\infty\,.
\end{equation}
Тогда $C(f, x_0)=\{y\}.$}
\end{theorem}

\medskip
Отображения, удовлетворяющие указанному выше условию $C(f,
\partial D)\subset \partial D^{\,\prime},$ называются отображениями, {\it сохраняющими
границу} (см. \cite[разд. 3, гл. II]{Vu}).

\medskip{} {\it Элементом площади} гладкой поверхности $H$ на
римановом многообразии ${\Bbb M}^n$ будем называть выражение вида
$$d\mathcal{A}=\sqrt{{\rm det}\,g_{\alpha\beta}^*}\,du^1\ldots du^{n-1},$$
где $g_{\alpha\beta}^*$ --- риманова метрика на $H,$ порождённая
исходной римановой метрикой $g_{ij}$ согласно соотношению
\begin{equation}\label{eq5A}
g_{\alpha\beta}^*(u)=g_{ij}(x(u))\frac{\partial x^i}{\partial
u^{\alpha}} \frac{\partial x^j}{\partial u^{\beta}}.
\end{equation}
Здесь индексы $\alpha$ и $\beta$ меняются от $1$ до $n-1,$ а $x(u)$
обозначает параметризацию поверхности $H$ такую, что $\nabla_u x\ne
0.$ Ещё один важнейший результат настоящей статьи, заключающий в
себе приложение развитой техники к пространствам Орлича--Соболева,
может быть сформулирован следующим образом.

\medskip
\begin{theorem}\label{th1}
{\, Пусть $n\geqslant 3,$ $n-1<p\leqslant n,$ $D$ -- область в
${\Bbb M}^n,$  $\overline{D}$ -- компакт в ${\Bbb M}^n,$
многообразие ${\Bbb M}_*^n$ связно\/{\em,} является $n$-регулярным
по Альфорсу\/{\em,} кроме того\/{\em,} в ${\Bbb M}_*^n$ выполнено
$(1; \alpha)$-неравенство Пуанкаре, $\alpha=\frac{p}{p-n+1}.$ Пусть,
кроме того, $U^*$~--- некоторая область в ${\Bbb M}_*^n,$
гомеоморфная какой-либо ограниченной области в ${\Bbb R}^n$
посредством гомеоморфизма $\psi:U^*\rightarrow {\Bbb R}^n,$
являющегося картой на ${\Bbb M}_*^n,$ а $B_R$ -- некоторый шар в
${\Bbb M}_*^n$ такой, что $\overline{B_R}\subset U^*,$ и
$\overline{B_R}$ -- компакт в ${\Bbb M}_*^n,$
$\overline{B_R}\ne{\Bbb M}_*^n.$ Пусть также $N(f, D)<\infty,$
$x_0\in D,$ тогда каждое открытое, дискретное и сохраняющее границу
отображение $f:D\setminus\{x_0\}\rightarrow B_R$ класса $W_{loc}^{1,
\varphi}(D\setminus\{x_0\})$ с конечным искажением такое, что $C(f,
x_0)\cap C(f,
\partial D)=\varnothing,$ продолжается в точку $x_0$
непрерывным образом до отображения $f:D\rightarrow \overline{B_R},$
если
\begin{equation}\label{eqOS3.0a}
\int\limits_{1}^{\infty}\left(\frac{t}{\varphi(t)}\right)^
{\frac{1}{n-2}}dt<\infty
\end{equation}
и, кроме того, найдётся функция $Q:D\rightarrow (0, \infty),$ такая
что $K_{I,\alpha}(x, f)\leqslant Q(x)$ при почти всех $x\in D$ и при
некотором $\varepsilon_0>0,$ $\varepsilon_0<{\rm dist}(x_0,
\partial D),$
\begin{equation}\label{eq9DA}
\int\limits_{0}^{\varepsilon_0}
\frac{dt}{t^{\frac{n-1}{\alpha-1}}q_{x_0}^{\,\frac{1}{\alpha-1}}(t)}=\infty\,,
\end{equation}
$q_{x_0}(r):=\frac{1}{r^{n-1}}\int\limits_{|x-x_0|=r}Q(x)\,d{\mathcal
A}.$
В частности, заключение теоремы \ref{th1} является верным, если
$q_{x_0}(r)=\,O\left({\left( \log{\frac{1}{r}}\right)}^{n-1}\right)$
при $r\rightarrow 0.$}
\end{theorem}

\section{Изолированные особенности кольцевых отображений относительно
$p$-мо\-дуля} Хорошо известная из общего курса комплексного анализа
теорема об устранении изолированной особенности утверждает, что
ограниченная аналитическая функция
$\varphi:D\setminus\{z_0\}\rightarrow {\Bbb C}$ области
$D\setminus\{z_0\}\subset {\Bbb C}$ имеет предел в точке $z_0.$ %(см.
%\cite[теорема~1$^{\,\prime}$, $\S\,6,$ п.~24, гл.~II]{Sh}).

\medskip
Этот результат обобщён в $n$-мерном пространстве для отображений с
ограниченным и конечным искажением (см.
\cite[следствие~4.5]{MRV$_2$},
\cite[теорема~2.9, гл.~III]{Ri} и \cite[теорема~10]{Cr$_2$}, см. также \cite{Mikl$_1$}. %и \cite{Mikl$_2$}).
Несколько позднее были получены и другие обобщения, среди которых, в
частности, можно упомянуть результат, установленный одним из авторов
(см. \cite{SS}). В последних работах речь, в частности, идёт об
устранении изолированной особенности и аналогах теоремы
Сохоцкого--Вейерштрасса для так называемых кольцевых
$Q$-отображений, т.е., отображений, в основе определения которых
лежит свойство контролируемого искажения модуля семейств кривых.
Здесь функция $Q$ отвечает за <<испорченность>> искажения модуля и
предполагается, вообще говоря, неограниченной. В то же время,
содержательность результатов для указанных отображений имеет место,
как правило, в случае слабого роста функции $Q$ в окрестности
фиксированной изолированной точки, например, особенностей
логарифмического типа, функций конечного среднего колебания и т.п.
(см. там же). Стоит отметить, что аналитические функции отвечают
$Q\equiv 1,$ а отображения с ограниченным искажением -- $Q\equiv
\mathrm{const},$ а поэтому все перечисленные результаты содержат в
себе как следствие соответствующие утверждения, касающиеся
аналитических функций. Вообще, большая часть известных ныне классов
отображений также являются кольцевыми $Q$-отображениями при не очень
сильных ограничениях на характеристику квазиконформности, гладкость
отображений и меру их множества точек ветвления (см.
\cite{Sev$_4$}).

\medskip
Чтобы как-то подытожить упомянутые результаты, мы покажем в
настоящем разделе, что аналог теоремы об устранении изолированной
особенности для кольцевых $Q$-отображений справедлив также на
римановых многообразиях при пра\-ктически тех же ограничениях на
функцию $Q,$ а не только в евклидовом $n$-мерном пространстве. Здесь
также участвуют некоторые ограничения на сами многообразия, которые
будут оговорены ниже.

Всюду далее
$$S(x_0,r) = \{ x\,\in\,{\Bbb M}^n\,|\, d(x, x_0)=r\}\,,$$
\begin{equation}\label{eq49***}
A(x_0, r_1, r_2)=\{ x\,\in\,{\Bbb M}^n\,|\, r_1<d(x, x_0)<r_2\}\,,
\end{equation}
$d(A)$~--- геодезический диаметр множества $A\subset {\Bbb M}^n.$
Кроме того, всюду далее граница $\partial D$ области $D\subset {\Bbb
M}^n$ и замыкание $\overline{D}$ области $D$ понимаются в смысле
геодезического расстояния $d.$ Перед тем, как мы приступим к
изложению вспомогательных результатов и основной части данного
раздела, дадим ещё одно важное определение (см.~\cite[раздел~3,
гл.~II]{Ri}). Пусть $D$ --- область риманового многообразия ${\Bbb
M}^n,$ $n\geqslant 2,$ $f\colon D \rightarrow {\Bbb M}_*^n$ ---
отображение, $\beta\colon [a,\,b)\rightarrow {\Bbb M}_*^n$ ---
некоторая кривая и $x\in\,f^{\,-1}\left(\beta(a)\right).$ Кривая
$\alpha\colon [a,\,c)\rightarrow D,$ $a<c\leqslant b,$ называется
{\it максимальным поднятием} кривой $\beta$ при отображении $f$ с
началом в точке $x,$ если $(1)\quad \alpha(a)=x;$ $(2)\quad
f\circ\alpha=\beta|_{[a,\,c)};$ $(3)$\quad если
$c<c^{\prime}\leqslant b,$ то не существует кривой
$\alpha^{\prime}\colon [a,\,c^{\prime})\rightarrow D,$ такой что
$\alpha=\alpha^{\prime}|_{[a,\,c)}$ и $f\circ
\alpha=\beta|_{[a,\,c^{\prime})}.$ Имеет место следующее

\medskip
\begin{propos}\label{pr7}
{ Пусть ${\Bbb M}^n$ и ${\Bbb M}_*^n$ --- римановы
многообразия{\em,} $n\geqslant 2,$ $D$ --- область в ${\Bbb M}^n,$
$f\colon D\rightarrow {\Bbb M}^n_*$ --- открытое дискретное
отображение{\em,} $\beta\colon [a,\,b)\rightarrow {\Bbb M}_*^n$ ---
кривая и точка $x\in\,f^{-1}\left(\beta(a)\right).$ Тогда кривая
$\beta$ имеет максимальное поднятие при отображении $f$ с началом в
точке $x.$ }
\end{propos}

\medskip
\begin{proof} Зафиксируем точку $x_0\in {\Bbb M}^n$ и рассмотрим $f(x_0)\in {\Bbb
M}_*^n.$ Поскольку точка $f(x_0)$ принадлежит многообразию ${\Bbb
M}_*^n,$ найдётся окрестность $V$ этой точки, гомеоморфная множеству
$\psi(V)\subset {\Bbb R}^n.$ В силу непрерывности отображения $f,$
найдётся окрестность $U$ точки $x_0,$ такая что $f(U)\subset V.$ С
другой стороны, не ограничивая общности, можно считать, что $U$
гомеоморфна открытому множеству $\varphi(U)$ в ${\Bbb R}^n.$ Можно
также считать, что $\varphi(U)$ и $\psi(V)$ являются областями в
${\Bbb R}^n,$ тогда $f^*=\psi\circ f\circ\varphi^{\,-1}$ ---
открытое дискретное отображение между областями $\varphi(U)$ и
$\psi(V)$ в ${\Bbb R}^n.$ Для таких отображений существование
максимальных поднятий локально вытекает из соответствующего
результата Рикмана в $n$-мерном евклидовом пространстве
(см.~\cite[шаг~2 доказательства теоремы~3.2 гл.~II]{Ri}). Отсюда
вытекает локальное существование максимальных поднятий и на
многообразиях. Глобальное существование максимальных поднятий может
быть установлено аналогично доказательству шага 1 указанной выше
теоремы.~$\Box$
\end{proof}

\medskip{}
Пусть $A$ --- открытое подмножество многообразия ${\Bbb M}^n,$ а $C$
--- компактное подмножество $A.$  {\it Конденсатором} будем называть
пару множеств $E=\left(A,\,C\right).$ Пусть $p\geqslant 1,$ тогда
{\it $p$-ёмкостью} конденсатора $E$ называется следующая величина:
$${\rm cap}_p\,E=M_p(\Gamma_E),$$
$\Gamma_E$ обозначает семейство всех кривых вида $\gamma\colon
[a,\,b)\rightarrow A,$ таких что $\gamma(a)\in C$ и
$|\gamma|\cap\left(A\setminus F\right)\ne\varnothing$ для
произвольного компакта $F\subset A.$ Стоит отметить, что в
пространстве ${\Bbb R}^n$ указанное определение $p$-ёмкости
совпадает с ёмкостью, определённой при помощи точной нижней грани
интегралов от градиентов функций, см. \cite[предложение~10.2 и
замечание~10.8, гл.~II]{Ri}.

\medskip
Справедливо следующее утверждение (см.~\cite[предложение~4.7]{AS}).

\medskip
\begin{propos}\label{pr2}
{ Пусть $X$ --- $\widetilde{Q}$-регулярное по Альфорсу метрическое
пространство с мерой{\em,} в котором выполняется $(1;p)$-неравенство
Пуанкаре, $p\in [n-1, n],$ так{\em,} что $\widetilde{Q}-1\leqslant
n\leqslant \widetilde{Q}.$ Тогда для произвольных континуумов $E$ и
$F,$ содержащихся в шаре $B(x_0, R),$ и некоторой постоянной $C>0$
выполняется неравенство
$$M_p(\Gamma(E, F, X))\geqslant \frac{1}{C}\cdot\frac{\min\{{\rm diam}\,E, {\rm diam}\,F\}}{R^{1+p-\widetilde{Q}}}\,.$$ }
\end{propos}

\medskip
Имеет место следующее утверждение.

\medskip
 \begin{lemma}\label{lem3.1!}
{\, Пусть $n\geqslant 2,$ $p\geqslant 1,$ $D$~--- область в ${\Bbb
M}^n,$ $f\colon D\setminus\{x_0\}\rightarrow{\Bbb M}_*^n$~---
кольцевое $(p, Q)$-отоб\-ра\-же\-ние в точке $x_0\in D.$
Предположим\/{\em,} что найдётся $\varepsilon_0>0$ и измеримая по
Лебегу функция $\psi(t)\colon(0, \varepsilon_0)\rightarrow
[0,\infty]$ со следующим свойством. Для любого $\varepsilon_2\in(0,
\varepsilon_0]$ найдётся $\varepsilon_1\in (0, \varepsilon_2],$
такое что
\begin{equation} \label{eq5}
0<I(\varepsilon,
\varepsilon_2):=\int\limits_{\varepsilon}^{\varepsilon _2}\psi(t)dt
< \infty
\end{equation}
при всех $\varepsilon\in (0,\varepsilon_1)$ и\/{\em,} кроме
того\/{\em,} при $\varepsilon\rightarrow 0$
\begin{equation} \label{eq4*}
\int\limits_{\varepsilon<d(x,
x_0)<\varepsilon_0}Q(x)\cdot\psi^p(d(x, x_0)) \
dv(x)\,=\,o\left(I^p(\varepsilon, \varepsilon_0)\right)\,.
\end{equation}

Если $\Gamma$~--- семейство всех кривых
$\gamma(t)\colon(0,1)\rightarrow D\setminus\{x_0\}$ таких\/{\em,}
что $\gamma(t_k)\rightarrow x_0$ при некоторой последовательности
$t_k\rightarrow 0,$ $\gamma(t)\not\equiv x_0,$ то
$M_p\left(f(\Gamma)\right)=0.$}
 \end{lemma}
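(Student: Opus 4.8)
The plan is to reduce the assertion, via the minorization property \eqref{eq32*A} together with the countable subadditivity \eqref{eq29*} of the modulus, to families of curves that cross a \emph{fixed} spherical ring centered at $x_0$, and then to estimate the modulus of the image of such a family directly from the ring $(p,Q)$-condition \eqref{eq3*!!}, using an admissible function manufactured from $\psi$.

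First I would decompose $\Gamma$ according to how far its curves wander from $x_0$. Every $\gamma\in\Gamma$ satisfies $\gamma(t_k)\to x_0$ along some $t_k\to 0$, so the continuous function $t\mapsto d(\gamma(t),x_0)$ has infimum $0$; since $\gamma\not\equiv x_0$ its supremum is positive. Fixing a sequence $\rho_m\downarrow 0$ with $\rho_m<\min\{\varepsilon_0,r_0\}$, where $r_0$ is the radius from the definition of a ring $(p,Q)$-mapping at $x_0$, I set $\Lambda_m=\{\gamma\in\Gamma:\sup_t d(\gamma(t),x_0)>\rho_m\}$. The $\Lambda_m$ increase to $\Gamma$, so by \eqref{eq29*} it suffices to prove $M_p(f(\Lambda_m))=0$ for each fixed $m$. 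For such $m$ and any $\varepsilon\in(0,\rho_m)$, a curve $\gamma\in\Lambda_m$ attains $d(\gamma(t),x_0)<\varepsilon$ for some $t$ and $d(\gamma(t),x_0)>\rho_m$ for some $t$; by the intermediate value theorem it contains a subcurve $\alpha$ joining $S(x_0,\varepsilon)$ to $S(x_0,\rho_m)$ inside the ring $A=A(x_0,\varepsilon,\rho_m)$. Then $f\circ\alpha$ is a subcurve of $f\circ\gamma$ lying in $f(\Gamma(S(x_0,\varepsilon),S(x_0,\rho_m),A))$, so $f(\Lambda_m)>f(\Gamma(S(x_0,\varepsilon),S(x_0,\rho_m),A))$, and \eqref{eq32*A} yields
$$M_p(f(\Lambda_m))\le M_p\left(f\left(\Gamma\left(S(x_0,\varepsilon),S(x_0,\rho_m),A\right)\right)\right).$$

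Next I would feed a test function into the ring inequality. By \eqref{eq5} applied with $\varepsilon_2=\rho_m$ there is $\varepsilon_1$ with $0<I(\varepsilon,\rho_m)<\infty$ for all small $\varepsilon$, so $\eta(t):=\psi(t)/I(\varepsilon,\rho_m)$ on $(\varepsilon,\rho_m)$ satisfies $\int_{\varepsilon}^{\rho_m}\eta(t)\,dt=1$ and is admissible in the sense of \eqref{eq*3!!}. Substituting it into \eqref{eq3*!!} gives
$$M_p\left(f\left(\Gamma\left(S(x_0,\varepsilon),S(x_0,\rho_m),A\right)\right)\right)\le\frac{1}{I^p(\varepsilon,\rho_m)}\int\limits_{\varepsilon<d(x,x_0)<\rho_m}Q(x)\,\psi^p(d(x,x_0))\,dv(x).$$
Bounding the last integral by the one over $\{\varepsilon<d(x,x_0)<\varepsilon_0\}$ and invoking \eqref{eq4*}, the right-hand side is $o(I^p(\varepsilon,\varepsilon_0))/I^p(\varepsilon,\rho_m)$ as $\varepsilon\to 0$.

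Finally I would let $\varepsilon\to 0$, writing $I(\varepsilon,\varepsilon_0)=I(\varepsilon,\rho_m)+I(\rho_m,\varepsilon_0)$ with the second term a finite constant. If $I(\varepsilon,\rho_m)\to\infty$, then $I(\varepsilon,\varepsilon_0)\sim I(\varepsilon,\rho_m)$ and the quotient tends to $0$; if $I(\varepsilon,\rho_m)$ stays bounded, then $I(\varepsilon,\varepsilon_0)$ is bounded, so $o(I^p(\varepsilon,\varepsilon_0))=o(1)\to 0$ while the denominator tends to a positive limit, again giving $0$. In either case $M_p(f(\Lambda_m))=0$, and \eqref{eq29*} produces $M_p(f(\Gamma))\le\sum_m M_p(f(\Lambda_m))=0$. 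The main obstacle I anticipate is exactly the outer radius: an individual curve need not reach any prescribed sphere about $x_0$, which is why the decomposition into the $\Lambda_m$, with a curve-independent outer radius $\rho_m$, is indispensable; the accompanying point to watch is that the little-$o$ quotient must be shown to vanish \emph{without} assuming $I(\varepsilon,\rho_m)\to\infty$, which is handled by the two-case analysis above.
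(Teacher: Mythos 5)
Your proposal is correct and follows essentially the same route as the paper's proof: the paper likewise decomposes $\Gamma$ according to an outer radius tending to $0$ (its families $\Gamma_i$ of subcurves reaching $S(x_0,r_i)$ play the role of your $\Lambda_m$), minorizes by $\Gamma\left(S(x_0,\varepsilon),S(x_0,r_i),A(x_0,\varepsilon,r_i)\right)$, inserts $\eta=\psi/I(\varepsilon,r_i)$ into \eqref{eq3*!!}, and concludes via \eqref{eq32*A} and \eqref{eq29*}. The only difference is that your two-case analysis of the quotient $o\left(I^p(\varepsilon,\varepsilon_0)\right)/I^p(\varepsilon,\rho_m)$ makes explicit the convergence ${\frak F}_i(\varepsilon)\rightarrow 0$, which the paper merely asserts as a consequence of \eqref{eq4*}.
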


\medskip
В частности, условие~\eqref{eq5} автоматически выполнено, как только
функция $\psi\in L^1_{loc}(0, \varepsilon_0)$ удовлетворяет условию:
$\psi(t)>0$ при почти всех $t\in (0, \varepsilon_0).$

\medskip
 \begin{proof}
Можно считать, что шар $B(x_0, \varepsilon_0)$ лежит в нормальной
окрестности точки $x_0.$ Заметим, что
\begin{equation}\label{eq12*}
\Gamma > \bigcup\limits_{i=1}^\infty\,\, \Gamma_i\,,
\end{equation}
где $\Gamma_i$~--- семейство кривых
$\alpha_i(t)\colon(0,1)\rightarrow {{\Bbb M}^n}$ таких, что
$\alpha_i(1)\in S(x_0, r_i)\},$ где $r_i$ --- некоторая
последовательность, такая что $r_i\rightarrow 0$ при $i\rightarrow
\infty$ и $\alpha_i(t_k)\rightarrow x_0$ при $k\rightarrow\infty$
для той же самой последовательности $t_k\rightarrow 0$ при
$k\rightarrow\infty.$ Зафиксируем $i\geqslant 1.$ По
соотношению~\eqref{eq5} леммы найдётся $\varepsilon_1\in (0, r_i]$
такое, что $I(\varepsilon, r_i)>0$ при всех $\varepsilon\in(0,
\varepsilon_1).$
Заметим, что при указанных $\varepsilon>0$ функция
$$\eta(t)=\left\{
\begin{array}{rr}
\psi(t)/I(\varepsilon, r_i), &   t\in (\varepsilon,
r_i),\\
0,  &  t\in {\Bbb R}\setminus (\varepsilon, r_i)
\end{array}
\right. $$ удовлетворяет условию нормировки вида~\eqref{eq*3!!} в
кольце
$A(x_0, \varepsilon, r_i)=\{x\in {\Bbb R}^n\,|\,\varepsilon<d(x,
x_0)< r_i \}$ и, следовательно, ввиду соотношения~\eqref{eq3*!!}
(поскольку $f$ является кольцевым $(p, Q)$-отображением в точке
$x_0$)
 \begin{multline}\label{eq11*}
M_p\left(f\left(\Gamma\left(S(x_0, \varepsilon),\,S(x_0,
r_i),\,A(x_0, \varepsilon, r_i)\right)\right)\right)\leqslant\\
\leqslant \int\limits_{A(x_0, \varepsilon, r_i)} Q(x)\cdot
\eta^p(d(x, x_0))\ dv(x)\,\leqslant {\frak F}_i(\varepsilon),
 \end{multline}
где
${\frak F}_i(\varepsilon)=\,\frac{1}{\left(I(\varepsilon,
r_i)\right)^p}\int\limits_{\varepsilon<d(x,
x_0)<\varepsilon_0}\,Q(x)\,\psi^{p}(d(x, x_0))\,dv(x).$

Принимая во внимание~\eqref{eq4*}, получим, что ${\frak
F}_i(\varepsilon)\rightarrow 0$ при $\varepsilon\rightarrow 0.$
Заметим, что при каждом $\varepsilon\in (0, \varepsilon_1)$
\begin{equation}\label{eq5*C}
\Gamma_i>\Gamma\left(S(x_0, \varepsilon),\,S(x_0, r_i),\,A(x_0,
\varepsilon, r_i)\right)\,.
\end{equation}
Таким образом, при каждом фиксированном $i=1,2,\ldots$
из~\eqref{eq11*} и~\eqref{eq5*C} получаем, что
\begin{equation}\label{eq6*}
M_p(f(\Gamma_i))\leqslant {\frak F}_i(\varepsilon)\rightarrow 0
\end{equation}
при $\varepsilon\rightarrow 0$ и каждом фиксированном $i\in {\Bbb
N}.$ Однако, левая часть неравенства~\eqref{eq6*} не зависит от
$\varepsilon$ и, следовательно, $M_p(f(\Gamma_i))=0.$ Наконец,
из~\eqref{eq12*} и свойства полуаддитивности
$p$-модуля~\eqref{eq29*} вытекает, что $M_p(f(\Gamma))=0.$~$\Box$
\end{proof}

\medskip
Основным техническим утверждением, позволяющим получать результаты
об устранимых особенностях открытых дискретных кольцевых $(p,
Q)$-отоб\-ра\-же\-ний в наиболее общей ситуации, является следующая
лемма.

\medskip
\begin{lemma}\label{lem4*}{\,
Пусть $n\geqslant 2,$ $n-1\leqslant p\leqslant n,$ ${\Bbb
M}_*^n$~--- связно\/{\em,} является $n$-ре\-гу\-ляр\-ным по
Альфорсу\/{\em,} кроме того\/{\em,} в ${\Bbb M}_*^n$ выполнено
$(1;p)$-не\-ра\-вен\-с\-т\-во Пуанкаре. Пусть также $D$~--- область
в ${\Bbb M}^n,$ $B_R\subset {\Bbb M}_*^n$~--- некоторый
фиксированный шар радиуса $R,$ такой что $\overline{B_R}$~---
компакт в ${\Bbb M}_*^n,$ $f\colon D\setminus\{x_0\}\rightarrow
B_R\setminus K$~--- открытое дискретное кольцевое $(p,
Q)$-отоб\-ра\-же\-ние в точке $x_0\in D$ ($K\subset B_R$ --
некоторый фиксированный континуум). Предположим\/{\em,} что
существует $\varepsilon_0>0$ и измеримая по Лебегу функция
$\psi\colon(0, \varepsilon_0)\rightarrow [0, \infty]$ со следующим
свойством\/{\em:} для любого $\varepsilon_2\in (0, \varepsilon_0]$
найдётся $\varepsilon_1\in (0, \varepsilon_2],$ такое что при всех
$\varepsilon\in(0, \varepsilon_1)$ выполнено соотношение~\eqref{eq5}
и\/{\em,} кроме того\/{\em,} при $\varepsilon\rightarrow 0$
выполнено соотношение~\eqref{eq4*}. Тогда $f$ имеет непрерывное
продолжение $f\colon D\rightarrow \overline{B_R} $
{\em(}непрерывность понимается в смысле геодезического расстояния
$d_*$ в ${\Bbb M}_*^n)$.
}
 \end{lemma}

\medskip
 \begin{proof}
Можно считать, что $\overline{B(x_0, \varepsilon_0)}$ лежит в
нормальной окре\-стности $U$ точки $x_0.$ В частности,
$\overline{B(x_0, \varepsilon_0)}$~--- компакт в $U.$ Предположим
противное, а именно, что отображение $f$ не может быть продолжено по
непрерывности в точку $x_0.$ Поскольку $\overline{B_R}$~--- компакт,
то предельное множество $C(f, x_0)$ непусто. Тогда найдутся две
последовательности $x_j$ и $x_j^{\,\prime},$ принадлежащие $B(x_0,
\varepsilon_0)\setminus\left\{x_0\right\},$ $x_j\rightarrow
x_0,\quad x_j^{\,\prime}\rightarrow x_0,$ такие, что
$d_*\left(f(x_j),\,f(x_j^{\,\prime})\right)\geqslant a>0$ для всех
$j\in {\Bbb N}.$ Не ограничивая общности рассуждений, можно считать,
что $x_j$ и $x_j^{\,\prime}$ лежат внутри шара $B(x_0,
\varepsilon_0).$ Полагаем $r_j=\max{\left\{d(x_j,
x_0),\,d(x_j^{\,\prime}, x_0)\right\}}.$ Заметим, что при достаточно
малых $r_j$ множество $\overline{B(x_0,
r_j)}\setminus\left\{x_0\right\}$ является связным ввиду определения
риманова многообразия ${\Bbb M}^n.$ В таком случае, соединим точки
$x_j$ и $x_j^{\,\prime}$ замкнутой кривой, лежащей в
$\overline{B(x_0, r_j)}\setminus\left\{x_0\right\}.$ Обозначим эту
кривую символом $C_j$ и рассмотрим конденсатор $E_j=\left(B(x_0,
\varepsilon_0)\setminus\left\{x_0\right\}\,,C_j\right).$ В силу
открытости и непрерывности отображения $f,$ пара $f(E_j)$ также
является конденсатором. Рассмотрим семейства кривых $\Gamma_{E_j}$ и
$\Gamma_{f(E_j)},$ соответствующие определению $p$-ёмкости. Пусть
$\Gamma_j^{\,*}$
--- семейство всех максимальных под\-ня\-тий семейства кривых
$\Gamma_{f(E_j)}$ при отображении $f$ с началом в $C_j,$ лежащих в
$B(x_0, \varepsilon_0)\setminus\left\{x_0\right\}.$ Заметим, что
$\Gamma_j^{\,*}\subset \Gamma_{E_j}.$ Поскольку
$\Gamma_{f(E_j)}>f(\Gamma_j^{\,*}),$ мы получим:
\begin{equation}\label{eq32*!}
M_p\left(\Gamma_{f(E_j)}\right)\leqslant
M_p\left(f(\Gamma_j^{*})\right)\leqslant
M_p\left(f(\Gamma_{E_j})\right)\,.
\end{equation}
Заметим, что семейство $\Gamma_{E_j}$ может быть разбито на два
подсемейства:
\begin{equation}\label{eq33*!}
\Gamma_{E_j}\,=\,\Gamma_{E_{j_1}}\cup \Gamma_{E_{j_2}}\,,
\end{equation}
где $\Gamma_{E_{j_1}}$~--- семейство всех кривых
$\alpha(t)\colon[a,\,c)\rightarrow B(x_0,
\varepsilon_0)\setminus\left\{x_0\right\}$ с началом в $C_j,$ таких
что найдётся $t_k\in [a,\,c):$ $\alpha(t_k)\rightarrow x_0$ при
$t_k\rightarrow c-0;$ $\Gamma_{E_{j_2}}$~--- семейство всех кривых
$\alpha(t)\colon[a,\,c)\rightarrow B(x_0,
\varepsilon_0)\setminus\left\{x_0\right\}$ с началом в $C_j,$ таких
что найдётся $t_k\in [a,\,c):$ ${\rm dist}\left(\alpha(t_k),\partial
B(x_0, \varepsilon_0)\right)\rightarrow 0$ при $t_k\rightarrow c-0.$

В силу соотношений~\eqref{eq32*!} и~\eqref{eq33*!},
\begin{equation}\label{eq34*!}
M_p\left(\Gamma_{f(E_j)}\right)\leqslant
M_p(f(\Gamma_{E_{j_1}}))\,+\,M_p(f(\Gamma_{E_{j_2}}))\,.
\end{equation}
Заметим, что $M_p(f(\Gamma_{E_{j_1}}))=0$ ввиду леммы~\ref{lem3.1!}.
Кроме того, заметим, что при достаточно больших $m\in {\Bbb N},$
$\Gamma_{E_{j_2}}>\Gamma(S(x_0, r_j), S(x_0,
\varepsilon_0-\frac{1}{m}), A(x_0, r_j,
\varepsilon_0-\frac{1}{m})).$ Рассмотрим теперь кольцо
$A_{j}=\{x\in {\Bbb M}^n\,|\, r_j<d(x, x_0)<
\varepsilon_0-\frac{1}{m}\}$ и семейство функций
$$ \eta_{j}(t)= \left\{
\begin{array}{rr}
\psi(t)/I(r_j, \varepsilon_0-\frac{1}{m}), &   t\in (r_j,\, \varepsilon_0-\frac{1}{m}),\\
0,  &  t\in {\Bbb R} \setminus (r_j,\, \varepsilon_0-\frac{1}{m}).
\end{array}
\right.$$
Имеем:
$\int\limits_{r_j}^{\varepsilon_0-\frac{1}{m}}\,\eta_{j}(t) dt
=\,\frac{1}{I\left(r_j,
\varepsilon_0-\frac{1}{m}\right)}\int\limits_{r_j}
^{\varepsilon_0-\frac{1}{m}}\,\psi(t)dt=1. $ Таким образом, по
определению кольцевого $(p, Q)$-отображения в точке $x_0$ и
условию~\eqref{eq34*!}, мы получаем, что
$$M_p(f(\Gamma_{E_{j}}))\leqslant\,\frac{1}{{I(r_j,
\varepsilon_0-\frac{1}{m})}^p}\int\limits_{r_j<d(x,
x_0)<\varepsilon_0}\,Q(x)\,\psi^{p}(d(x, x_0))\,dv(x)\,,
$$
откуда, переходя к пределу при $m\rightarrow\infty,$ получим
соотношение
$$
M_p(f(\Gamma_{E_{j}}))\leqslant\, {\mathcal
S}(r_j):=\frac{1}{{I(r_j, \varepsilon_0)}^p}\int\limits_{r_j<d(x,
x_0)<\varepsilon_0}\,Q(x)\,\psi^{p}(d(x, x_0))\,dv(x).
$$
%\end{equation}
%
В силу условия~\eqref{eq4*},  ${\mathcal S}(r_j)\,\rightarrow\, 0$
при $j\rightarrow \infty.$ Тогда ввиду~\eqref{eq32*!} получаем, что
\begin{equation}\label{eq3D}
M_p\left(\Gamma_{f(E_j)}\right)\rightarrow 0\,,\qquad
j\rightarrow\infty\,.
\end{equation}
С другой стороны, рассмотрим семейство кривых $\Gamma_{f(E_j)}$ для
конденсатора $f(E_j)$ в терминах определения $p$-ёмкости. Заметим,
что подсемейство неспрямляемых кривых семейства $\Gamma_{f(E_j)}$
имеет нулевой модуль, и что оставшееся подсемейство, состоящее из
всех спрямляемых кривых семейства $\Gamma_{f(E_j)},$ состоит из
кривых $\beta\colon [a, b)\rightarrow f(D\setminus\{x_0\}),$ имеющих
предел при $t\rightarrow b$ (здесь учтено, что $\overline{B_R}$ ---
компакт в ${\Bbb M}_*^n$). Заметим, что указанный предел принадлежит
множеству $\partial f(A),$ где $A:=B(x_0,
\varepsilon_0)\setminus\left\{x_0\right\}.$ Из сказанного следует,
что
\begin{equation}\label{eq1D}
M_p(\Gamma_{f(E_j)})=M_p(\Gamma(f(C_j), \partial f(A), f(A)))\,.
\end{equation}
Поскольку многообразие ${\Bbb M}_*^n$ связно, семейство кривых
$\Gamma(K, f(C_j), {\Bbb M}_*^n)$ непусто. Кроме того, ввиду
\cite[теорема 1, $\S\,46,$ п.~I]{Ku} произвольная кривая $\gamma,$
соединяющая $K$ и $f(C_j)$ в ${\Bbb M}_*^n,$ имеет подкривую,
соединяющую $\partial f(A)$ и $f(C_j)$ в $f(A).$ Иными словами,
$\Gamma(K, f(C_j), {\Bbb M}_*^n)> \Gamma(\partial f(A), f(C_j),
f(A)).$

Ввиду предложения \ref{pr2} и того, что
$d_*\left(f(x_j),\,f(x_j^{\,\prime})\right)\geqslant a>0$ для всех
$j\in {\Bbb N}$ ввиду предположения, мы получим:
\begin{multline}\label{eq2D}
M_p(\Gamma(f(C_j), \partial f(A), f(A)))\geqslant M_p(\Gamma(f(C_j),
\partial
f(A), {\Bbb M}_*^n))\geqslant\\
\geqslant M_p(\Gamma(f(C_j), K, {\Bbb M}_*^n))\geqslant
\frac{1}{C}\cdot\frac{\min\{{\rm diam}\,f(C_j), {\rm
diam}\,K\}}{R^{1+p-n}}\geqslant\delta>0\,.
 \end{multline}
Однако, неравенства~\eqref{eq1D} и~\eqref{eq2D}
противоречат~\eqref{eq3D}. Полученное противоречие опровергает
предположение, что $f$ не имеет предела при $x\rightarrow x_0$ в
${\Bbb M}_*^n.$~$\Box$
 \end{proof}

\medskip
\subsection{} {\it Доказательство
теоремы~{\em\ref{th4}}} сводится к утверждению леммы \ref{lem4*}.
Выберем в этой лемме $0\,<\,\psi(t)\,=\,\frac
{1}{\left(t\,\log{\frac1t}\right)^{n/p}}.$ На основании
\cite[предложение~3]{Af$_1$} для указанной функции
будем иметь, что %
$$\int\limits_{\varepsilon<d(x, x_0)<\varepsilon_0}
Q(x)\cdot\psi^p(d(x, x_0))
 \ dv(x)\,= \int\limits_{\varepsilon<d(x, x_0)< {\varepsilon_0}}\frac{Q(x)\,
dv(x)} {\left(d(x, x_0) \log \frac{1}{d(x, x_0)}\right)^n} = $$
$$=O
\left(\log\log \frac{1}{\varepsilon}\right)$$
%\end{equation}
%
при  $\varepsilon \rightarrow 0.$
Заметим также, что при указанных выше $\varepsilon$ выполнено
$\psi(t)\geqslant \frac {1}{t\,\log{\frac1t}},$ поэтому
$I(\varepsilon,
\varepsilon_0)\,:=\,\int\limits_{\varepsilon}^{\varepsilon_0}\psi(t)\,dt\,\geqslant
\log{\frac{\log{\frac{1}
{\varepsilon}}}{\log{\frac{1}{\varepsilon_0}}}}.$ Тогда соотношение
(\ref{eq4*}) выполнено при $p=1.$ Таким образом, все условия леммы
\ref{lem4*} выполнены и, значит, необходимое заключение вытекает из
этой леммы.~$\Box$

\medskip
\begin{theorem}\label{th1A} {\,
Пусть $n\geqslant 2,$ $p\in [n-1, n],$ ${\Bbb M}_*^n$~---
связно\/{\em,} является $n$-ре\-гу\-ляр\-ным по Альфорсу\/{\em,}
кроме того\/{\em,} в ${\Bbb M}_*^n$ выполнено $(1;p)$-неравенство
Пуанкаре. Пусть также $D$~--- область в ${\Bbb M}^n,$ $B_R\subset
{\Bbb M}_*^n$~--- некоторый фиксированный шар радиуса $R,$ такой что
$\overline{B_R}$~--- компакт в ${\Bbb M}_*^n,$ $K\subset {\Bbb
M}_*^n$ -- некоторый континуум, $f\colon
D\setminus\{x_0\}\rightarrow B_R\setminus K$~--- открытое дискретное
кольцевое $(p, Q)$-отоб\-ра\-же\-ние в точке $x_0\in D.$ Если при
некотором $\delta(x_0)>0$ и всех достаточно малых $\varepsilon>0$
выполнено $\int\limits_{\varepsilon}^{\delta(x_0)}\
\frac{dr}{r^{\frac{n-1}{p-1}}q_{x_0}^{\frac{1}{p-1}}(r)}<\infty,$
кроме того,
\begin{equation}\label{eq3}
\int\limits_{0}^{\delta(x_0)}\
\frac{dr}{r^{\frac{n-1}{p-1}}q_{x_0}^{\frac{1}{p-1}}(r)}=\infty\,,
\end{equation}
где $q_{x_0}(r):=\frac{1}{r^{n-1}}\int\limits_{S(x_0,
r)}Q(x)\,d\mathcal{A},$ то $f$ имеет непрерывное продолжение
$f\colon D\rightarrow \overline{B_R} $ {\em(}непрерывность
понимается в смысле геодезического расстояния $d_*$ в ${\Bbb
M}_*^n)$. }
\end{theorem}

\begin{proof} Достаточно показать, что условие~\eqref{eq3} влечёт
выполнение условия~\eqref{eq4*} леммы~\ref{lem3.1!}. Можно считать,
что $B(x_0, \delta(x_0))$ лежит в нормальной окрестности точки
$x_0.$ Рассмотрим функцию
\begin{equation}\label{eq1E} \psi(t)= \left \{\begin{array}{rr}
1/[t^{\frac{n-1}{p-1}}q_{x_0}^{\frac{1}{p-1}}(t)]\ , & \ t\in
(r_1,r_2)\ ,
\\ 0\ ,  &  \ t\notin (r_1,r_2)\ .
\end{array} \right.
\end{equation}
Заметим теперь, что требование вида~\eqref{eq5} выполняется при
$\varepsilon_0=\delta(x_0)$ и всех достаточно малых $\varepsilon.$
Далее установим неравенство
\begin{equation}\label{eq4}
\int\limits_{\varepsilon<d(x, x_0)<\delta(x_0)} Q(x)\psi^p(d(x,
x_0))\,dv(x)\leqslant C\cdot\int\limits_{\varepsilon}^{\delta(x_0)}
\frac{dt}{t^{\frac{n-1}{p-1}}q_{x_0}^{\frac{1}{p-1}}(t)}
\end{equation}
при некоторой постоянной $C>0.$ Для этого покажем, что к левой части
соотношения \eqref{eq4} применим аналог теоремы Фубини. Рассмотрим в
окрестности точки $x_0\in S(z_0, r)\subset {\Bbb R}^n$ локальную
систему координат $z^1,\ldots, z^n,$ $n-1$ базисных векторов которой
взаимно ортогональны и лежат в плоскости, касательной к сфере в
точке $x_0,$ а последний базисный вектор перпендикулярен этой
плоскости. Пусть $r, \theta^1,\ldots, \theta^{n-1}$ сферические
координаты точки $x=x(\theta)$ в ${\Bbb R}^n.$ Заметим, что $n-1$
приращений переменных $z^1,\ldots, z^{n-1}$ вдоль сферы при
фиксированном $r$ равны $dz^1=rd\theta^1,\dots,
dz^{n-1}=rd\theta^{n-1},$ а приращение переменной $z^n$ по $r$ равно
$dz^n=dr.$ В таком случае,
$$dv(x)=\sqrt{{\rm det\,}g_{ij}(x)}r^{n-1}\,dr d\theta^1\dots
d\theta^{n-1}.$$
Рассмотрим параметризацию сферы $S(0, r)$ $x=x(\theta),$
$\theta=(\theta^1,\ldots,\theta^{n-1}),$ $\theta_i\in (-\pi, \pi].$
Заметим, что $\frac{\partial x^{\alpha}}{\partial \theta^{\beta}}=1$
при $\alpha=\beta$ и $\frac{\partial x^{\alpha}}{\partial
\theta^{\beta}}=0$ при $\alpha\ne \beta,$ $\alpha,\beta=1,\ldots,
n-1.$ Тогда в обозначениях соотношения~\eqref{eq5A} имеем:
$g_{\alpha\beta}^*(\theta)=g_{\alpha\beta}(x(\theta))r^2,$
$$d\mathcal{A}=\sqrt{\det\, g_{\alpha\beta}(x(\theta))}r^{n-1}d{\theta}^1\ldots d{\theta}^{n-1}.$$
Заметим, что
$$\frac{1}{r^{\frac{n-1}{p-1}}q_{x_0}^{\frac{1}{p-1}}(r)}=$$
\begin{equation}\label{eq6}
=\int\limits_{S(x_0,
r)}Q(x)\psi^p(d(x,x_0))\,d\mathcal{A}=\psi^p(r)r^{n-1}\cdot\int\limits_{\Pi}\sqrt{{\rm
det\,}g_{\alpha\beta}(x(\theta))}Q(x(\theta))\,d\theta^{1}\dots
d\theta^{n-1},
 \end{equation}
где $\Pi=(-\pi, \pi]^{n-1}$ --- прямоугольная область изменения
параметров $\theta^1,\ldots,\theta^{n-1}.$ Напомним, что в
нормальной системе координат геодезические сферы переходят в обычные
сферы того же радиуса с центром в нуле, а пучок геодезических,
исходящих из точки многообразия, переходит в пучок радиальных
отрезков в ${\Bbb R}^n$ (см.~\cite[леммы~5.9 и 6.11]{Lee}), так что
кольцу $\{x\in {\Bbb M}^n: \varepsilon<d(x, x_0)<\delta(x_0)\}$
соответствует та часть ${\Bbb R}^n,$ в которой $r\in (\varepsilon,
\delta(x_0)).$ Согласно сказанному выше, применяя в (\ref{eq6})
классическую теорему Фубини (см., напр.,~\cite[разд.~8.1,
гл.~III]{Sa}), мы получим что
 \begin{multline}\label{eq7}
\int\limits_{\varepsilon<d(x, x_0)<\delta(x_0)} Q(x)\psi^p(d(x,
x_0))\,dv(x)=\\
=\int\limits_{\varepsilon}^{\delta(x_0)}\int\limits_{\Pi} \sqrt{{\rm
det\,}g_{ij}(x)}Q(x)\psi^p(r)r^{n-1}\,d\theta^1\dots
d\theta^{n-1}dr.
 \end{multline}
Поскольку в нормальных координатах тензорная матрица $g_{ij}$  сколь
угодно близка к единичной в окрестности данной точки, то
$$C_2\det\,g_{\alpha\beta}(x)\leqslant\det\,g_{ij}(x)\leqslant
C_1\det\,g_{\alpha\beta}(x)\,.$$ Учитывая сказанное и сравнивая
\eqref{eq6} и \eqref{eq7}, приходим к соотношению~\eqref{eq4}. Но
тогда также
$$\int\limits_{\varepsilon<d(x, x_0)<\delta(x_0)}
Q(x)\psi^n(d(x, x_0))dv(x)=o(I^p(\varepsilon, \delta(x_0)))$$
ввиду соотношения~\eqref{eq3}. Утверждение теоремы следует теперь из
леммы~\ref{lem4*}.~$\Box$
 \end{proof}

 \medskip
 \begin{remark}\label{rem1B}
В ходе доказательства теоремы~\ref{th1} мы доказали следующий аналог
теоремы Фубини на римановых многообразиях: существует нормальная
окрестность $U$ точки $x_0\in {\Bbb M}^n$ и постоянные
$C_1,\,C_2>0,$ такие что для любых $\varepsilon_1, \varepsilon_2,$
удовлетворяющих ограничениям
$0\leqslant\varepsilon_1<\varepsilon_2<\mathrm{dist}\,(x_0,
\partial U),$ выполнены следующие неравенства:
$$C_1\cdot\int\limits_{\varepsilon_1<d(x,
x_0)<\varepsilon_2}Q(x)dv(x)\leqslant\int\limits_{\varepsilon_1}^{\varepsilon_2}\int\limits_{S(x_0,
r)} Q(x)d\mathcal{A}dr\leqslant
C_2\cdot\int\limits_{\varepsilon_1<d(x,
x_0)<\varepsilon_2}Q(x)dv(x)\,.$$
Более того, из теоремы Фубини, применённой к обычному пространству
${\Bbb R}^n,$  вытекает, что функция $\sqrt{{\rm
det\,}g_{ij}(x)}\,Q(x)$ измерима относительно меры площади
$d\mathcal{A}$ для почти всех евклидовых сфер $\widetilde{S}(x_0,
r)$ (см., напр., \cite[теорема~8.1, гл.~III]{Sa}). Однако, тогда
функция $Q(x)$ также измерима на почти всех сферах $S(x_0, r),$
поскольку, как показывают приведённые выше рассуждения, измеримость
функции $Q$ на этих сферах эквивалентна измеримости функции
$\sqrt{{\rm det\,}g_{\alpha\beta}(x)}\,Q(x)$ на евклидовых сферах
$\widetilde{S}(x_0, r).$ С другой стороны,
 $$
\sqrt{{\rm det\,}g_{\alpha\beta}(x)}\,Q(x)=\frac{\sqrt{{\rm
det\,}g_{ij}(x)}}{\sqrt{{\rm det\,}g_{ij}(x)}}\cdot\sqrt{{\rm
det\,}g_{\alpha\beta}(x)}\,Q(x)=G_1(x)\cdot G_2(x),
 $$
где $G_1(x):=\frac{\sqrt{{\rm det\,}g_{\alpha\beta}(x)}}{\sqrt{{\rm
det\,}g_{ij}(x)}}$ и $G_2(x):=\sqrt{{\rm det\,}g_{ij}(x)}\,Q(x)$~---
измеримые функции относительно меры площади $d\mathcal{A}$ для почти
всех $r.$ Следовательно, $Q(x)$ измерима на почти всех сферах
$S(x_0, r)$ как произведение двух измеримых функций.
\end{remark}\medskip

\medskip
Следующее утверждение предоставляет полезное условие на функцию $Q,$
при котором в случае $n-1\ne p\ne n$ справедливы основные результаты
настоящего раздела.

\medskip
\begin{theorem}\label{th4A}
{\, Пусть $n\geqslant 2,$ $p\in (n-1, n),$ ${\Bbb M}_*^n$~---
связно\/{\em,} является $n$-ре\-гу\-ляр\-ным по Альфорсу\/{\em,}
кроме того\/{\em,} в ${\Bbb M}_*^n$ выполнено $(1;p)$-неравенство
Пуанкаре. Пусть также $D$~--- область в ${\Bbb M}^n,$ $B_R\subset
{\Bbb M}_*^n$~--- некоторый фиксированный шар радиуса $R,$ такой что
$\overline{B_R}$~--- компакт в ${\Bbb M}_*^n,$ $K\subset {\Bbb
M}_*^n$ -- некоторый континуум, $f\colon
D\setminus\{x_0\}\rightarrow B_R\setminus K$~--- открытое дискретное
кольцевое $(p, Q)$-отоб\-ра\-же\-ние в точке $x_0\in D.$ Если $Q\in
L_{loc}^s({\Bbb R}^n)$ при некотором $s\geqslant\frac{n}{n-p},$ то
$f$ имеет непрерывное продолжение $f\colon D\rightarrow
\overline{B_R} $ {\em(}непрерывность понимается в смысле
геодезического расстояния $d_*$ в ${\Bbb M}_*^n)$.}
\end{theorem}

\medskip
\begin{proof}
Зафиксируем произвольным образом $0<\varepsilon_0<\infty,$ так,
чтобы шар $G:=B(x_0, \varepsilon_0)$ лежал вместе со своим
замыканием в некоторой нормальной окрестности точки $x_0,$ и положим
$\psi(t):=1/t.$ Заметим, что указанная функция $\psi$ удовлетворяет
неравенствам $0< I(\varepsilon,
\varepsilon_0):=\int\limits_{\varepsilon}^{\varepsilon_0}\psi(t)dt <
\infty.$ Покажем также, что в этом случае выполнено соотношение
\begin{equation} \label{eq4!}
\int\limits_{A(x_0,\varepsilon,
\varepsilon_0)}Q(x)\cdot\psi^{p}(d(x, x_0)) \
dv(x)\,=\,o\left(I^{p}(\varepsilon, \varepsilon_0)\right)\,.
\end{equation}
Применяя неравенство Гёльдера, будем иметь
$$\int\limits_{\varepsilon<d(x,
x_0)<\varepsilon_0}\frac{Q(x)}{d^{p}(x, x_0)} \ dv(x)\leqslant$$
\begin{equation}\label{eq13D}
\leqslant \left(\int\limits_{\varepsilon<d(x,
x_0)<\varepsilon_0}\frac{1}{d^{pq}(x, x_0)} \ dv(x)
\right)^{\frac{1}{q}}\,\left(\int\limits_{G} Q^{q^{\prime}}(x)\
dv(x)\right)^{\frac{1}{q^{\prime}}}\,,
\end{equation}
где  $1/q+1/q^{\prime}=1$. Заметим, что первый интеграл в правой
части неравенства (\ref{eq13D}) с точностью до некоторой постоянной
может быть подсчитан непосредственно. Действительно, пусть для
начала $q^{\prime}=\frac{n}{n-p}$ (и, следовательно,
$q=\frac{n}{p}.$) Ввиду аналога теоремы Фубини (см. замечание
\ref{rem1B}) будем иметь:
$$
\int\limits_{\varepsilon<d(x, x_0)<\varepsilon_0}\frac{1}{d^{p q}(x,
x_0)} \ dv(x)\leqslant C\cdot
\int\limits_{\varepsilon}^{\varepsilon_0}
\frac{dt}{t}=C\cdot\log\frac{\varepsilon_0}{\varepsilon}\,.
$$
В обозначениях леммы \ref{lem3.1!} мы будем иметь, что при
$\varepsilon\rightarrow 0$
$$
\frac{1}{I^{p}(\varepsilon,
\varepsilon_0)}\int\limits_{\varepsilon<d(x,
x_0)<\varepsilon_0}\frac{Q(x)}{d^{p}(x, x_0)} \ dv(x)\leqslant
C^{\frac{p}{n}}\Vert
Q\Vert_{L^{\frac{n}{n-p}}(G)}\left(\log\frac{\varepsilon_0}{\varepsilon}\right)
^{-p+\frac{p}{n}}\,\rightarrow 0\,,
$$
что влечёт выполнение соотношения (\ref{eq4!}).

\medskip
Пусть теперь $q^{\prime}>\frac{n}{n-p}$ (т.е.,
$q=\frac{q^{\prime}}{q^{\prime}-1}$). В этом случае
$$
\int\limits_{\varepsilon<d(x, x_0)<\varepsilon_0}\frac{1}{d^{pq}(x,
x_0)} \ dv(x)\leqslant C\int\limits_{\varepsilon}^{\varepsilon_0}
t^{n-\frac{p q^{\prime}}{q^{\prime}-1}-1}dt\leqslant
C\int\limits_{0}^{\varepsilon_0} t^{n-\frac{p
q^{\prime}}{q^{\prime}-1}-1}dt=$$
$$
=\frac{C}{n-\frac{p q^{\prime}}{q^{\prime}-1}}\varepsilon^{n-\frac{p
q^{\prime}}{q^{\prime}-1}}_0,
$$
и, значит,
$$
\frac{1}{I^{p}(\varepsilon, \varepsilon_0)}
\int\limits_{\varepsilon<d(x,
x_0)<\varepsilon_0}\frac{Q(x)}{d^{p}(x, x_0)} \ dv(x)\leqslant \Vert
Q\Vert_{L^{q^{\prime}}(G)}\left(\frac{C}{n-\frac{p
q^{\prime}}{q^{\prime}-1}} \varepsilon^{n-\frac{p
q^{\prime}}{q^{\prime}-1}}_0\right)^{\frac{1}{q}}\left(\log\frac{\varepsilon_0}{\varepsilon}\right)
^{-p}\,,
$$
что также влечёт выполнение соотношения (\ref{eq4!}).
Заметим, что оба соотношения (\ref{eq5})--(\ref{eq4*}) выполняются,
так что желанное заключение вытекает из леммы \ref{lem4*}.~$\Box$
\end{proof}

\section{Граничное поведение кольцевых отображений относительно
$p$-мо\-дуля}

В работе Р. Някки \cite{Na} сформулирован и доказан ряд утверждений
о продолжении квазиконформных отображений на границы областей.
Несколько позднее этот результат был перенесён М. Вуориненом на
отображения с ограниченным искажением (см. теорему~4.2 и следствие
4.3 в \cite{Vu}; см. также \cite[теорема~4.2]{Sr}). Также эти
результаты были обобщены на отображения с неограниченной
характеристикой -- кольцевые $Q$-отображения, см. \cite{MRSY} и
\cite{IR$_2$}. %и \cite{RSa}.
В частности, следующая лемма доказывалась В.И. Рязановым и Р.Р.
Салимовым (см. \cite[глава~13]{MRSY}) для случая гомеоморфизмов в
метрических пространствах, и представляет собой основной результат
настоящего раздела в наиболее общей ситуации.

\medskip
\begin{lemma}\label{lem1} {\, Пусть $p\geqslant 1,$ $n\geqslant 2,$ $\overline{D}$ -- компакт
в $\overline{D}$ -- компакт в ${\Bbb M}^n,$ $D$ --- область в ${\Bbb
M}^n,$ $f:D\rightarrow {\Bbb M}_*^n$ -- открытое дискретное
кольцевое $(p, Q)$-отоб\-ра\-же\-ние в точке $b\in
\partial D,$ $f(D)=D^{\,\prime},$ $\overline{D^{\,\prime}}$ -- компакт в ${\Bbb M}_*^n,$
область $D$ локально связна в точке $b,$ $C(f,
\partial D)\subset \partial D^{\,\prime}$ и область $D^{\,\prime}$
является $p$-сильно достижимой хотя бы в одной точке $y\in C(f, b).$
Предположим, что найдётся $\varepsilon_0>0$ и некоторая
положительная измеримая функция $\psi(t),$ $\psi:(0,
\varepsilon_0)\rightarrow (0,\infty),$ такая что для всех
$\varepsilon\in(0, \varepsilon_0)$
\begin{equation}\label{eq7***}
0<I(\varepsilon,
\varepsilon_0)=\int\limits_{\varepsilon}^{\varepsilon_0}\psi(t)dt <
\infty
\end{equation}
и при $\varepsilon\rightarrow 0$
\begin{equation}\label{eq5***}
\int\limits_{A(b, \varepsilon, \varepsilon_0)}
Q(x)\cdot\psi^{\,p}(d(x, x_0))
 \ dv(x) =o(I^{p}(\varepsilon, \varepsilon_0))\,,
\end{equation}
где $A:=A(b, \varepsilon, \varepsilon_0)$ определено в
(\ref{eq49***}). Тогда $C(f, b)=\{y\}.$ }
\end{lemma}

\medskip
\begin{proof}
Предположим противное. Поскольку $\overline{D^{\,\prime}}$ --
компакт в ${\Bbb M}_*^n,$ найдутся, по крайней мере, две
последовательности $x_i,$ $x_i^{\,\prime}\in D,$ $i=1,2,\ldots,$
такие, что $x_i\rightarrow b,$ $x_i^{\,\prime}\rightarrow b$ при
$i\rightarrow \infty,$ $f(x_i)\rightarrow y,$
$f(x_i^{\,\prime})\rightarrow y^{\,\prime}$ при $i\rightarrow
\infty$ и $y^{\,\prime}\ne y.$ Отметим, что $y$ и $y^{\,\prime}\in
\partial D^{\,\prime},$ поскольку по условию $C(f,
\partial D)\subset \partial D^{\,\prime}.$ По определению $p$-сильно достижимой границы в
точке $y\in \partial D^{\,\prime},$ для любой окрестности $U$ этой
точки найдутся компакт $C_0^{\,\prime}\subset D^{\,\prime},$
окрестность $V$ точки $y,$ $V\subset U,$ и число $\delta>0$ такие,
что
\begin{equation}\label{eq1.1}
M_p(\Gamma(C_0^{\,\prime}, F, D^{\,\prime}))\ge \delta
>0
\end{equation} для произвольного континуума
$F,$ пересекающего $\partial U$ и $\partial V.$ В силу предположения
$C(f,
\partial D)\subset \partial D^{\,\prime},$ имеем, что для
$C_0:=f^{\,-1}(C_0^{\,\prime})$ выполнено условие $C_0\cap \partial
D=\varnothing.$ Тогда, не ограничивая общности рассуждений, можно
считать, что $C_0\cap\overline{B(b, \varepsilon_0)}=\varnothing.$
Поскольку область $D$ локально связна в точке $b,$ можно соединить
точки $x_i$ и $x_i^{\,\prime}$ кривой $\gamma_i,$ лежащей в
$\overline{B(b, 2^{\,-i})}\cap D.$ Поскольку $f(x_i)\in V$ и
$f(x_i^{\,\prime})\in D\setminus \overline{U}$ при всех достаточно
больших $i\in {\Bbb N},$ найдётся номер $i_0\in {\Bbb N},$ такой,
что согласно (\ref{eq1.1})
\begin{equation}\label{eq2}
M_p(\Gamma(C_0^{\,\prime}, f(\gamma_i), D^{\,\prime}))\ge \delta
>0
\end{equation}
при всех $i\ge i_0\in {\Bbb N}.$ Обозначим через $\Gamma_i$
семейство всех полуоткрытых кривых $\beta_i:[a, b)\rightarrow {\Bbb
R}^n$ таких, что $\beta_i(a)\in f(\gamma_i),$ $\beta_i(t)\in
D^{\,\prime}$ при всех $t\in [a, b)$ и, кроме того,
$B_i:=\lim\limits_{t\rightarrow b-0}\beta_i(t)\in C_0^{\,\prime}.$
Очевидно, что
\begin{equation}\label{eq4.1}
M_p(\Gamma_i)=M_p\left(\Gamma\left(C_0^{\,\prime}, f(\gamma_i),
D^{\,\prime}\right)\right)\,.
\end{equation}
При каждом фиксированном $i\in {\Bbb N},$ $i\ge i_0,$ рассмотрим
семейство $\Gamma_i^{\,\prime}$ максимальных поднятий
$\alpha_i(t):[a, c)\rightarrow D$ семейства $\Gamma_i$ с началом во
множестве $\gamma_i.$ Такое семейство существует и определено
корректно ввиду предложения \ref{pr7}. Заметим, прежде всего, что
никакая кривая $\alpha_i(t)\in \Gamma_i^{\,\prime},$ $\gamma_i:[a,
c)\rightarrow D,$ не может стремиться к границе области $D$ при
$t\rightarrow c-0$ ввиду условия $C(f,
\partial D)\subset \partial D^{\,\prime}.$ Тогда $C(\alpha_i(t), c)\subset
D.$ Заметим также, что $C(\alpha_i(t), c)\ne \varnothing,$ поскольку
множество $\overline{D}$ компактно по предположению леммы.
Предположим теперь, что кривая $\alpha_i(t)$ не имеет предела при
$t\rightarrow c-0.$ Покажем, что предельное множество
$C(\alpha_i(t), c)$ есть континуум в $D.$ Действительно,
по условию Кантора в компакте $\overline{\alpha},$ см. \cite[\S\,
41(I), гл. 4, с. 8--9]{Ku},
$$
G=\bigcap\limits_{k\,=\,1}^{\infty}\,\overline{\alpha\left(\left[t_k,\,c\right)\right)}=
\limsup\limits_{k\rightarrow\infty}\alpha\left(\left[t_k,\,c\right)\right)=
\liminf\limits_{k\rightarrow\infty}\alpha\left(\left[t_k,\,c\right)\right)\ne\varnothing
$$
в виду монотонности последовательности  связных множеств
$\alpha([t_k,\,c))$ и, таким образом, $G$ является связным как
пересечение счётного числа убывающих континуумов
$\overline{\alpha([t_k,\,c))}$ по \cite[теорема 5, \S\,47(II)]{Ku}.

Таким образом, $C(\alpha_i(t), c)$ -- континуум в $D.$ Тогда в силу
непрерывности отображения $f,$ получаем, что $f\equiv const$ на
$C(\alpha_i(t), c),$ что противоречит предположению о дискретности
$f.$

Следовательно, $\exists \lim\limits_{t\rightarrow
c-0}\alpha_i(t)=A_i\in D.$ Отметим, что, в этом случае, по
определению максимального поднятия, $c=b.$ Тогда, с одной стороны,
$\lim\limits_{t\rightarrow b-0}\alpha_i(t):=A_i,$ а с другой, в силу
непрерывности отображения $f$ в $D,$
$$f(A_i)=\lim\limits_{t\rightarrow b-0}f(\alpha_i(t))=\lim\limits_{t\rightarrow b-0}
\beta_i(t)=B_i\in C_0^{\,\prime}\,.$$ Отсюда, по определению $C_0,$
следует, что $A_i\in C_0.$ Погрузим компакт $C_0$ в некоторый
континуум $C_1,$ всё ещё полностью лежащий в области $D,$ см. лемму
1 в \cite{Af$_1$}. За счёт уменьшения $\varepsilon_0>0,$ можно снова
считать, что $C_1\cap\overline{B(b, \varepsilon_0)}=\varnothing.$
Заметим, что функция
$$\eta(t)=\left\{
\begin{array}{rr}
\psi(t)/I(2^{-i}, \varepsilon_0), &   t\in (2^{-i},
\varepsilon_0),\\
0,  &  t\in {\Bbb R}\setminus (2^{-i}, \varepsilon_0)\,,
\end{array}
\right. $$ где $I(\varepsilon,
\varepsilon_0):=\int\limits_{\varepsilon}^{\varepsilon_0}\psi(t)dt,$
удовлетворяет условию нормировки вида (\ref{eq*3!!}) при
$r_1:=2^{-i},$ $r_2:=\varepsilon_0,$ поэтому, в силу определения
кольцевого $(p, Q)$-отоб\-ра\-же\-ния в граничной точке, а также
ввиду условий (\ref{eq7***})--(\ref{eq5***}),
\begin{equation}\label{eq11.1}
M_p\left(f\left(\Gamma_i^{\,\prime}\right)\right)\le \Delta(i)\,,
\end{equation}
где $\Delta(i)\rightarrow 0$ при $i\rightarrow \infty.$ Однако,
$\Gamma_i=f(\Gamma_i^{\,\prime}),$ поэтому из (\ref{eq11.1})
получим, что при $i\rightarrow \infty$
\begin{equation}\label{eq3.1}
M_p(\Gamma_i)= M_p\left(f(\Gamma_i^{\,\prime})\right)\le
\Delta(i)\rightarrow 0\,.
\end{equation}
Однако, соотношение (\ref{eq3.1}) вместе с равенством (\ref{eq4.1})
противоречат неравенству (\ref{eq2}), что и доказывает
лемму.~\end{proof}$\Box$

\subsection{}
{\it Доказательство теоремы~{\em\ref{theor4*!}}} вытекает из
леммы~{\em\ref{lem1}} на основании рассуждений по поводу выбора
функции $\psi,$ изложенных в предыдущем разделе при доказательстве
теорем \ref{th4} и \ref{th1A}.~$\Box$

\medskip
\begin{corollary}\label{cor6}
Заключение теоремы~{\em\ref{theor4*!}} имеет место, если в условиях
этой теоремы, вместо предположений на функцию $Q,$ потребовать,
чтобы $Q\in L_{loc}^s({\Bbb R}^n)$ при некотором
$s\geqslant\frac{n}{n-p},$ где $n-1\ne p\ne n.$
\end{corollary}

\medskip
{\it Доказательство} следствия \ref{cor6} проводится аналогично
доказательству теоремы \ref{th4A}.~$\Box$

\section{Устранение изолированных особенностей классов
Орлича--Со\-бо\-ле\-ва}

В настоящем разделе мы покажем возможность устранения изолированной
особенности отображения класса Орлича--Соболева между римановыми
многообразиями при следующих условиях:

\medskip
1) отображение $f$ открыто и дискретно;

2) отображение $f$ сохраняет границу (образ границы при отображении
не должен лежать внутри соответствующей области);

3) отображённая при $f$ область ограничена;

4) предельные множества на границе области и в данной изолированной
особенности не пересекаются.

\medskip
В случае пространства ${\Bbb R}^n,$ когда заданное отображение $f$
является гомеоморфизмом, данная задача была положительно решена в
виде комбинации двух результатов: \cite[теорема~5]{KRSS} и
\cite[теорема~9.3]{MRSY}. Стоит отметить, что произвольные
гомеоморфизмы удовлетворяют требованиям сохранения границы,
дискретности и открытости, а также указанному ограничению на
предельные множества. Легко также указать примеры негомеоморфных
сохраняющих границу открытых дискретных отображений, для которых
$C(f, x_0)\cap C(f,
\partial D)=\varnothing.$ В качестве примера рассмотрим пространство ${\Bbb R}^n,$
в котором зададим так называемое <<закручивание вокруг оси>>: в
цилиндрических координатах рассмотрим отображение вида
$f_m(x)=(r\cos m\varphi, r\sin m\varphi, x_3,\ldots, x_n),$
$x=(x_1,\ldots, x_n)\in {\Bbb B}^n,$ $r=|z|,$ $\varphi=\arg z,$
$z=x_1+ix_2,$ $m\in {\Bbb N}.$ (Здесь $x_0=0$). Не лишним будет
отметить, что в произвольной меньшей области указанное отображение
$f_m$ при некотором $m$ уже не замкнуто. Другой простой пример
негомеоморфного сохраняющего границу открытого дискретного
отображения, для которого ограничение $C(f, x_0)\cap C(f,
\partial D)=\varnothing$ выполняется, может быть дан в виде $f(z)=z^n,$ $z\in
{\Bbb B}^2\subset {\Bbb C},$ где $x_0:=0.$

\medskip
Доказательство основного результата статьи опирается на некоторый
аппарат, суть которого излагается ниже (см., напр., \cite{MRSY}).
Напомним некоторые определения, связанные с понятием поверхности,
интеграла по поверхности, а также модулей семейств кривых и
поверхностей.

\subsection{} Для дальнейшего изложения материала понятия площади поверхности
и поверхностного интеграла требуется несколько расширить, не
ограничиваясь исключительно гладкими и однолистными поверхностями.
Пусть далее $\omega$ --- открытое множество в $\overline{{\Bbb
R}^k}$ или более общо --- $k$-мер\-ное многообразие, где
$k=1,\ldots,n-1.$ Тогда {\it $k$-мер\-ной поверхностью} $S$ на
римановом многообразии $({\Bbb M}^n, g)$ называется произвольное
непрерывное отображение $S\colon\omega\rightarrow{\Bbb M}^n.$
Поверхности в ${\Bbb M}^n$ размерности $k=n-1$ принято называть {\it
гиперповерхностями}.  {\it Функцией кратности} $N(S,y)$ поверхности
$S$ называется число прообразов $y \in {\Bbb M}^n.$ Другими словами,
символ $N(S,y)$ обозначает кратность накрытия точки $y$ поверхностью
$S.$ Хорошо известно, что функция кратности является полунепрерывной
снизу, т.е., для каждой последовательности $y_m\in{\Bbb M}^n$,
$m=1,2,\ldots\,$, такой что $y_m\rightarrow y\in{\Bbb M}^n$ при
$m\rightarrow\infty,$ выполняется условие $N(S,y)\ \geqslant\
\liminf\limits_{m\rightarrow\infty}\:N(S,y_m),$ см., напр.,
\cite[с.~160]{RR}. Отсюда следует, что функция $N(S,y)$ является
измеримой по Борелю и, следовательно, измеримой относительно
произвольной хаусдорфовой меры $\mathcal{H}^k,$ см., напр.,
\cite[теорему~II ~(7.6)]{Sa}. Здесь хаусдорфовы меры соответствуют
геодезическому расстоянию на римановом многообразии ${\Bbb M}^n$.

\medskip
В настоящем разделе $\mathcal{H}^k$, $k=1,\ldots,n$ обозначает {\it
$k$-мер\-ную меру Хаусдорфа} на римановом многообразии $({\Bbb
M}^n,g)$ относительно геодезического расстояния $d$. Точнее, если
$A$~--- множество в ${\Bbb M}^n$, то
$$\mathcal{H}^k(A):=\sup_{\varepsilon>0}\ \mathcal{H}^k_{\varepsilon}(A)\,,$$
$$\mathcal{H}^k_{\varepsilon}(A):= \inf\sum^{\infty}_{i=1}\left({\rm diam}A_i\right)^k\,,$$
где инфимум %в (\ref{Smolka_gl_12_eq8.2.2})
берётся по всем покрытиям $A$ множествами $A_i$ с ${\rm diam}\,
A_i<\varepsilon.$ Отметим, что $\mathcal{H}^k$ является {\it внешней
мерой в смысле Каратеодори}, см.~\cite{Sa}.

Напомним, что {\it $k$-мер\-ной хаусдорфовой площадью} борелевского
множества $B$ в ${\Bbb M}^n$ (либо просто {\it площадью
$B$}\index{площадь $B$} при $k=n-1$), ассоциированной с поверхностью
$S\colon\omega\rightarrow {\Bbb M}^n,$ называем величину
\begin{equation*} \label{Smolka_gl_12_eq8.2.4} {\mathcal {A}}_S(B)\ =\
{\mathcal{A}}^{k}_S(B)\ :=\ \int\limits_B N(S,y)\ d\mathcal{H}^ky\,,
\end{equation*}
см., напр., \cite[разд.~3.2.1]{Fe}. Поверхность $S$ называется {\it
спрямляемой} ({\it квадрируемой}), если ${\mathcal {A}}_S({\Bbb
M}^n)<\infty$, см., напр., \cite[разд.~9.2]{MRSY}.
\medskip
Соответственно, для борелевской функции $\rho\colon{\Bbb
M}^n\rightarrow[0,\infty]$ её {\it интеграл над поверхностью} $S$
определяем равенством
\begin{equation*}\label{Smolka_gl_12_eq8.2.5}
\int\limits_S \rho\ d{\mathcal {A}}\ :=\ \int\limits_{{\Bbb
M}^n}\rho(y)\:N(S,y)\ d\mathcal{H}^ky\,.\end{equation*}

\subsection{}
Пусть $n\geqslant 2,$ и $\Gamma$~--- семейство $k$-мерных
поверхностей $S.$ Борелевскую функцию $\rho\colon{\Bbb
M}^n\rightarrow\overline{{\Bbb R}^+}$ будем называть {\it
допустимой} для семейства $\Gamma,$ сокр. $\rho\in{\rm
adm}\,\Gamma,$ если
\begin{equation}\label{eq8.2.6}\int\limits_S\rho^k\,d{\mathcal{A}}\geqslant 1\end{equation}
для каждой поверхности $S\in\Gamma.$
Для заданного числа $p\in(0,\infty)$  {\it $p$-модулем} семейства
$\Gamma$ назовём величину
$$M_p(\Gamma)=\inf_{\rho\in{\rm adm}\,\Gamma} \int\limits_{{\Bbb
M}^n}\rho^p(x)\,dv(x)\,.$$ Мы также полагаем
$M(\Gamma)=M_n(\Gamma),$
а величину $M(\Gamma)$ в этом случае называем {\it модулем
семейства} $\Gamma.$ Заметим, что модуль семейств поверхностей,
определённый таким образом, представляет собой внешнюю меру в
пространстве всех $k$-мерных поверхностей (см. \cite{Fu}).

\subsection{}\label{r4.3} Пусть $p\geqslant 1.$ Говорят, что некоторое свойство $P$ выполнено для {\it $p$-почти всех
поверхностей} области $D,$ если оно имеет место для всех
поверхностей, лежащих в $D,$ кроме, быть может, некоторого их
подсемейства, $p$-модуль которого равен нулю.

\medskip Мы будем говорить, что измеримая относительно меры объёма
$v$ функция $\rho\colon{\Bbb M}^n\rightarrow\overline{{\Bbb R}^+}$
{\it $p$-обобщённо допустима} для семейства $\Gamma,$ состоящего из
$k$-мерных поверхностей $S$ в ${\Bbb M}^n,$ сокр. $\rho\in{\rm
ext}_p\,{\rm adm}\,\Gamma,$ если соотношение~\eqref{eq8.2.6}
выполнено для $p$-почти всех поверхностей $S$ семейства $\Gamma.$
{\it Обобщённый $p$-модуль} $\overline M_p(\Gamma)$ семейства
$\Gamma$ определяется равенством
$$\overline M_p(\Gamma)= \inf\int\limits_{{\Bbb
M}^n}\rho^p(x)\,dv(x)\,,$$
где точная нижняя грань берётся по всем функциям $\rho\in{\rm
ext}_p\,{\rm adm}\,\Gamma.$ В случае $p=n$ мы используем обозначения
$\overline M(\Gamma)$ и $\rho\in{\rm ext}\,{\rm adm}\,\Gamma,$
соответственно. Очевидно, что при каждом $p\in(0,\infty),$
$k=1,\ldots,n-1,$ и каждого семейства $k$-мерных поверхностей
$\Gamma$ в ${\Bbb M}^n,$ выполнено равенство $\overline
M_p(\Gamma)=M_p(\Gamma).$

\subsection{} Следующий класс отображений представляет собой обобщение
квазиконформных отображений в смысле кольцевого определения по
Герингу и отдельно исследуется различными авторами (см., напр.,
\cite[глава~9]{MRSY}). Пусть $n\geqslant 2,$ $D$ и
$D^{\,\prime}$~--- заданные области в ${\Bbb M}^n$ и ${\Bbb M}_*^n,$
соответственно, $x_0\in\overline{D}$ и $Q\colon
D\rightarrow(0,\infty)$~--- измеримая функция относительно меры
объёма $v.$ Пусть $U$~--- нормальная окрестность, содержащая точку
$x_0,$ $p\geqslant 1,$ тогда будем говорить, что $f\colon
D\rightarrow D^{\,\prime}$~--- {\it нижнее $(p, Q)$-отображение в
точке} $x_0,$ как только
\begin{equation}\label{eq1A}
M_p(f(\Sigma_{\varepsilon}))\geqslant \inf\limits_{\rho\in{\rm
ext\,adm}\,\Sigma_{\varepsilon}}\int\limits_{D\cap A(x_0,
\varepsilon, \varepsilon_0)}\frac{\rho^p(x)}{Q(x)}\,dv(x)
\end{equation}
для каждого кольца $A(x_0, \varepsilon, \varepsilon_0),$
$\varepsilon_0\in(0,d_0),$ $d_0=\sup\limits_{x\in U}d(x, x_0),$
где $\Sigma_{\varepsilon}$ обозначает семейство всех пересечений
геодезических сфер $S(x_0, r)$ с областью $D,$ $r\in (\varepsilon,
\varepsilon_0).$ Немного позже мы поговорим также и о примерах таких
отображений (см.\ лемму~\ref{thOS4.1}).

\subsection{}
Аналог следующего утверждения в ${\Bbb R}^n$ для случая <<почти всех
гиперплоскостей>> доказан в \cite[лемма~9.1]{MRSY}).

 \begin{lemma}\label{lem8.2.11}
{\, Пусть $D$~--- область риманова многообразия ${\Bbb M}^n,$
$p\geqslant n-1,$ $n\geqslant 2,$ и $x_0\in D.$ Если некоторое
свойство $P$ имеет место для $p$-почти всех сфер $D(x_0, r):=S(x_0,
r)\cap D,$ лежащих в некоторой нормальной окрестности $U$ точки
$x_0,$ где <<$p$-почти всех>> понимается в смысле модуля семейств
поверхностей\/{\em,} и, кроме того, множество
$$E=\{r\in {\Bbb R}|
P\,\,\,\,\text{имеет\,\, место для}\,\,\,\, S(x_0, r)\cap D\}$$
измеримо по Лебегу, то $P$ также имеет место для почти всех сфер
$D(x_0, r),$ лежащих в некоторой нормальной окрестности $U$ точки
$x_0,$ относительно линейной меры Лебега по параметру $r\in {\Bbb R
}.$ Обратно\/{\em,} пусть $P$ имеет место для почти всех сфер
$D(x_0, r):=S(x_0, r)\cap D$ относительно линейной меры Лебега по
$r\in {\Bbb R},$ тогда $P$ также имеет место для $p$-почти всех
поверхностей $D(x_0, r):=S(x_0, r)\cap D$ в смысле модуля семейств
поверхностей и для любого $p\geqslant n-1.$}
 \end{lemma}

 \begin{proof} {\it Необходимость.} Пусть некоторое свойство
$P$ имеет место для $p$-почти всех сфер $D(x_0, r):=S(x_0, r)\cap
D,$ где <<почти всех>> понимается в смысле модуля семейств
поверхностей, $p\geqslant n-1,$ и соответствующее множество $E$
измеримо по Лебегу. Покажем, что $P$ также имеет место для почти
всех сфер $D(x_0, r)$ по отношению к параметру $r\in {\Bbb R}.$
Поскольку произвольная область $D$ может быть покрыта не более чем
счётным объединением концентрических шаров, $D\subset
\bigcup\limits_{i=1}^{\infty} B(x_0, R_i),$ то для установления
заключения теоремы достаточно рассмотреть случай, когда $D\subset
B(0, R),$ $0<R<\infty.$

Предположим, что заключение леммы не является верным. Тогда найдётся
семейство $\Gamma$ сфер $D(x_0, r),$ лежащих в некоторой нормальной
окрестности $U$ точки $x_0,$ для которого свойство $P$ выполнено в
смысле почти всех поверхностей относительно $p$-модуля, однако,
нарушается для некоторого множества индексов $r\in {\Bbb R}$
положительной меры.

Ввиду регулярности меры Лебега $m_{1}$ найдётся борелевское
множество $B\subset {\Bbb R},$ такое что $m_{1}(B)>0$ и свойство $P$
нарушается для почти всех $r\in B.$ Пусть $\rho\colon{\Bbb
M}^n\rightarrow[0,\infty]$~--- допустимая функция для семейства
$\Gamma.$ %Заметим, что $\varphi:U\rightarrow {\Bbb R}^n$~--- карта,
%отображающая геодезический шар $B(x_0, r)\subset U,$ лежащий в
%нормальной окрестности $U\subset {\Bbb M}^n,$ на шар
%$\widetilde{B}(0, R)\subset {\Bbb R}^n.$
Учитывая, что $B$~--- борелево, мы можем считать, что $\rho\equiv 0$
вне $\widetilde{E}=\{x\in B(0, R)\,|\,\exists\,\, r\in B:\, d(x,
x_0)=r\},$ поскольку, в этом случае, множество $\widetilde{E},$
очевидно, борелево. По неравенству Гёльдера
$$\int\limits_{\widetilde{E}}\rho^{n-1}(x)\ dv(x)\ \leqslant\
\left(\int\limits_{\widetilde{E}}\rho^p(x)\
dv(x)\right)^{\frac{n-1}{p}}\left(\int\limits_{\widetilde{E}}\
dv(x)\right)^{\frac{p-n+1}{p}}$$
и следовательно, ввиду замечания \ref{rem1B},
$$\int\limits_{{\Bbb R}^n}\rho^p(x)\ dv(x)\ \geqslant\ \frac{\left(\int\limits_{\widetilde{E}}\rho^{n-1}(x)\
dv(x)\right)^{\frac{p}{n-1}}}{\left(\int\limits_{\widetilde{E}}\
dv(x)\right)^{\frac{p-n+1}{n-1}}}\ \geqslant\
\frac{(m_{1}(B))^{\frac{p}{n-1}}}{c}$$ для некоторого $c>0,$ т.е.,
$M_p(\Gamma)>0,$  что противоречит предположению леммы. Первая часть
леммы \ref{lem8.2.11} доказана.

\medskip
{\it Достаточность.} Пусть $P$ имеет место для почти всех $r$
относительно меры Лебега и всех соответствующих этим $r$ сфер
$D(x_0, r),$ $r\in {\Bbb R}.$ Покажем, что $P$ также выполняется для
$p$-почти всех поверхностей $D(x_0, r):=S(x_0, r)\cap D$ в смысле
модуля семейств поверхностей.

Обозначим через $\Gamma_0$ семейство всех пересечений
$D_r:=D(x_0,r)$ сфер $S(x_0, r)$ с областью $D,$ для которых $P$ не
имеет места. Пусть $R$ обозначает множество всех $r\in {\Bbb R}$
таких, что $D_r\in\Gamma_0.$ Если $m_1(R)=0,$ то ввиду аналога
теоремы Фубини (см. замечание \ref{rem1B}) мы получаем, что
$v(\widetilde{E})=0,$ где $\widetilde{E}=\{x\in D\,|\, d(x,
x_0)=r\in R\}.$ Рассмотрим функцию $\rho_1\colon{\Bbb
M}^n\rightarrow [0, \infty],$ определённую символом $\infty$ при
$x\in \widetilde{E},$ и доопределённую нулём в остальных точках.
Отметим, что найдётся борелева функция $\rho_2\colon{\Bbb
M}^n\rightarrow [0, \infty],$ совпадающая почти всюду с $\rho_1$
(см.~\cite[раздел~2.3.5]{Fe}). Таким образом,
$M_p(\Gamma_0)\leqslant \int\limits_{\widetilde{E}} \rho_2^p
dv(x)=\int\limits_{\widetilde{E}} \rho_1^p dv(x)=0,$ следовательно,
$M_p(\Gamma_0)=0.$ Лемма~\ref{lem8.2.11} полностью доказана.~$\Box$
\end{proof}

\subsection{} Следующее утверждение является весьма полезным для
настоящего исследования (см.~\cite[лемма~9.2]{MRSY}).

\medskip
 \begin{propos}\label{Salnizh1}
{ Пусть $(X, \mu)$~--- измеримое пространство с конечной мерой
$\mu,$ $q\in(1,\infty),$ и пусть $\varphi\colon X\to(0,\infty)$~---
измеримая функция. Полагаем
\begin{equation}\label{Sal_eq2.1.7}
I(\varphi, q)=\inf\limits_{\alpha}
\int\limits_{X}\varphi\,\alpha^q\,d\mu\,,
\end{equation}
где инфимум берется по всем измеримым функциям $\alpha\colon
X\rightarrow[0,\infty]$ таким\/{\em,} что
%
%\begin{equation*}%\label{Sal_eq2.1.8}
$\int\limits_{X}\alpha\,d\mu=1.$ %\end{equation*}
Тогда
%
%\begin{equation*}%\label{Sal_eq2.1.9}
$I(\varphi, q)=\left[\int\limits_{X}\varphi^{-\lambda}\,d\mu\right]
^{-\frac{1}{\lambda}},$ %\end{equation*}
где
%\begin{equation*}%\label{Sal_eq2.1.10}
$\lambda=\frac{q^{\,\prime}}{q},$ $\frac{1}{q}+\frac{1}{q^{\,\prime}}=1,$ %\end{equation*}
т.е. $\lambda=1/(q-1)\in(0,\infty).$ Точная нижняя грань
в~\eqref{Sal_eq2.1.7} достигается на функции
$$
\rho=\left(\int\limits_X
\varphi^\frac{1}{1-q}\;d\mu\right)^{-1}\varphi^\frac{1}{1-q}\,.
$$}
\end{propos}

\subsection{} Следующее утверждение является обобщением
теоремы 9.2 в~\cite{MRSY} на случай отображений, заданных на
многообразии и произвольный порядок модуля $p>n-1.$

\medskip
 \begin{lemma}\label{lem4A}
{\, Пусть $n\geqslant 2,$ $p>n-1,$ $D$ и $D^{\,\prime}$~--- заданные
области в ${\Bbb M}^n$ и ${\Bbb M}_*^n,$ соответственно\/{\em,}
$x_0\in\overline{D}$ и $Q\colon D\rightarrow(0,\infty)$~---
измеримая функция. Если отображение $f\colon D\rightarrow
D^{\,\prime}$ является нижним $(p, Q)$-отображением в точке $x_0,$
то при произвольном $\varepsilon_0>0$ таком\/{\em,} что
$\overline{B(x_0, \varepsilon_0)}$ лежит в нормальной окрестности
$U$ точки $x_0$ и некоторой постоянной $C_1>0$ имеем
\begin{equation}\label{eq15}
M_p(f(\Sigma_{\varepsilon}))\geqslant
C_1\cdot\int\limits_{\varepsilon}^{\varepsilon_0}
\frac{dr}{\Vert\,Q\Vert_{s}(r)}\quad\forall\
\varepsilon\in(0,\varepsilon_0)\,,\ \varepsilon_0\in(0,d_0)\,,
\end{equation}
где $s=\frac{n-1}{p-n+1}$ и \/{\em,} как и выше\/{\em,}
$\Sigma_{\varepsilon}$ обозначает семейство всех пересечений сфер
$S(x_0, r)$ с областью $D,$ $r\in (\varepsilon, \varepsilon_0),$
$$
\Vert
Q\Vert_{s}(r)=\left(\int\limits_{D(x_0,r)}Q^{s}(x)\,d{\mathcal{A}}\right)^{1/s}$$~---
$L_{s}$-норма функции $Q$ над пересечением $D\cap
S(x_0,r)=D(x_0,r)=\{x\in D\,|\, d(x, x_0)=r\}$.

\medskip
Обратно\/{\em,} если соотношение~\eqref{eq15} выполнено при
некотором $\varepsilon_0>0$ и некоторой постоянной $C_1>0,$ то $f$
является нижним $C_2\cdot Q$-отображением в точке $x_0,$ где
$C_2>0$~--- также некоторая постоянная.}
 \end{lemma}

\medskip
 \begin{proof}
Для удобства положим
$$P(x)=P(Q, \rho, p, x):=\frac{\rho^p(x)}{Q(x)}\,, \quad \psi(r):=\inf\limits_{\alpha\in
I(r)}\int\limits_{D(x_0,r)}\frac{\alpha^q(x)}{Q(x)}\ d{\mathcal
H}^{n-1}\,,$$
где $q=p/(n-1)>1$ и $I(r)$ обозначает множество всех измеримых
функций $\alpha$ на сфере $D(x_0,r)=S(x_0,r)\cap D$, таких что
$$\int\limits_{D(x_0,r)}\alpha(x) d{\mathcal H}^{n-1}=1\,.$$
Не ограничивая общности рассуждений, мы можем считать, что $\Vert
Q\Vert_{s}(r)<\infty$ при почти всех $r\in (\varepsilon,
\varepsilon_0).$ Относительно функции $\rho\in {\rm
ext\,adm}\,\Sigma_{\varepsilon}$ положим
$A_{\rho}(r):=\int\limits_{D(x_0,r)}\rho^{n-1}(x)\ d{\mathcal A}$ и
заметим, что $A_{\rho}(r)$ --- измеримая по Лебегу функция
относительно параметра $r$ ввиду замечания \ref{rem1B}.
Следовательно, множество $E\subset {\Bbb R}$ всех таких $r>0,$ для
которых $A_{\rho}(r)=\int\limits_{D(x_0,r)}\rho^{n-1}(x)\ d{\mathcal
A}\ne 0,$ измеримо по Лебегу и, значит, по лемме \ref{lem8.2.11}
$$A_{\rho}(r)=\int\limits_{D(x_0,r)}\rho^{n-1}(x)\ d{\mathcal A}\ne 0$$
при почти всех $r\in (\varepsilon, \varepsilon_0).$ Пусть
$A_{\Sigma_\varepsilon}$ обозначает класс всех измеримых
относительно меры $v$ функций $\rho\colon {\Bbb
M}^n\rightarrow\overline{{\Bbb R}^+},$ удовлетворяющих условию
$\int\limits_{D(x_0, r)}\rho^{n-1}\,d{\mathcal A}=1$ для почти всех
$r\in (\varepsilon, \varepsilon_0).$ Для удобства обозначим
$D_{\varepsilon}:=D\cap A(x_0, \varepsilon, \varepsilon_0).$
Поскольку $A_{\Sigma_\varepsilon}\subset {\rm
ext\,adm}\,\Sigma_{\varepsilon},$  мы получим, что
\begin{equation}\label{eq1AA}
\inf\limits_{\rho\in{\rm
ext\,adm}\,\Sigma_{\varepsilon}}\int\limits_{D_{\varepsilon}}P(x)\,dv(x)\leqslant
\inf\limits_{\rho\in
A_{\Sigma_\varepsilon}}\int\limits_{D_{\varepsilon}}P(x)\,dv(x)\,.
\end{equation}
С другой стороны, для заданной функции $\rho\in {\rm
ext\,adm}\,\Sigma_{\varepsilon}$ ввиду замечания \ref{rem1B} мы
получим, что для некоторой постоянной $C=C(x_0)>0$
$$\inf\limits_{\beta\in A_{\Sigma_\varepsilon}}\int\limits_{D_{\varepsilon}}\beta^p(x)Q^{\,-1}(x)\,dv(x)\leqslant $$
 \begin{multline}\label{eq2A}
\leqslant C\cdot
\int\limits_{\varepsilon}^{\varepsilon_0}\left(A_{\rho}(r)\right)^
{p/(1-n)}\int\limits_{D(x_0, r)}P(x)\, d{\mathcal A}dr\leqslant
 C\cdot\int\limits_{D_{\varepsilon}}P(x)\,dv(x)
\end{multline}
поскольку $A_{\rho}(r)\geqslant 1$ для почти всех $r\in
(\varepsilon, \varepsilon_0)$ ввиду леммы \ref{lem8.2.11}. (Здесь
был использован тот факт, что точная нижняя грань любой величины не
превосходит любого её фиксированного значения). Из~\eqref{eq1AA}
и~\eqref{eq2A} мы будем иметь, что
 \begin{multline}\label{eq12}
\inf\limits_{\rho\in
A_{\Sigma_\varepsilon}}\int\limits_{D_{\varepsilon}}P(x)\,dv(x)\geqslant
\inf\limits_{\rho\in{\rm ext}\,{\rm
adm}\,\Sigma_{\varepsilon}}\int\limits_{D_{\varepsilon}}P(x)\,dv(x)\geqslant\frac{1}{C}\cdot\inf\limits_{\rho\in
A_{\Sigma_\varepsilon}}\int\limits_{D_{\varepsilon}}P(x)\,dv(x)\,.
 \end{multline}
Покажем, что
 \begin{multline}\label{eq9A}
\frac{1}{C}\cdot
\int\limits_{\varepsilon}^{\varepsilon_0}\psi(r)\,dr\leqslant\inf\limits_{\rho\in
A_{\Sigma_\varepsilon}}\int\limits_{D_{\varepsilon}}P(x)\
dv(x)\leqslant C\cdot
\int\limits_{\varepsilon}^{\varepsilon_0}\psi(r)\,dr\,.
 \end{multline}

\medskip
Прежде всего, ввиду предложения \ref{Salnizh1} мы будем иметь, что
\begin{equation}\label{eq14}
\psi(r)=(\Vert Q\Vert_{s}(r))^{-1}=\left(\int\limits_{D(x_0,r)}
Q^{s}(x)\,d{\mathcal H}^{n-1}\right)^{-1/s}\,,
\end{equation}
где $s=\frac{n-1}{p-n+1}.$ Следовательно, ввиду
замечания~\ref{rem1B} функция $\psi(r)$ измерима по $r,$ так что
интеграл в правой части~\eqref{eq9A} определён корректно.

\medskip
Пусть $\rho\in A_{\Sigma_\varepsilon},$ тогда функция
$\rho_r(x):=\rho|_{S(x_0, r)}$ измерима по отношению к хаусдорфовой
мере ${\mathcal H}^{n-1}$ для п.в.\ $r\in (\varepsilon,
\varepsilon_0)$ ввиду замечания \ref{rem1B}. Следовательно,
 \begin{multline*}
\int\limits_{D_{\varepsilon}}P(x) dv(x)\geqslant\frac{1}{C}\cdot
\int\limits_{\varepsilon}^{\varepsilon_0}\int\limits_{D(x_0,
r)}\rho_r^q(x)Q^{\,-1}(x)\ d{\mathcal H}^{n-1}dr \geqslant
\frac{1}{C}\cdot\int\limits_{\varepsilon}^{\varepsilon_0}\psi(r)dr\,.
\end{multline*}
Переходя к $\inf$ по всем $\rho\in A_{\Sigma_\varepsilon},$ мы
получим
\begin{equation}\label{eq11}
\inf\limits_{\rho\in
A_{\Sigma_\varepsilon}}\int\limits_{D_{\varepsilon}}P(x)\
dv(x)\geqslant \frac{1}{C}\cdot
\int\limits_{\varepsilon}^{\varepsilon_0} \psi(r)dr\,.
\end{equation}
Докажем теперь верхнее неравенство в~\eqref{eq9A}. Ввиду
предложения~\ref{Salnizh1} точная нижняя грань выражения
$\psi(\alpha, r)=\int\limits_{D(x_0,r)}\alpha^q(x)Q^{\,-1}(x)\
d{\mathcal H}^{n-1}$ по всем $\alpha\in I(r)$ достигается на функции
$$\alpha_0(x):= Q^{s}(x)\left(\int\limits_{D(x_0,r)}
Q^{s}(x)\right)^{\,-1}\,d{\mathcal H}^{n-1}\,,\quad
s=\frac{n-1}{p-n+1}\,.$$
Ввиду сделанного выше предположения, $\Vert Q\Vert_{s}(r)<\infty$
при почти всех $r\in (\varepsilon, \varepsilon_0).$ Следовательно,
$\alpha_0^{1/(n-1)}\in A_{\Sigma_\varepsilon}$ и, значит,
 \begin{multline}\label{eq13}
\inf\limits_{\rho\in
A_{\Sigma_\varepsilon}}\int\limits_{D_{\varepsilon}}P(x)\
dv(x)\leqslant
\int\limits_{D_{\varepsilon}}\alpha_0^{p/(n-1)}(x)Q^{\,-1}(x)\
dv(x)\leqslant C\cdot \int\limits_{\varepsilon}^{\varepsilon_0}
\psi(r)\,dr\,.
 \end{multline}
Из неравенств~\eqref{eq11} и~\eqref{eq13} следует неравенство
\eqref{eq9A}. Окончательно, ввиду равенства~\eqref{eq14},
из~\eqref{eq9A} вытекает, что
 \begin{multline}\label{eq35}
\frac{1}{C}\cdot \int\limits_{\varepsilon}^{\varepsilon_0}(\Vert
Q\Vert_{s}(r))^{-1}dr\leqslant\inf\limits_{\rho\in
A_{\Sigma_\varepsilon}}\int\limits_{D_{\varepsilon}}P(x)\
dv(x)\leqslant C\cdot
\int\limits_{\varepsilon}^{\varepsilon_0}(\Vert
Q\Vert_{s}(r))^{-1}dr\,,
 \end{multline}
где, как и прежде, $s=\frac{n-1}{p-n+1}.$

\medskip
Выше были изложены некоторые наводящие соображения, благодаря
которым мы можем теперь приступить непосредственно к доказательству
леммы.

\medskip
{\it Необходимость.} Пусть $f$~--- нижнее $(p, Q)$-отображение в
точке $x_0,$ тогда $f$ удовлетворяет соотношению~\eqref{eq1A}.
Однако, ввиду соотношений~\eqref{eq12} и~\eqref{eq35} имеем
соотношение~\eqref{eq15} при некоторой постоянной
$C_1:=\frac{1}{C^2}.$

\medskip
{\it Достаточность.} Пусть мы имеем соотношение~\eqref{eq15},
выполненное при некоторой постоянной $C_1>0,$ тогда ввиду
соотношений~\eqref{eq12} и~\eqref{eq35} $f$ является нижним $(p,
C_2\cdot Q)$-отображением в точке $x_0,$ где $C_2:=C/C_{1}.$~$\Box$
\end{proof}

\medskip
Напомним, что отображение $f\colon X\rightarrow Y$ между
пространствами с мерами $(X, \Sigma, \mu)$ и $(Y, \Sigma^{\,\prime},
\mu^{\,\prime})$ обладает {\it $N$-свой\-с\-т\-вом} (Лузина), если
из условия $\mu(S)=0$ следует, что $\mu^{\,\prime}(f(S))=0.$
Аналогично, будем говорить, что отображение $f\colon X\rightarrow Y$
между пространствами с мерами $(X, \Sigma, \mu)$ и $(Y,
\Sigma^{\,\prime}, \mu^{\,\prime})$ обладает {\it
$N^{\,-1}$-свой\-с\-т\-вом} (Лузина), если из условия
$\mu^{\,\prime}(f(S))=0$ следует, что $\mu(S)=0.$ Следующее
вспомогательное утверждение получено в работе \cite{KRSS} (см.\
теорема 1 и следствие 2).

\medskip
 \begin{propos}\label{pr1}
{\, Пусть $D$~--- область в ${\Bbb R}^n,$ $n\geqslant 3,$
$\varphi\colon(0,\infty)\rightarrow (0,\infty)$~--- неубывающая
функция\/{\em,} удовлетворяющая условию~\eqref{eqOS3.0a}.
Тогда\/{\em:}

{\em 1)} Если $f\colon D\rightarrow{\Bbb R}^n$~--- непрерывное
открытое отображение класса $W^{1,\varphi}_{loc}(D),$ то $f$ имеет
почти всюду полный дифференциал в $D;$

{\em 2)} Любое непрерывное отображение $f\in W^{1,\varphi}_{loc}$
обладает $N$-свойством относительно $(n-1)$-мерной меры
Хаусдорфа\/{\em,} более того\/{\em,} локально абсолютно непрерывно
на почти всех сферах $S(x_0, r)$ с центром в заданной предписанной
точке $x_0\in{\Bbb R}^n$. Кроме того\/{\em,} на почти всех таких
сферах $S(x_0, r)$ выполнено условие ${\mathcal H}^{n-1}(f(E))=0,$
как только $|\nabla f|=0$ на множестве $E\subset S(x_0, r).$
{\em(}Здесь {\em<<}почти всех{\em>>} понимается относительно
линейной меры Лебега по параметру $r${\em)}.}
 \end{propos}

\medskip
Обозначим через $J_{n-1}f_r(a)$ величину, означающую $(n-1)$-мерный
якобиан сужения отображения $f$ на сферу $S(x_0, r)\supset a$ в
точке $a$ (см. \cite[раздел~3.2.1]{Fe}). Предположим, что
отображение $f:D\rightarrow {\Bbb R}^n$ дифференцируемо в точке
$x_0\in D$ и матрица Якоби $f^{\,\prime}(x_0)$ невырождена, $J(x_0,
f)={\rm det\,}f^{\,\prime}(x_0)\ne 0.$ Тогда найдутся системы
векторов $e_1,\ldots, e_n$ и
$\widetilde{e_1},\ldots,\widetilde{e_n}$ и положительные числа
$\lambda_1(x_0),\ldots,\lambda_n(x_0),$
$\lambda_1(x_0)\leqslant\ldots\leqslant\lambda_n(x_0),$ такие что
$f^{\,\prime}(x_0)e_i=\lambda_i(x_0)\widetilde{e_i}$ (см.
\cite[теорема~2.1 гл. I]{Re}), при этом,
\begin{equation}\label{eq11C}
J(x_0, f)=\lambda_1(x_0)\ldots\lambda_n(x_0),\quad \Vert
f^{\,\prime}(x_0)\Vert =\lambda_n(x_0)\,, \quad
l(f^{\,\prime}(x))=\lambda_1(x_0)\,,\end{equation}
\begin{equation}\label{eq41}
K_{I, p}(x_0,
f)=\frac{\lambda_1(x_0)\cdots\lambda_n(x_0)}{\lambda^p_1(x_0)}\,,
\end{equation}
см. \cite[соотношение~(2.5), разд.~2.1, гл.~I]{Re}. Числа
$\lambda_1(x_0),\ldots\lambda_n(x_0)$ называются {\it главными
значениями}, а вектора $e_1,\ldots, e_n$ и
$\widetilde{e_1},\ldots,\widetilde{e_n}$ -- {\it главными векторами
} отображения $f^{\,\prime}(x_0).$ Из геометрического смысла
$(n-1)$-мерного якобиана, а также первого соотношения в
(\ref{eq11C}) вытекает, что
\begin{equation}\label{eq10C}
\lambda_1(x_0)\cdots\lambda_{n-1}(x_0)\leqslant
J_{n-1}f_r(x_0)\leqslant \lambda_2(x_0)\cdots\lambda_n(x_0)\,,
\end{equation}
в частности, из (\ref{eq10C}) следует, что $J_{n-1}f_r(x_0)$
положителен во всех тех точках $x_0,$ где положителен якобиан
$J(x_0, f).$

\medskip
Следующая лемма в ${\Bbb R}^n$ и для случая гомеоморфизмов
установлена Д.~Ковтонюком и В.~Рязановым в
\cite[теорема~2.1]{KR$_1$}.

\medskip
\begin{lemma}{}\label{thOS4.1} { Пусть $D$ -- область в ${\Bbb M}^n,$
$n\geqslant 3,$ $\varphi:(0,\infty)\rightarrow (0,\infty)$ --
неубывающая функция, удовлетворяющая условию (\ref{eqOS3.0a}).
Если $p>n-1,$ то каждое открытое дискретное отображение
$f:D\rightarrow {\Bbb M}_*^n$ с конечным искажением класса
$W^{1,\varphi}_{loc}$ такое, что $N(f, D)<\infty,$ является нижним
$Q$-отображением относительно $p$-модуля в каждой точке
$x_0\in\overline{D}$ при $$Q(x)=N(f, D)\cdot
K^{\frac{p-n+1}{n-1}}_{I, \alpha}(x, f),\quad
\alpha:=\frac{p}{p-n+1}\,,$$ где внутренняя дилатация
$K_{I,\alpha}(x, f)$ отображения $f$ в точке $x$ порядка $\alpha$
определена соотношением (\ref{eq0.1.1A}), а кратность $N(f, D)$
определена вторым соотношением в (\ref{eq1.7A}).}
\end{lemma}

\medskip
\begin{proof}
Заметим, что $f$ дифференцируемо почти всюду ввиду предложения
\ref{pr1}. Пусть $B$ -- борелево множество всех точек $x\in D,$ в
которых $f$ имеет полный дифференциал $f^{\,\prime}(x)$ и $J(x,
f)\ne 0$ в локальных координатах. Применяя теорему Кирсбрауна и
свойство единственности аппроксимативного дифференциала (см.
\cite[пункты~2.10.43 и 3.1.2]{Fe}), мы видим, что множество $B$
представляет собой не более чем счётное объединение борелевских
множеств $B_l,$ $l=1,2,\ldots\,,$ таких, что сужения $f_l=f|_{B_l}$
являются билипшецевыми гомеоморфизмами (см., напр.,
\cite[пункты~3.2.2, 3.1.4 и 3.1.8]{Fe}). Без ограничения общности,
мы можем полагать, что множества $B_l$ попарно не пересекаются.
Обозначим также символом $B_*$ множество всех точек $x\in D,$ в
которых $f$ имеет полный дифференциал, однако, $f^{\,\prime}(x)=0.$

\medskip
Ввиду построения, множество $B_0:=D\setminus \left(B\bigcup
B_*\right)$ имеет меру объёма, равную нулю. Следовательно, по
\cite[теорема~9.1]{MRSY}, ${\mathcal H}^{n-1}(B_0\cap S_r)=0$ для
$p$-почти всех сфер $S_r:=S(x_0,r)$ с центром в точке
$x_0\in\overline{D},$ где <<$p$-почти всех>> следует понимать в
смысле $p$-модуля семейств поверхностей. По лемме \ref{lem8.2.11}
также ${\mathcal H}^{n-1}(B_0\cap S_r)=0$ при почти всех $r\in {\Bbb
R}.$

\medskip
По предложению \ref{pr1} и из условия ${\mathcal H}^{n-1}(B_0\cap
S_r)=0$ для почти всех $r\in {\Bbb R}$ вытекает, что ${\mathcal
H}^{n-1}(f(B_0\cap S_r))=0$ для почти всех $r\in {\Bbb R}.$ По этому
предложению также ${\mathcal H}^{n-1}(f(B_*\cap S_r))=0,$ поскольку
$f$ -- отображение с конечным искажением и, значит, $L(x, f)=0$
почти всюду, где $J(x, f)=0.$

\medskip
Пусть $\Gamma$ -- семейство всех пересечений сфер $S_r,$
$r\in(\varepsilon, r_0),$ $r_0<d_0=\sup\limits_{x\in D}\,d(x, x_0),$
с областью $D$ (здесь $\varepsilon$ -- произвольное фиксированное
число из интервала $(0, r_0)$). Для заданной функции $\rho_*\in{\rm
adm}\,f(\Gamma),$ $\rho_*\equiv0$ вне $f(D),$ полагаем $\rho\equiv
0$ вне $B,$
$$\rho(x)\ \colon=\ \rho_*(f(x))\left(\frac{J(x, f)}{l(x, f)}
\right)^{\frac{1}{n-1}} \qquad\text{при}\ x\in B\,.$$
Учитывая соотношения (\ref{eq11C}) и (\ref{eq10C}), для $x\in S_r,$
\begin{equation}\label{eq12C}
\frac{J(x, f)}{l(x, f)} \geqslant J_{n-1}f_r(x)\,.
\end{equation}
Пусть $D_{r}^{\,*}\in f(\Gamma),$ $D_{r}^{\,*}=f(D\cap S_r).$
Заметим, что
$D_{r}^{\,*}=\bigcup\limits_{i=0}^{\infty} f(S_r\cap B_i)\bigcup
f(S_r\cap B_*)$
и, следовательно, для почти всех $r\in (\varepsilon, r_0)$
\begin{equation}\label{eq10B}
1\leqslant \int\limits_{D^{\,*}_r}\rho^{n-1}_*(y)d{\mathcal A_*}
\leqslant \sum\limits_{i=0}^{\infty} \int \limits_{f(S_r\cap B_i)}
\rho^{n-1}_*(y)N (y, f, S_r\cap B_i)d{\mathcal H}^{n-1}y +
\end{equation}
$$+\int\limits_{f(S_r\cap B_*)} \rho^{n-1}_*(y)N (y, f, S_r\cap B_*
)d{\mathcal H}^{n-1}y\,.$$ Учитывая доказанное выше, из
(\ref{eq10B}) мы получаем, что
\begin{equation}\label{eq11B}
1\leqslant \int\limits_{D^{\,*}_r}\rho^{n-1}_*(y)d{\mathcal A_*}
\leqslant \sum\limits_{i=1}^{\infty} \int \limits_{f(S_r\cap B_i)}
\rho^{n-1}_*(y)N (y, f, S_r\cap B_i)d{\mathcal H}^{n-1}y
\end{equation}
для почти всех $r\in (\varepsilon, r_0).$
Рассуждая покусочно на $B_i,$ $i=1,2,\ldots,$ ввиду \cite[1.7.6 и
теорема~3.2.5]{Fe} и (\ref{eq12C}) мы получаем, что
$$\int\limits_{B_i\cap S_r}\rho^{n-1}\,d{\mathcal A}=
\int\limits_{B_i\cap S_r}\rho_*^{n-1}(f(x))\frac{J(x, f)}{l(x,
f)}\,d{\mathcal A}=$$
$$=\int\limits_{B_i\cap S_r}\rho_*^{n-1}(f(x))\cdot \frac{J(x,
f)}{l(x, f)J_{n-1}f(x)}\cdot J_{n-1}f_r(x)\,d{\mathcal A}\geqslant
$$
\begin{equation}\label{eq12B}
\geqslant\int\limits_{B_i\cap S_r}\rho_*^{n-1}(f(x))\cdot
J_{n-1}f_r(x)\,d{\mathcal A}=\int\limits_{f(B_i\cap
S_r)}\rho_{*}^{n-1}\,N(y, f, S_r\cap B_i)d{\mathcal H}^{n-1}y
\end{equation} для почти всех $r\in (\varepsilon, r_0).$
Из (\ref{eq11B}) и (\ref{eq12B}) вытекает, что
$\rho\in{\rm{ext\,adm}}\,\Gamma.$

Замена переменных на каждом $B_l,$ $l=1,2,\ldots\,,$ (см., напр.,
\cite[теорема~3.2.5]{Fe}) и свойство счётной аддитивности интеграла
приводят к оценке
$$\int\limits_{D}\frac{\rho^p(x)}{K^{\frac{p-n+1}{n-1}}_{I,
\alpha}(x, f)}\,dv(x)\leqslant \int\limits_{f(D)}N(f, D)\cdot
\rho^{\,p}_*(y)\, dm(y),\quad \alpha:=\frac{p}{p-n+1}\,,$$ что и
завершает доказательство.~$\Box$
\end{proof}

\medskip
Пусть $G$ -- открытое множество в ${\Bbb R}^n$ и $I=\{x\in{\Bbb
R}^n:a_i<x_i<b_i,i=1,\ldots,n\}$ -- открытый $n$-мерный интервал.
Отображение $f:I\rightarrow{\Bbb R}^n$ {\it принадлежит классу
$ACL$} ({\it абсолютно непрерывно на линиях}), если $f$ абсолютно
непрерывно на почти всех линейных сегментах в $I,$ параллельных
координатным осям. Отображение $f:G\rightarrow{\Bbb R}^n$ {\it
принадлежит классу $ACL$} в $G,$ когда сужение $f|_I$ принадлежит
классу $ACL$ для каждого интервала $I,$ $\overline{I}\subset G.$

\medskip
Следующие важные сведения, касающиеся ёмкости пары множеств
относительно области, могут быть найдены в работе В.~Цимера
\cite{Zi}. Пусть $G$ -- ограниченная область в ${\Bbb R}^n$ и $C_{0}
, C_{1}$ -- непересекающиеся компактные множества, лежащие в
замыкании $G.$ Полагаем  $R=G \setminus (C_{0} \cup C_{1})$ и
$R^{\,*}=R \cup C_{0}\cup C_{1},$ тогда {\it $p$-ёмкостью пары
$C_{0}, C_{1}$ относительно замыкания $G$} называется величина
$C_p[G, C_{0}, C_{1}] = \inf \int\limits_{R} \vert \nabla u
\vert^{p}\ dm(x),$
где точная нижняя грань берётся по всем функциям $u,$ непрерывным в
$R^{\,*},$ $u\in ACL(R),$ таким что $u=1$ на $C_{1}$ и $u=0$ на
$C_{0}.$ Указанные функции будем называть {\it допустимыми} для
величины $C_p [G, C_{0}, C_{1}].$ Мы будем говорить, что  {\it
множество $\sigma \subset {\Bbb R}^n$ разделяет $C_{0}$ и $C_{1}$ в
$R^{\,*}$}, если $\sigma \cap R$ замкнуто в $R$ и найдутся
непересекающиеся множества $A$ и $B,$ являющиеся открытыми в
$R^{\,*} \setminus \sigma,$ такие что $R^{\,*} \setminus \sigma =
A\cup B,$ $C_{0}\subset A$ и $C_{1} \subset B.$ Пусть $\Sigma$
обозначает класс всех множеств, разделяющих $C_{0}$ и $C_{1}$ в
$R^{\,*}.$ Для числа $p^{\prime} = p/(p-1)$ определим величину
%
%\begin{equation}\label{eq13.4.12}
$$\widetilde{M_{p^{\prime}}}(\Sigma)=\inf\limits_{\rho\in
\widetilde{\rm adm} \Sigma} \int\limits_{{\Bbb
R}^n}\rho^{\,p^{\prime}}dm(x)\,,$$
%\end{equation}
%
где запись $\rho\in \widetilde{\rm adm}\,\Sigma$ означает, что
$\rho$ -- неотрицательная борелевская функция в ${\Bbb R}^n$ такая,
что
%
%\begin{equation} \label{eq13.4.13}
$$\int\limits_{\sigma \cap R}\rho d{\mathcal H}^{n-1} \geqslant
1\quad\forall\, \sigma \in \Sigma\,.$$
%\end{equation}
%
Заметим, что согласно результата Цимера
\begin{equation}\label{eq3.2}
\widetilde{M_{p^{\,\prime}}}(\Sigma)=C_p[G , C_{0} ,
C_{1}]^{\,-1/(p-1)}\,,
\end{equation}
см. \cite[теорема~3.13]{Zi} при $p=n$ и \cite[с.~50]{Zi$_1$} при
$1<p<\infty.$ Заметим также, что согласно результата В.А.~Шлык
\begin{equation}\label{eq4.2}
M_p(\Gamma(E, F, D))= C_p[D, E, F]\,,
\end{equation}
см. \cite[теорема~1]{Shl}. Аналог следующей леммы в случае
гомеоморфизмов доказан в монографии \cite[теорема~9.3]{MRSY}.

\medskip
\begin{lemma}\label{lem1A}
{\, Пусть $n\geqslant 2,$ $D$ -- область, лежащая в римановом
многообразии ${\Bbb M}^n,$ $\overline{D}$ -- компакт в ${\Bbb M}^n,$
$n\geqslant p>n-1,$ $x_0\in D$ и $Q:D\rightarrow (0, \infty)$ -
измеримая относительно меры объёма функция такая, что при некотором
$\varepsilon_0>0,$ $\varepsilon_0<{\rm dist}(x_0,
\partial D),$ выполнено условие
\begin{equation}\label{eq9D}
\int\limits_{0}^{\varepsilon_0}
\frac{dt}{t^{\frac{n-1}{\alpha-1}}\widetilde{q}_{x_0}^{\,\frac{1}{\alpha-1}}(t)}=\infty\,,
\end{equation}
$\widetilde{q}_{x_0}(r):=\frac{1}{r^{n-1}}\int\limits_{|x-x_0|=r}Q^{\frac{n-1}{p-n+1}}(x)\,d{\mathcal
H}^{n-1},$ где $\alpha=\frac{p}{p-n+1}.$
Предположим, ${\Bbb M}_*^n$~--- связно\/{\em,} является
$n$-регулярным по Альфорсу\/{\em,} кроме того\/{\em,} в ${\Bbb
M}_*^n$ выполнено $(1;\alpha)$-неравенство Пуанкаре. Пусть, кроме
того, $U^*$~--- некоторая область в ${\Bbb M}_*^n,$ гомеоморфная
какой-либо ограниченной области в ${\Bbb R}^n$ относительно
некоторой фиксированной карты, а $B_R$ -- некоторый шар в ${\Bbb
M}_*^n$ такой, что $\overline{B_R}\subset U^*,$ и $\overline{B_R}$
-- компакт в ${\Bbb M}_*^n,$ $\overline{B_R}\ne{\Bbb M}_*^n.$ Тогда
каждое открытое, дискретное и сохраняющее границу в области
$D\setminus\{x_0\}$ нижнее $(p, Q)$-отображение
$f:D\setminus\{x_0\}\rightarrow B_R$ продолжается в точку $x_0$
непрерывным образом до отображения $f:D\rightarrow \overline{U^*},$
если только $C(f, x_0)\cap C(f,
\partial D)=\varnothing.$}
\end{lemma}

\begin{proof} По условию леммы найдётся гомеоморфизм $\psi: U^*\rightarrow D_*\subset {\Bbb R}^n.$
Не ограничивая общности, можно считать, что
$\overline{\psi(f(D\setminus\{x_0\}))}\subset {\Bbb B}^n.$
Предположим противное, а именно, что отображение $f$ не может быть
продолжено по непрерывности в точку $x_0.$ Тогда, учитывая, что
$\overline{B_R}$ -- компакт в ${\Bbb M}_*^n,$ найдутся две
последовательности $x_j$ и $x_j^{\,\prime},$ принадлежащие
$D\setminus\left\{x_0\right\},$ $x_j\rightarrow x_0,\quad
x_j^{\,\prime}\rightarrow x_0,$ такие, что
$d_*(f(x_j),f(x_j^{\,\prime}))\geqslant a>0$ для всех $j\in {\Bbb
N}.$ %Отсюда следует, что, в частности,
%$|\psi(f(x_j))-\psi(f(x_j^{\,\prime}))|\geqslant a_0>0$ для всех
%$j\in {\Bbb N}.$
%
Можно считать, что $x_j$ и $x_j^{\,\prime}$ лежат внутри
геодезического шара $B(x_0, r_0),$ $r_0:={\rm dist\,}(x_0, \partial
D),$ при этом, $B(x_0, r_0)$ лежит в некоторой нормальной
окрестности точки $x_0.$ Полагаем
$r_j=\max{\left\{d(x_j, x_0),\, d(x_j^{\,\prime}, x_0)\right\}},
l_j=\min{\left\{d(x_j, x_0),\, d(x_j^{\,\prime}, x_0)\right\}}.$
Соединим точки $x_j$ и $x_j^{\,\prime}$ замкнутой кривой, лежащей в
$\overline{B(x_0, r_j)}\setminus\left\{x_0\right\}.$ Обозначим эту
кривую символом $C_j$ и рассмотрим конденсатор
$E_j=\left(D\setminus\left\{x_0\right\}\,,C_j\right)$ (не
ограничивая общности, можно считать, что все точки $x\in C_j$
удовлетворяют неравенству $d(x, x_0)\geqslant l_j$). В силу
открытости и непрерывности отображения $f,$ пара $f(E_j)$ также
является конденсатором. Поскольку $f$ -- открытое, дискретное и
сохраняющее границу отображение, $\partial
\psi(f(D\setminus\{x_0\}))= C(\psi\circ f,
\partial D)\cup C(\psi\circ f, x_0).$ Поскольку $\overline{D}$ --
компакт, множество $C(\psi\circ f,
\partial D)$ является замкнутым.

\medskip
Рассмотрим при $r_j<r<r_0$ проколотый шар $G_1:=B(x_0,
r)\setminus\{x_0\}.$ Заметим, что $C_j$ -- компактное подмножество
$G_1,$ тогда $\psi(f(C_j))$ -- компактное подмножество
$\psi(f(G_1)).$

\medskip
Ввиду открытости $f$ имеет место включение $\partial
\psi(f(G_1))\subset C(\psi\circ f, x_0)\cup \psi(f(S(x_0, r))).$
Действительно, если $y_0\in \partial \psi(f(G_1)),$ то для некоторой
последовательности $y_k\in \psi(f(G_1))$ имеем: $y_k\rightarrow
y_0.$ Тогда $y_k=\psi(f(x_k)),$ $x_k\in G_1.$ Поскольку $G_1$ лежит
в некоторой нормальной окрестности точки $x_0,$ можно считать, что
$x_k\rightarrow z_0\in \overline{G_1}.$ Осталось заметить, что
случай, когда $z_0$ -- внутренняя точка $G_1$ невозможен, поскольку
в этом случае $\psi(f(x_k))\rightarrow \psi(f(z_0)),$ где
$\psi(f(z_0))$ -- внутренняя точка $\psi(f(G_1)),$ что противоречит
выбору $y_k.$ Тогда $z_0\in
\partial G_1=\{0\}\cup S(x_0, r),$ что и доказывает включение
$\partial \psi(f(G_1))\subset C(\psi\circ f, x_0)\cup \psi(f(S(x_0,
r))).$ Тогда ввиду сохранения границы и открытости отображения $f$
множество $\partial \psi(f(G_1))\setminus C(\psi\circ f, x_0)$
является замкнутым в ${\Bbb R}^n.$

\medskip
Отсюда вытекает, что множество $\sigma:=\partial \psi\circ
f(G_1)\setminus C(\psi\circ f, x_0)$ отделяет $\psi\circ f(C_j)$ от
$C(\psi\circ f,
\partial D)$ в $\psi(f(D\setminus\{x_0\}))\cup C(\psi\circ f, \partial D).$
Действительно,
$$\psi(f(D\setminus\{x_0\}))\cup C(\psi\circ f, \partial D)=$$$$=\psi(f(G_1))
\cup\sigma\cup \left((\psi(f(D\setminus\{x_0\}))\cup C(\psi\circ f,
\partial D))\setminus \overline{\psi(f(G_1))}\right)\,,$$
каждое из множеств $A:=\psi(f(G_1))$ и
$B:=(\psi(f(D\setminus\{x_0\}))\cup C(\psi\circ f,
\partial D))\setminus \overline{\psi(f(G_1))}$ открыто в топологии
пространства $\psi(f(D\setminus\{x_0\}))\cup C(\psi\circ f, \partial
D),$ $A\cap B=\varnothing,$ $C_0:=\psi(f(C_j))\subset A$ и
$C_1:=C(\psi\circ f,
\partial D)\subset B.$

\medskip
Полагаем $\alpha:=\frac{p}{p-n+1}.$ Заметим, что $\psi(B_R)$ --
компакт в ${\Bbb R}^n,$ поэтому в локальных координатах,
соответствующих отображению $\psi,$ $C_1\leqslant \sqrt{\det
g_{ij}(x)}\leqslant C_2 $ для всех $x\in \psi(B_R).$ Тогда,
поскольку $\sigma\subset f(S(x_0, r)),$ ввиду (\ref{eq3.2}),
(\ref{eq4.2}) и связи модулей семейств кривых и поверхностей в
многообразии с аналогичными величинами в локальных координатах, мы
можем утверждать, что при некоторой постоянной $C>0$
\begin{equation}\label{eq5C}
M_{\alpha}(\Gamma(f(C_j), C(f, \partial D), f(D\setminus
\{x_0\})))\leqslant
\frac{C}{M_p^{\frac{n-1}{p-n+1}}(f(\Sigma_{r_j}))}\,,
\end{equation}
где $\Sigma_{r_j}$ -- семейство сфер $S(x_0, r),$ $r\in (r_j, r_0).$
С другой стороны, из леммы \ref{lem4A} и условия расходимости
интеграла (\ref{eq9D}) вытекает, что
$M_p^{\frac{n-1}{p-n+1}}(f(\Sigma_{r_j}))\rightarrow\infty$ при
$j\rightarrow \infty.$ В таком случае, из (\ref{eq5C}) следует, что
при $j\rightarrow \infty$
\begin{equation}\label{eq6C}
M_{\alpha}(\Gamma(C(f, D), f(C_j), f(D\setminus\{x_0\})))\rightarrow
0\,.
\end{equation}
Аналогичную процедуру проделаем относительно другого предельного
множества $C(\psi\circ f, x_0).$ Именно, заметим, что $C_j$ --
компакт в $G_2:=D\setminus \overline{B(x_0, \varepsilon)}$ для
произвольного $\varepsilon\in (0, l_j).$ Тогда ввиду непрерывности
$f$ множество $f(C_j)$ является компактным подмножеством $f(G_2)$ и,
в частности, $\partial \psi(f(G_2))\cap \psi(f(C_j))=\varnothing.$
Далее, заметим, что
\begin{equation}\label{eq13A}
\partial
\psi(f(G_2))\subset C(\psi\circ f,
\partial D)\cup \psi(f(S(x_0, \varepsilon)))\,.
\end{equation}
Полагаем $\theta:=\partial \psi(f(G_2))\setminus C(\psi\circ f,
\partial D)$ и заметим, что $\theta$ является замкнутым, поскольку
имеет место соотношение (\ref{eq13A}) и, кроме того, $C(\psi\circ f,
\partial D)\cap \psi(f(S(x_0, \varepsilon)))=\varnothing$ ввиду
сохранения границы отображения $f$ в $D\setminus\{x_0\}.$ Кроме
того, заметим, что $\theta$ отделяет $C_3:=\psi(f(C_j))$ и
$C_4:=C(\psi\circ f, x_0)$ в $\psi(f(D\setminus\{x_0\}))\cup
C(\psi\circ f, x_0).$ Действительно,
$$\psi(f(D\setminus\{x_0\}))\cup C(\psi\circ f, x_0)=$$
$$= \psi(f(G_2))\cup \theta\cup
\left((\psi(f(D\setminus\{x_0\}))\cup C(\psi\circ f,
x_0))\setminus\overline{\psi(f(G_2))}\right)\,,$$
$A=\psi(f(G_2))$ и $B=\left((\psi(f(D\setminus\{x_0\}))\cup
C(\psi\circ f, x_0))\setminus\overline{\psi(f(G_2))}\right)$ открыты
в топологии пространства $\psi(f(D\setminus\{x_0\}))\cup C(\psi\circ
f, x_0),$ $A\cap B=\varnothing,$ $C_3:=\psi(f(C_j))\subset A$ и
$C_4:=C(\psi\circ f, x_0)\subset B.$

\medskip
Как и прежде, полагаем $\alpha:=\frac{p}{p-n+1}.$  Так как
$\theta\subset \psi\circ(f(S(x_0, \varepsilon))),$ ввиду
(\ref{eq3.2}), (\ref{eq4.2}), используя выражение модулей через
локальные координаты, мы получаем:
\begin{equation}\label{eq7C}
M_{\alpha}(\Gamma(f(C_j), C(f, x_0), f(D\setminus
\{x_0\})))\leqslant
\frac{E}{M_p^{\frac{n-1}{p-n+1}}(f(\Theta_{\varepsilon}))}\,,
\end{equation}
где $\Theta_{\varepsilon}$ -- семейство сфер $S(x_0, \varepsilon),$
$\varepsilon\in (0, l_j),$ а $E>0$ -- некоторая постоянная.
С другой стороны, из леммы \ref{lem4A} и условия расходимости
интеграла (\ref{eq9D}) вытекает, что
$M_p^{\frac{n-1}{p-n+1}}(f(\Theta_{\varepsilon}))=\infty.$ В таком
случае, из (\ref{eq7C}) следует, что
\begin{equation}\label{eq8C}
M_{\alpha}(\Gamma(C(f, x_0), f(C_j), f(D\setminus\{x_0\})))=0\,.
\end{equation}
Заметим, что ввиду определения $p$-ёмкости и полуаддитивности модуля
смейств кривых (см. \eqref{eq29*}), при $j\rightarrow \infty$ из
(\ref{eq6C}) и (\ref{eq8C}) вытекает, что
\begin{equation}\label{eq9C}
{\rm cap}_{\alpha}\,f(E_j)\leqslant\end{equation}
$$\leqslant M_{\alpha}(\Gamma(C(f, x_0), f(C_j),
f(D\setminus\{x_0\})))+ M_{\alpha}(\Gamma(C(f,
\partial D), f(C_j), f(D\setminus\{x_0\})))\rightarrow 0\,.$$

С другой стороны, рассмотрим семейство кривых $\Gamma_{f(E_j)}$ для
конденсатора $f(E_j)$ в терминах определения $p$-ёмкости. Заметим,
что подсемейство неспрямляемых кривых семейства $\Gamma_{f(E_j)}$
имеет нулевой модуль, и что оставшееся подсемейство, состоящее из
всех спрямляемых кривых семейства $\Gamma_{f(E_j)},$ состоит из
кривых $\beta\colon [a, b)\rightarrow f(D\setminus\{x_0\}),$ имеющих
предел при $t\rightarrow b$ (здесь учтено, что $\overline{B_R}$ ---
компакт в ${\Bbb M}_*^n$). Заметим, что указанный предел принадлежит
множеству $\partial f(A),$ где $A:=f(D\setminus\left\{x_0\right\}).$
Из сказанного следует, что
\begin{equation}\label{eq1G}
M_{\alpha}(\Gamma_{f(E_j)})=M_{\alpha}(\Gamma(f(C_j), \partial f(A),
f(A)))\,.
\end{equation}
Покажем, что найдётся невырожденный континуум $K$ такой, что
$K\subset\partial f(A).$ Допустим, что ${\rm dim}\, \partial f(A)=0$
(где ${\rm dim}$ обозначает топологическую размерность множества),
тогда ввиду \cite[следствие~1, раздел~5, гл.~IV, с.~48]{HW}
множество $\partial f(A)$ не разбивает ${\Bbb M}^n_*.$ Последнее
означает, что множество ${\Bbb M}^n_*\setminus \partial f(A)$ всё
ещё является областью. Ввиду связности многообразия ${\Bbb M}^n_*$
точки $x_1\in f(A)$ и $x_2\in {\Bbb M}^n_*\setminus \overline{f(A)}$
могут быть соединены кривой $\gamma,$ целиком лежащей в ${\Bbb
M}^n_*\setminus\partial f(A).$ Последнее противоречит \cite[теорема
1, $\S\,46,$ п.~I]{Ku}. Значит, существует невырожденный континуум
$K\subset\partial f(A),$ что и требовалось установить.

Ввиду предложения \ref{pr2} и того, что
$d_*\left(f(x_j),\,f(x_j^{\,\prime})\right)\geqslant a>0$ для всех
$j\in {\Bbb N}$ ввиду предположения, мы получим:
 \begin{multline}\label{eq2G}
M_{\alpha}(\Gamma(f(C_j), \partial f(A),
f(A)))=M_{\alpha}(\Gamma(f(C_j),
\partial
f(A), {\Bbb M}_*^n))\geqslant\\
\geqslant M_{\alpha}(\Gamma(f(C_j), K, {\Bbb M}_*^n))\geqslant
\frac{1}{C}\cdot\frac{\min\{{\rm diam}\,f(C_j), {\rm
diam}\,K\}}{R^{1+\alpha-n}}\geqslant\delta>0\,.
 \end{multline}
Однако, равенство~\eqref{eq1G} и неравенство~\eqref{eq2G} вместе с
определением $p$-ёмкости противоречат~\eqref{eq9C}. Полученное
противоречие опровергает предположение, что $f$ не имеет предела при
$x\rightarrow x_0$ в ${\Bbb M}_*^n.$~$\Box$

 \end{proof}

 \medskip
{\it Доказательство теоремы \ref{th1}.} По лемме \ref{thOS4.1}
отображение $f$ является нижним $(p, B)$-отображением в каждой точке
$x_0\in\overline{D}$ при
$$B(x)=N(f, D)\cdot
K^{\frac{p-n+1}{n-1}}_{I, \alpha}(x, f), \alpha:=\frac{p}{p-n+1}\,$$
(т.е., $p=\frac{\alpha(n-1)}{\alpha-1}$), где внутренняя дилатация
$K_{I,\alpha}(x, f)$ отображения $f$ в точке $x$ порядка $\alpha$
определена соотношением (\ref{eq0.1.1A}), а кратность $N(f, D)$
определена вторым соотношением в (\ref{eq1.7A}). Заметим, что,
поскольку $\alpha\in (n-1, n],$ то также $p\in (n-1, n].$ Тогда
необходимое заключение вытекает из леммы \ref{lem1A}.~$\Box$

\medskip
В дальнейшем нам понадобится следующее вспомогательное утверждение
(см.~\cite[лемма~4.2]{ARS}), которое при $\alpha\ne n$ может быть
доказано по аналогии.

 \begin{propos}\label{pr1A}
{\sl\, Пусть $\alpha\geqslant 1,$ $x_0 \in {\Bbb M}^n,$
$0<r_1<r_2<{\rm dist}\,(x_0,
\partial U),$ $U$~--- некоторая нормальная окрестность точки $x_0,$ $Q\colon{\Bbb
M}^n\rightarrow [0, \infty]$ измеримая функция\/{\em,} локально
интегрируемая относительно меры $v$ в $U.$ Полагаем
\begin{equation*}
\eta_0(r)=\frac{1}{Ir^{\frac{n-1}{\alpha-1}}q_{x_0}^{\frac{1}{\alpha-1}}(r)}\,,
\end{equation*}
где $I:=I=I(x_0,r_1,r_2)=\int\limits_{r_1}^{r_2}\
\frac{dr}{r^{\frac{n-1}{\alpha-1}}q_{x_0}^{\frac{1}{\alpha-1}}(r)}$
и
$q_{x_0}(r):=\frac{1}{r^{n-1}}\int\limits_{|x-x_0|=r}Q(x)\,d{\mathcal
H}^{n-1}.$ Тогда при некоторой постоянной $C>0$
\begin{equation*}\label{eq10A}
\int\limits_{A} Q(x)\cdot \eta_0^{\alpha}(d(x, x_0))\ dv(x)\leqslant
C\cdot\int\limits_{A} Q(x)\cdot \eta^{\alpha}(d(x, x_0))\ dv(x)\,,
\end{equation*}
$A=A(x_0, r_1, r_2),$ для любой измеримой по Лебегу функции
$\eta\colon(r_1,r_2)\rightarrow [0,\infty],$ такой что
$\int\limits_{r_1}^{r_2}\eta(r)dr=1.$ }
\end{propos}
Имеет место следующая

\medskip
\begin{theorem}\label{th3}
{\, Пусть $n\geqslant 3,$ $n\geqslant p>n-1,$
$\alpha=\frac{p}{p-n+1},$ $D$ -- область в ${\Bbb M}^n,$ такая что
$\overline{D}$ -- компакт в ${\Bbb M}^n,$ многообразие ${\Bbb
M}_*^n$ связно\/{\em,} является $n$-регулярным по Альфорсу\/{\em,}
кроме того\/{\em,} в ${\Bbb M}_*^n$ выполнено $(1;
\alpha)$-неравенство Пуанкаре. Пусть, кроме того, $U^*$~---
некоторая область в ${\Bbb M}_*^n,$ гомеоморфная какой-либо
ограниченной области в ${\Bbb R}^n,$ а $B_R$ -- некоторый шар в
${\Bbb M}_*^n$ такой, что $\overline{B_R}\subset U^*,$ и
$\overline{B_R}$ -- компакт в ${\Bbb M}_*^n,$
$\overline{B_R}\ne{\Bbb M}_*^n.$ Пусть также $N(f, D)<\infty,$
$n-1<p\leqslant n,$ $x_0\in D,$ тогда каждое открытое, дискретное и
сохраняющее границу отображение $f:D\setminus\{x_0\}\rightarrow B_R$
класса $W_{loc}^{1, \varphi}(D\setminus\{x_0\})$ с конечным
искажением такое, что $C(f, x_0)\cap C(f,
\partial D)=\varnothing,$ продолжается в точку $x_0$
непрерывным образом до отображения $f:D\rightarrow \overline{B_R},$
если имеет место соотношение (\ref{eqOS3.0a}) и, кроме того,
найдётся функция $Q:D\rightarrow (0, \infty),$ такая что
$K_{I,\alpha}(x, f)\leqslant Q(x)$ при почти всех $x\in D$ и $Q\in
FMO(x_0).$
}
\end{theorem}

\medskip
\begin{proof} Достаточно показать, что
условие $Q\in FMO(x_0)$ влечёт расходимость интеграла (\ref{eq9DA}),
поскольку в этому случае необходимое заключение будет следовать из
теоремы \ref{th1}. Полагаем $0\,<\,\psi(t)\,=\,\frac
{1}{\left(t\,\log{\frac1t}\right)^{n/{\alpha}}}.$ На основании
\cite[предложение~3]{Af$_1$} для указанной функции
будем иметь, что %
$$\int\limits_{\varepsilon<d(x, x_0)<\varepsilon_0}
Q(x)\cdot\psi^{\alpha}(d(x, x_0))
 \ dv(x)\,= \int\limits_{\varepsilon<d(x, x_0)< {\varepsilon_0}}\frac{Q(x)\,
dv(x)} {\left(d(x, x_0) \log \frac{1}{d(x, x_0)}\right)^n} = $$
$$=O
\left(\log\log \frac{1}{\varepsilon}\right)$$
%\end{equation}
%
при  $\varepsilon \rightarrow 0.$
Заметим также, что при указанных выше $\varepsilon$ выполнено
$\psi(t)\geqslant \frac {1}{t\,\log{\frac1t}},$ поэтому
$I(\varepsilon,
\varepsilon_0)\,:=\,\int\limits_{\varepsilon}^{\varepsilon_0}\psi(t)\,dt\,\geqslant
\log{\frac{\log{\frac{1}
{\varepsilon}}}{\log{\frac{1}{\varepsilon_0}}}}.$ Положим
$\eta(t):=\psi(t)/I(\varepsilon, \varepsilon_0),$ тогда ввиду
предложения (\ref{pr1A}) получим, что интеграл в (\ref{eq9DA})
расходится и, значит, необходимое заключение вытекает из
теоремы~\ref{th1}.~$\Box$
\end{proof}

\medskip
\begin{corollary}\label{cor7}
Заключение теоремы~{\em\ref{th3}} имеет место, если в условиях этой
теоремы вместо предположений на функцию $Q$ потребовать, чтобы $Q\in
L_{loc}^s({\Bbb R}^n)$ при некотором $s\geqslant\frac{n}{n-\alpha},$
где $n-1\ne \alpha\ne n.$
\end{corollary}

\medskip
{\it Доказательство} следствия \ref{cor7} проводится аналогично
доказательству теоремы \ref{th4A}.~$\Box$

\section{Некоторые примеры. Аналог неравенства типа Вяйсяля}

Для простоты ограничимся вначале случаем пространства ${\Bbb R}^n.$
Для начала приведём некоторую технику, необходимую нам для подсчёта
величин, участвующих в (\ref{eq11C})--(\ref{eq41}).

\medskip
Заметим, прежде всего, что производная $\frac{\partial f}{\partial
e}(x_0)=\lim\limits_{t\rightarrow +0}\frac{f(x_0+te)-f(x_0)}{t}$
отображения $f$ по направлению $e\in {\Bbb S}^{n-1}$ в точке его
дифференцируемости $x_0$ может быть вычислена по правилу:
$\frac{\partial f}{\partial e}(x_0)=f^{\,\prime}(x_0)e.$ Таким
образом, путём прямого вычисления можно убедиться в справедливости
следующего утверждения.

\medskip
\begin{propos}\label{pr1C}
{\, Пусть отображение $f: B(0, p)\rightarrow {\Bbb R}^n$ имеет вид
$$
f(x)=\frac{x}{|x|}\rho(|x|)\,,
$$
где функция $\rho(t):(0, p)\rightarrow {\Bbb R}$ непрерывна и
дифференцируема почти всюду. Тогда $f$ также дифференцируемо почти
всюду, при этом, в каждой точке $x_0$ дифференцируемости отображения
$f$ в качестве главных векторов $e_{i_1},\ldots, e_{i_n}$ и
$\widetilde{e_{i_1}},\ldots, \widetilde{e_{i_n}}$ можно взять
$(n-1)$ линейно независимых касательных векторов к сфере $S(0, r)$ в
точке $x_0,$ где $|x_0|=r,$ и один ортогональный к ним вектор в
указанной точке.

Соответствующие главные растяжения (называемые, соответственно, {\it
касательными растяжениями} и {\it радиальным растяжением}) равны
$\lambda_{\tau}(x_0):=\lambda_{i_1}(x_0)=\ldots=\lambda_{i_{n-1}}(x_0)=\frac{\rho(r)}{r}$
и $\lambda_{r}(x_0):=\lambda_{i_n}=\rho^{\,\prime}(r),$
соответственно.}
\end{propos}

\medskip
Отметим, что для главных растяжений $\lambda_{i_k},$ $k\in 1,2,
\ldots, n,$ мы намеренно использовали двойную индексацию, поскольку,
как мы условились выше, конечную последовательность $\lambda_i,$
$i\in 1,2,\ldots, n$ мы предполагаем возрастающей по $i:$
$\lambda_1\le\lambda_2\le\ldots\le\lambda_n.$ Естественно, что в
фиксированной точке $x_0$ радиальные растяжения
$\lambda_{i_1}(x_0)=\ldots=\lambda_{i_{n-1}}(x_0)=\frac{\rho(r)}{r}$
могут быть не больше касательного растяжения
$\lambda_{i_n}=\rho^{\,\prime}(r),$ и наоборот.

\medskip
Следующее утверждение показывает, что в условиях теорем \ref{th1} и
\ref{th3} требования на функцию $Q$ нельзя, вообще говоря, заменить
условием $Q\in L^p$ ни для какого (сколь угодно большого) $p>0$ и
для любой неубывающей функции $\varphi(t).$ Для простоты рассмотрим
случай, когда $D={\Bbb B}^n,$ $n\geqslant 3.$

\medskip
\begin{theorem}\label{th3.10.1}{\,
Пусть $\varphi:[0,\infty)\rightarrow[0,\infty)$ -- произвольная
неубывающая функция. Для каждого $p\geqslant 1$ и
$n-1<\beta\leqslant n$ существуют функция $Q:{\Bbb B}^n\rightarrow
[1, \infty],$ $Q(x)\in L^p({\Bbb B}^n)$ и равномерно ограниченный
гомеоморфизм $g:{\Bbb B}^n\setminus\{0\}\rightarrow {\Bbb R}^n,$
$g\in W_{loc}^{1, \varphi}({\Bbb B}^n\setminus\{0\}),$ имеющий
конечное искажение, такой что $K_{I, \beta}(x, f)\leqslant Q(x),$
при этом, $g$ не продолжается по непрерывности в точку $x_0=0.$}
\end{theorem}

\medskip
\begin{proof} Рассмотрим следующий пример.
Зафиксируем числа $p\geqslant 1$ и $\alpha\in \left(0,
n/p(n-1)\right).$ Можно считать, что $\alpha<1$ в силу
произвольности выбора $p.$ Зададим гомеоморфизм $g:{\Bbb
B}^n\setminus\{0\}\rightarrow {\Bbb R}^n$ следующим образом:
$g(x)=\frac{1+|x|^{\alpha}}{|x|}\cdot x.$
Заметим, что отображение $g$ переводит шар $D={\Bbb B}^n$ в кольцо
$D^{\,\prime}=B(0,2)\setminus {\Bbb B}^n,$ при этом, $C(g, 0)={\Bbb
S}^{n-1}$ (отсюда вытекает, что $g$ не имеет предела в нуле).
Заметим, что $g\in C^{1}({\Bbb B}^n\setminus \{0\}),$ в частности,
$g\in W_{loc}^{1,1}.$

\medskip
Далее, в каждой точке $x\in {\Bbb B}^n\setminus \{0\}$ отображения
$g:{\Bbb B}^n\setminus \{0\}\rightarrow {\Bbb R}^n$ вычислим
внутреннюю дилатацию отображения $g$ в точке $x$ порядка $\beta,$
воспользовавшись правилом (\ref{eq41}). Поскольку $g$ имеет вид
$g(x)=\frac{x}{|x|}\rho(|x|),$ ввиду предложения \ref{pr1C},
$$\lambda_{\tau}(x)=\frac{|x|^{\alpha}+1}{|x|},\lambda_{r}(x)=\alpha|x|^{\alpha-1},
l(g^{\,\prime}(x))=\alpha|x|^{\alpha-1}\,, \Vert
g^{\,\prime}(x)\Vert=\frac{|x|^{\alpha}+1}{|x|}\,,$$
$$|J(x, g)|=
\left(\frac{|x|^{\alpha}+1}{|x|}\right)^{n-1}\cdot
\alpha|x|^{\alpha-1}$$ и
$K_{I, \beta}(x, g)=c(\alpha)\cdot\frac{(1+|x|^{\,\alpha})^{n-1}}{
|x|^{\,(\alpha-1)(\beta-1)+n-1}}.$
Заметим, что если $G$ -- произвольная компактная область в ${\Bbb
B}^n\setminus\{0\},$ то $\Vert g^{\,\prime}(x)\Vert\leqslant
c(G)<\infty,$ кроме того, нетрудно видеть, что $|\nabla
g(x)|\leqslant n^{1/2}\cdot\Vert g^{\,\prime}(x)\Vert$ при почти
всех $x\in {\Bbb B}^n\setminus\{0\}.$ Тогда ввиду неубывания функции
$\varphi$ выполнено:
$\int\limits_{G} \varphi(|\nabla
g(x)|)dm(x)\leqslant\varphi(n^{1/2}c(G))\cdot m(G)<\infty,$ т.е.,
$g\in W^{1, \varphi}(G).$
Заметим, что отображение $g$ имеет конечное искажение, поскольку его
якобиан почти всюду не равен нулю. Полагаем:
$Q=\frac{(1+|x|^{\,\alpha})^{n-1}}{
|x|^{\,(\alpha-1)(\beta-1)+n-1}},$ тогда
$Q(x)\leqslant \frac{C}{|x|^{\alpha(n-1)}}.$ Таким образом,
получаем:
$$\int\limits_{{\Bbb B}^n}\left(Q(x)\right)^p dm(x)\leqslant C^p
\int\limits_{{\Bbb B}^n}\frac{dm(x)}{|x|^{p\alpha(n-1)}}=$$
\begin{equation}\label{eq2.3A}=C^p\int\limits_0^1\int\limits_{S(0,
r)}\frac{d{\mathcal{A}}}{|x|^{p\alpha(n-1)}}\,dr=\omega_{n-1}C^p
\int\limits_0^1\frac{dr}{r^{(n-1)(p\alpha-1)}}\,.\end{equation}
Хорошо известно, что интеграл
$I:=\int\limits_0^1\frac{dr}{r^{\gamma}}$ сходится при $\gamma<1.$
Таким образом, интеграл в правой части соотношения (\ref{eq2.3A})
сходится, поскольку показатель степени $\gamma:=(n-1)(p\alpha-1)$
удовлетворяет условию $\gamma<1$ при $\alpha\in (0, n/p(n-1)).$
Отсюда вытекает, что $Q(x)\in L^p({\Bbb B}^n).$
\end{proof}

\medskip
Следующее утверждение содержит в себе заключение о том, что условие
(\ref{eq9}) является не только достаточным, но в некотором смысле и
необходимым условием возможности непрерывного продолжения
отображения в изолированную граничную точку.

\medskip
\begin{theorem}\label{th5} { Пусть $\varphi:[0,\infty)\rightarrow[0,\infty)$ -- произвольная
неубывающая функция, $n-1<\alpha\leqslant n$ и $0<\varepsilon_0<1.$
Для каждой измеримой по Лебегу функции $Q:{\Bbb B}^n\rightarrow [1,
\infty],$ $Q\in L_{loc}({\Bbb B}^n),$ такой, что
$\int\limits_{0}^{\varepsilon_0}\frac{dt}{t^{\frac{n-1}{\alpha-1}}
q_{0}^{\,\frac{1}{\alpha-1}}(t)}<\infty,$ найдётся ограниченное
отображение $f\in W_{loc}^{1, \varphi}({\Bbb B}^n\setminus\{0\})$ с
конечным искажением, которое не может быть продолжено в точку
$x_0=0$ непрерывным образом, при этом, $K_{I, \alpha}(x, f)\leqslant
\widetilde{Q}(x)$ п.в., где $\widetilde{Q}(x)$ -- некоторая
измеримая по Лебегу функция, такая что
$$\widetilde{q}_0(r):=\frac{1}{\omega_{n-1}r^{n-1}}\int\limits_{S(0,
r)}\widetilde{Q}(x)d{\mathcal H}^{n-1}=q_0(r)$$ для почти всех $r\in
(0, 1),$ где $\omega_{n-1}$ -- площадь единичной сферы в ${\Bbb
R}^n.$}
\end{theorem}

\begin{proof} Сначала рассмотрим случай $\alpha=n.$
Определим отображение $f:{\Bbb B}^n\setminus\{0\}\rightarrow {\Bbb
R}^n$ следующим образом: $f(x)=\frac{x}{|x|}\rho(|x|),$ где
$\rho(r)=\exp\left\{-\int\limits_{r}^1\frac{dt}{tq_{0}^{1/(n-1)}(t)}\right\}.$
Заметим, что $f\in ACL$ и отображение $f$ дифференцируемо почти
всюду в ${\Bbb B}^n\setminus\{0\}.$ Ввиду техники, изложенной перед
формулировкой леммы \ref{thOS4.1},
$$\Vert
f^{\,\prime}(x)\Vert=\frac{\exp\left\{-\int\limits_{|x|}^1\frac{dt}{tq_{0}^{1/(n-1)}(t)}\right\}}{|x|}\,,
l(f^{\,\prime}(x))=\frac{\exp\left\{-\int\limits_{|x|}^1\frac{dt}{tq_{0}^{1/(n-1)}(t)}\right\}}
{|x|q_{0}^{1/(n-1)}(|x|)}$$ и $|J(x, f)|=\frac{\exp\left\{-n\int
\limits_{|x|}^1\frac{dt}{tq_{0}^{1/(n-1)}(t)}\right\}}{|x|^nq_{0}^{1/(n-1)}(|x|)}.$
Заметим, что $J(x, f)\ne 0$ при почти всех $x,$ значит, $f$ --
отображение с конечным искажением. Кроме того, отметим, что
$\varphi(|\nabla f(x)|)\in L_{loc}^1({\Bbb B}^n\setminus\{0\}),$
поскольку $\Vert f^{\,\prime}(x)\Vert$ локально ограничена в ${\Bbb
B}^n\setminus\{0\},$ а $\varphi$ -- неубывающая функция. Путём
непосредственных вычислений убеждаемся, что $K_I(x, f)=q_{0}(|x|),$
см. соотношение (\ref{eq41}) для вычисления. Здесь $K_I(x, f):=K_{I,
n}(x, f).$ Полагаем $\widetilde{Q}(x):=q_{0}(|x|),$ тогда будем
иметь, что $\widetilde{q}_0(r)=q_0(r)$ для почти всех $r\in (0, 1).$
Наконец, заметим, что отображение $f$ не продолжается по
непрерывности в точку $x_0=0$ ввиду условия
$\int\limits_{0}^{\varepsilon_0}\frac{dt}{tq_{0}^{\,\frac{1}{n-1}}(t)}<\infty.$

\medskip Теперь рассмотрим случай $\alpha\in (n-1, n).$
Определим отображение $f:{\Bbb B}^n\setminus\{0\}\rightarrow {\Bbb
R}^n$ следующим образом:
$$f(x)=\frac{x}{|x|}\rho(|x|)\,,$$ где
$$\rho(|x|)=\left(1+\frac{n-\alpha}{\alpha-1}\int\limits_{|x|}^1
\frac{dt}{t^{\frac{n-1}{\alpha-1}}q_0^{\frac{1}{\alpha-1}}(t)}\right)^{\frac{\alpha-1}{\alpha-n}}\,.$$
Заметим, что $f\in ACL$ и отображение $f$ дифференцируемо почти
всюду в ${\Bbb B}^n\setminus\{0\}.$ Ввиду техники, изложенной перед
формулировкой леммы \ref{thOS4.1},
$$\Vert f^{\,\prime}(x)\Vert =\left(1+\frac{n-\alpha}{\alpha-1}\int\limits_{|x|}^1
\frac{dt}{t^{\frac{n-1}{\alpha-1}}q_0^{\frac{1}{\alpha-1}}(t)}\right)^{\frac{\alpha-1}{\alpha-n}}\cdot\frac{1}
{|x|}\,,$$
$$l(f^{\,\prime}(x))=\left(1+\frac{n-\alpha}{\alpha-1}\int\limits_{|x|}^1
\frac{dt}{t^{\frac{n-1}{\alpha-1}}q_0^{\frac{1}{\alpha-1}}(t)}\right)^{\frac{n-1}{\alpha-n}}\cdot\frac{1}
{|x|^{\frac{n-1}{\alpha-1}}q_0^{\frac{1}{\alpha-1}}(|x|)}$$
и
$$J(x, f)=\left(1+\frac{n-\alpha}{\alpha-1}\int\limits_{|x|}^1
\frac{dt}{t^{\frac{n-1}{\alpha-1}}q_0^{\frac{1}{\alpha-1}}(t)}\right)^{\frac{(n-1)\alpha}{\alpha-n}}\cdot\frac{1}
{|x|^{n-1+\frac{n-1}{\alpha-1}}q_0^{\frac{1}{\alpha-1}}(|x|)}\,.$$
Заметим, что $J(x, f)\ne 0$ при почти всех $x,$ значит, $f$ --
отображение с конечным искажением. Кроме того, отметим, что
$\varphi(|\nabla f(x)|)\in L_{loc}^1({\Bbb B}^n\setminus\{0\}),$
поскольку $\Vert f^{\,\prime}(x)\Vert$ локально ограничена в ${\Bbb
B}^n\setminus\{0\},$ а $\varphi$ -- неубывающая функция. Путём
непосредственных вычислений убеждаемся, что $K_I(x, f)=q_{0}(|x|).$
Полагаем $\widetilde{Q}(x):=q_{0}(|x|),$ тогда будем иметь, что
$\widetilde{q}_0(r)=q_0(r)$ для почти всех $r\in (0, 1).$ Наконец,
заметим, что отображение $f$ не продолжается по непрерывности в
точку $x_0=0$ ввиду условия
$\int\limits_{0}^{\varepsilon_0}\frac{dt}{t^{\frac{n-1}{\alpha-1}}q_{0}^{\,\frac{1}{\alpha-1}}(t)}<\infty.$~$\Box$
\end{proof}

\subsection{} Теперь немного поговорим о примерах кольцевых $(p, Q)$-отображений на римановых многообразиях.
Такие примеры мы дадим в предельно общей форме, т.е., мы укажем на
утверждение, позволяющее генерировать подобные примеры в некотором
виде. Прежде всего, отметим работу \cite{HP} по этому поводу, где
было установлено так называемое неравенство типа Вяйсяля на
римановых многообразиях для конформного модуля для отображений с
конечным искажением. Наша ближайшая цель -- установить подобное же
неравенство для произвольного порядка модуля $p,$ не только для
отображений  с конечным искажением, но также и для некоторых более
общих классов, где такие неравенства в принципе могут быть получены.

\medskip
Напомним несколько определений. Всюду далее мы предполагаем, что
отображение $f$ {\it сохраняет ориентацию,} т.е., топологический
индекс $\mu\,(y, f, G)>0$ для произвольной области $G\subset D$
такой, что $\overline{G}\subset D,$  и произвольного $y\in
f(G)\setminus f(\partial G),$ см., напр., \cite{Re}. Пусть
$f:D\rightarrow {\Bbb M}_*^n$ -- произвольное отображение и пусть
существует область $G\subset D$ такая, что $\overline{G}\cap
f^{\,-1}\left(f(x)\right)=\left\{x\right\}.$ Тогда величина $\mu\,
\left(f(x),\,f,\,G\right),$ называемая {\it локальным топологическим
индексом},  не зависит от выбора области $G$ и обозначается символом
$i(x,f).$ Пусть $\alpha$ и $\beta$ -- кривые в ${\Bbb M}^n,$ тогда
запись $\alpha\subset\beta$ означает, что $\alpha$ является
подкривой кривой $\beta.$

\subsection{} Рассмотрим следующее определение.
Пусть $\alpha:[a, b]\rightarrow {\Bbb M}^n$ -- некоторая кривая.
Говорят, что кривая $\alpha$ получается из кривой $\beta: [c,
d]\rightarrow {\Bbb M}^n$ посредством {\it возрастающей замены
параметра}, если найдётся возрастающая функция $h:[a, b]\rightarrow
[c, d]$ такая, что $\alpha(t)=\beta\circ h(t).$

\medskip
Следующее утверждение доказано в случае пространства ${\Bbb R}^n$ в
монографии \cite[теоремы 2.4 и 2.6]{Va}. Хотя перенесение этих
результатов на римановы многообразия не представляет проблем, ради
полноты изложения мы приведём их доказательство в тексте полностью.

\medskip
\begin{theorem}\label{th1E}
{{\bf I.} Для всякой спрямляемой кривой $\alpha:[a, b]\rightarrow
{\Bbb M}^n$ существует единственная спрямляемая кривая $\alpha^0:[0,
c]\rightarrow {\Bbb R}^n,$ называемая нормальным представлением
кривой $\alpha,$ со следующими свойствами:

(1) $\alpha$ получается из $\alpha^0$ посредством возрастающей
замены параметра.

(2) Если $l(\alpha^0|[0, t])$ означает длину кривой $\alpha^0$ на
отрезке $[0, t],$ то $l(\alpha^0|[0, t])=t$ для всякого $t\in [0,
c].$ Более того, $c=l(\alpha)$ и $\alpha(t)=\alpha^0\circ
l_{\alpha}(t)$ ($l_{\alpha}(t)$ -- функция длины кривой $\alpha,$ а
$l(\alpha)$ обозначает её длину).

\medskip
{\bf II.} Если спрямляемая кривая $\alpha$ получается из $\beta$
посредством возрастающей замены параметра, то $\alpha^0=\beta^0$}.
\end{theorem}

\begin{proof}
Докажем сначала часть {\bf I.} Прежде всего, покажем единственность
кривой, удовлетворяющей условиям (1) и (2).

Пусть $l_{\alpha}(t):=S(\alpha, [a,t])$ обозначает длину кривой
$\alpha|_{[a, t]},$ и пусть a priori известно, что кривая $\alpha^0$
удовлетворяет условиям (1) и (2). Если $\alpha=\alpha^0\circ h,$ где
$h:[a, b]\rightarrow [0, c],$ то по определению длины кривой легко
видеть что $S(\alpha, [a, t])=S(\alpha^0, [0, h(t)])=h(t)$ при
произвольном $a\leqslant t\leqslant b.$ Отсюда $h(t)=S(\alpha, [a,
t])$ -- отображение $h$ конкретно вычисляется, а значит, кривая
$\alpha^0$ единственна.

Покажем существование такой кривой $\alpha^0.$ Заметим, что условие
$l_{\alpha}(t_1)=l_{\alpha}(t_2)$ влечёт, что на отрезке $[t_1,
t_2]$ кривая $\alpha$ является постоянной, откуда вытекает
существование отображения $\alpha^{0}:[0, l(\gamma)]\rightarrow
{\Bbb M}^n$ такого, что $\alpha=\alpha^0\circ l_{\alpha}(t).$

Покажем, что $\alpha^0$ непрерывна, и что выполнено соотношение
$l(\alpha^0|[0, t])=t$ для всякого $t\in [0, l(\alpha)].$

Прежде всего, пусть точка $s_0\in [0, l(\alpha)]$ и
последовательность $s_k\in [0, l(\alpha)]$ сходится к $s_0$ при
$k\rightarrow\infty.$ Не ограничивая общности можно считать, что
$s_k<s_0$ и последовательность $s_k$ монотонно возрастает. Тогда
также $s_k=l_{\alpha}(t_k),$ $s_0=l_{\alpha}(t_0)$ для некоторой
монотонно возрастающей последовательности точек $t_k\in [a, b.]$
Тогда $t_k$ имеет предел: $t_{k}\rightarrow t_1$ при $k\rightarrow
\infty.$ Поскольку по условию $l_{\alpha}(t_k)\rightarrow
s_0=l_{\alpha}(t_0),$ то $l_{\alpha}(t_0)=l_{\alpha}(t_1).$
Следовательно, $\alpha^0(s_k)=\alpha(l_{\alpha}(t_k))\rightarrow
\alpha(l_{\alpha}(t_1))=\alpha(l_{\alpha}(t_0))=\alpha^0(s_0)$ при
$k\rightarrow\infty,$ а это и означает, что отображение $\alpha^0$
непрерывно.

Проверим соотношение $l(\alpha^0|[0, t])=t$ при каждом $t\in [0,
l(\alpha)].$ Для этого составим сумму
$\sum\limits_{k=0}^{n-1}d(\alpha^0(t_{k+1}), \alpha^0(t_{k})),$
$t_0=0,$ $t_n=t.$ Обозначим $\tau_n$ -- разбиение отрезка $[0, t]$
точками $0=t_0\leqslant t_1\leqslant\ldots\leqslant t_n=t.$ Заметим,
что для указанного разбиения найдётся соответствующее разбиение
отрезка $[a, s],$ $a\leqslant s\leqslant b,$ такое, что
$a=s_0\leqslant\ldots \leqslant s=s_n$ и $t_i=l_{\alpha}(s_i),$
$i=1,2,\ldots, n.$ Тогда, с одной стороны, по определению длины
кривой $\alpha$ на отрезке $[a, s]$
$$l(\alpha^0|[0, t])=\sup\limits_{\tau_n}
\sum\limits_{k=0}^{n-1}d(\alpha^0(t_{k+1}), \alpha^0(t_{k}))=$$
\begin{equation}\label{eq2F}=
\sup\limits_{\tau_n}\sum\limits_{k=0}^{n-1}d(\alpha(l_{\alpha}(s_{k+1})),
\alpha(l_{\alpha}(s_k)))\leqslant S(\alpha, [a, s])=t\,.
\end{equation}
С другой стороны, для всякого $\varepsilon>0$ найдётся разбиение
$\pi_n=\{a=s_0\leqslant s_1\leqslant\ldots\leqslant s_n=s\}$ отрезка
$[0, s]$ точками $s_i$ так, что
$\sum\limits_{k=0}^{n-1}d(\alpha(l_{\alpha}(s_{k+1})),
\alpha(l_{\alpha}(s_k)))\geqslant S(\alpha, [a,
s])-\varepsilon=t-\varepsilon,$ но тогда также найдётся и разбиение
$\tau_n$ -- разбиение соответствующими точками $0=t_0\leqslant
t_1\leqslant\ldots\leqslant t_n=t$ так, что
\begin{equation}\label{eq1F} l(\alpha^0|[0, t])=\sup\limits_{\tau_n}
\sum\limits_{k=0}^{n-1}d(\alpha^0(t_{k+1}),
\alpha^0(t_{k}))\geqslant t-\varepsilon\,.
\end{equation}
Из (\ref{eq2F}) и (\ref{eq1F}) следует $l(\alpha^0|[0, t])=t,$ что и
требовалось установить. Пункт {\bf I} доказан.

\medskip
Докажем теперь часть {\bf II}. Допустим, $\beta$ получается из
$\alpha$ возрастающей заменой параметра, тогда $\alpha=\beta\circ
h,$ где $h$ -- возрастающая функция. Тогда также
$\alpha=\beta^0\circ l_{\beta}\circ h,$ т.е., $\alpha$ получается из
$\beta^0$ возрастающей заменой параметра, причём как $\alpha^0,$ так
и $\beta^0$ удовлетворяют условию (2) настоящей теоремы. Ввиду
единственности $\alpha^0$ (см. пункт {\bf I}) имеем:
$\alpha^0=\beta^0,$ что и требовалось установить. Теорема
доказана.~$\Box$
\end{proof}

\medskip
\subsection{} Пусть $\Delta \subset \Bbb R$ -- открытый интервал
числовой прямой, $\gamma: \Delta\rightarrow {\Bbb R}^n$ -- локально
спрямляемая кривая. В таком случае, очевидно, существует
единственная неубывающая функция длины $l_{\gamma}:\Delta\rightarrow
\Delta_{\gamma}\subset \Bbb{R}$ с условием $l_{\gamma}(t_0)=0,$ $t_0
\in \Delta,$ такая что значение $l_{\gamma}(t)$ равно длине
подкривой $\gamma\mid_{[t_0, t]}$ кривой $\gamma,$ если $t>t_0,$ и
дине подкривой $\gamma\mid_ {[t,\,t_0]}$ со знаком $"$-$"$, если
$t<t_0,$ $t\in \Delta.$ Пусть $g:|\gamma|\rightarrow {\Bbb M}^n$ --
непрерывное отображение, где $|\gamma| = \gamma(\Delta)\subset
\Bbb{M}^n.$ Предположим, что кривая $\widetilde{\gamma}=g\circ
\gamma$ также локально спрямляема. Тогда, очевидно, существует
единственная неубывающая функция $L_{\gamma,\,g}:\,\Delta_{\gamma}
\rightarrow \Delta_{\widetilde{\gamma}}$ такая, что
$L_{\gamma,\,g}\left(l_{\gamma}\left(t\right)
 \right)\,=\,l_{\widetilde{\gamma}}\left(t\right)$ при всех
$t\in\Delta.$

\medskip
\begin{remark}\label{rem15}
Пусть $\gamma:[a, b]\rightarrow {\Bbb M}^n$ спрямляемая кривая.
Заметим, что свойства функции $L_{\gamma, f}$ между натуральными
параметрами $l_{\gamma}(t)$ и $l_{\widetilde{\gamma}}(t)$ (локально
спрямляемых) кривых $\gamma$ и $\widetilde{\gamma}$ таких, что
$\widetilde{\gamma}=f\circ\gamma,$ существенно не зависят от выбора
$t_0\in (a, b).$ В случае замкнутой кривой $\gamma$ мы будем
считать, что $t_0=a,$ поскольку при заданном $t_0\in (a, b)$
выполнено равенство $S(\gamma, [a, t])=S(\gamma, [a,
t_0])+l_{\gamma}(t).$ Пусть $I=[a,b].$ Для спрямляемой кривой
$\gamma:I\rightarrow {\Bbb M}^n,$ как и выше, определим функцию
длины $l_{\gamma}(t)$ по следующему правилу:
$l_{\gamma}(t)=S\left(\gamma, [a,t]\right),$ где $S(\gamma, [a,t])$
обозначает длину кривой $\gamma|_{[a, t]}.$
\end{remark}

\medskip
Кривая $\gamma$ называется {\it (полным) поднятием кривой
$\widetilde{\gamma}$ при отображении $f:D \rightarrow {\Bbb R}^n,$}
если $\widetilde{\gamma}=f \circ \gamma.$

\medskip Следующее определение играет важную роль при исследовании
пространственных отображений (см., напр., \cite[определение~5.2]{Va}
либо \cite[разд.~8.4]{MRSY}). Говорят, что отображение
$f:D\rightarrow {\Bbb R}^n$ принадлежит классу $ACP_p$ в области $D$
({\it абсолютно непрерывно на почти всех кривых относительно
$p$-модуля} в области $D$), пишем $f\in ACP_p,$ если для $p$-почти
всех кривых $\gamma$ в области $D$ кривая
$\widetilde{\gamma}=f\circ\gamma$ локально спрямляема и функция
длины $L_{\gamma,\,f},$ введённая выше, абсолютно непрерывна на всех
замкнутых интервалах, лежащих в $\Delta_{\gamma},$ для $p$-почти
всех кривых $\gamma$ в $D.$

\medskip
Как уже было замечено выше, свойство $ACP_p$ весьма важно при
изучении некоторых специальных проблем теории отображений. Тем не
менее, наибольшее значение имеет свойство, обратное к $ACP_p,$
которое будет введено ниже. Предположим, что $f:D\rightarrow {\Bbb
M}_*^n$ -- дискретное отображение, тогда (корректно) может быть
определена функция $L^{-1}_{\gamma,\,f}.$ В таком случае, будем
говорить, что $f$ обладает {\it свойством $ACP_p^{\,-1}$} в области
$D\subset {\Bbb M}^n,$ пишем $f\in ACP_p^{-1},$ если для $p$-почти
всех кривых $\widetilde{\gamma}\in f(D)$ каждое поднятие $\gamma$
кривой $\widetilde{\gamma}$ при отображении $f,$
$f\circ\gamma=\widetilde{\gamma},$ является локально спрямляемой
кривой и, кроме того, обратная функция $L^{-1}_{\gamma,\,f}$
абсолютно непрерывна на всех замкнутых интервалах, лежащих в
$\Delta_{\widetilde{\gamma}}$ для $p$-почти всех кривых
$\widetilde{\gamma}$ в $f(D)$ и каждого поднятия $\gamma$ кривой
$\widetilde{\gamma}=f\circ\gamma.$ При $p=n$ полагаем
$ACP^{-1}:=ACP_n^{-1}.$ Имеет место следующее утверждение.

\medskip
\begin{theorem}\label{th3.1}{
Пусть $D\subset {\Bbb M}^n,$ $I$ -- открытый, полуоткрытый или
замкнутый конечный интервал вещественной прямой, $p\ge 1,$
$f:D\rightarrow {\Bbb M}_*^n$ -- открытое дискретное
дифференцируемое почти всюду отображение, $f\in ACP_p^{-1},$
обладающее $N$ и $N^{\,-1}$--свой\-ст\-вами Лузина, $\Gamma$ --
семейство кривых в $D,$ $\Gamma^{\,\prime}$ -- семейство кривых в
${\Bbb M}_*^n$ и $\widetilde{m}$ -- натуральное число, такое что
выполнено следующее условие. Для каждой кривой $\beta\in
\Gamma^{\,\prime},$ $\beta:I\rightarrow {\Bbb M}_*^n,$ найдутся
кривые $\alpha_1,\ldots,\alpha_{\widetilde{m}}$ семейства $\Gamma$
такие что $f\circ \alpha_j\subset \beta$ для всех $j=1,\ldots,
\widetilde{m},$ и, кроме того, при каждом фиксированном $x\in D$ и
каждом фиксированном $t\in I$ равенство вида $\alpha_j(t)=x$
возможно лишь не более, чем при $i(x,f)$ индексах $j.$ Тогда
\begin{equation}\label{equa8}
M_p(\Gamma^{\,\prime} )\quad\le\quad
\frac{1}{\widetilde{m}}\quad\int\limits_D K_{I, p}(x, f)\cdot \rho^p
(x) dv(x)
\end{equation}
для любого семейства $\Gamma $ кривых $\gamma $ в D и для каждой
$\rho \in {\rm adm}\,\Gamma.$}
\end{theorem}

\medskip
Отметим, что справедливость теоремы \ref{th3.1} в ${\Bbb R}^n$ при
$p=n$ и ограниченной функции $K_I(x, f)$ была установлена Ю. Вяйсяля
в 1972 г. (см. % в \cite[теорема~3.1]{Va$_1$}, см. также
\cite[теорема~9.1 гл. II]{Ri}, \cite[теорема~8.6]{MRSY}) и
\cite[теорема~7]{HP}. %и \cite[теорема~4.1]{KO}.
Неравенство типа Вяйсяля играет важнейшую роль при исследовании
отображений; %в частности, имеет значительную роль при исследовании
%устранения изолированной особенности \cite{Va$_1$}.
для нас оно важно, скажем, тем, что как минимум при $m=1$ мы имеем
дело с примером кольцевых $(p, Q)$-отображений, приведённом в
некотором общем виде.

\subsection{} Для доказательства теоремы \ref{th3.1} нам понадобятся некоторые дополнительные
сведения, приводимые ниже. Пусть $E$ -- множество в ${\Bbb M}^n$ и
$\gamma :\Delta\rightarrow {\Bbb M}^n$ -- некоторая кривая.
Обозначим через $\gamma\cap E\,=\,\gamma\left(\Delta\right)\cap E.$
Пусть кривая $\gamma$ локально спрямляема и функция длины
$l_{\gamma}(t)$ такова, как было определено в предыдущем параграфе.
Полагаем
$$
l\left(\gamma\cap E\right):= m_1\,(E_ {\gamma}), \quad E_ {\gamma} =
l_{\gamma}\left(\gamma ^{-1}\left(E\right)\right)\,.
$$
Здесь и далее $m_1\,(A)$ обозначает длину (линейную меру Лебега)
множества $A\subset {\Bbb R}.$ Заметим, что
$$E_ {\gamma} = \gamma_0^{-1}\left(E\right)\,,$$
где $\gamma _0 :\Delta _{\gamma}\rightarrow {\Bbb M}^n$ --
натуральная параметризация кривой $\gamma,$  и что
$$l\left(\gamma\cap E\right) = \int\limits_{\gamma} \chi_E(x)\,|dx|
= \int\limits_{\Delta _{\gamma}} \chi _{E_\gamma }(s)\,dm_1(s)\,,$$
см. \cite[разд.~4, с.~8]{Va}. Следующее утверждение связывает
свойства функции длины локально спрямляемой кривой со свойствами
произвольного измеримого множества в ${\Bbb M}^n$ (см.
\cite[теорема~9.1]{MRSY}).

\medskip
\begin{propos}\label{pr8}{
Пусть $p\ge 1,$ $E$ -- множество в области $D\subset{\Bbb M}^n,$
$n\ge 2.$ Тогда множество $E$ измеримо относительно меры объёма $v$
тогда и только тогда, когда множество $\gamma\cap E$ измеримо для
$p$--почти всех кривых $\gamma$ в $D.$ Более того, $v(E)=0$ тогда и
только тогда, когда $l(\gamma\cap E)=0$ для $p$-почти всех кривых
$\gamma$ в $D.$ }
\end{propos}

\medskip
\subsection{}
Далее $I$ означает открытый, замкнутый или полуоткрытый конечный
интервал числовой оси. Следующее определение может быть найдено в
\cite[п.~5, гл.~II]{Ri}.

\medskip
\subsection{} Пусть $f:D\rightarrow {\Bbb M}_*^n$ -- дискретное
отображение, $\beta:I_0\rightarrow {\Bbb M}^n_*$ замкнутая
спрямляемая кривая и $\alpha:I\rightarrow D$ кривая такая, что
$f\circ \alpha\subset \beta.$ Если функция длины
$l_{\beta}:I_0\rightarrow [0, l(\beta)]$ постоянна на некотором
интервале $J\subset I,$ то $\beta$ постоянна на $J$ и, в силу
дискретности $f,$ кривая $\alpha$ также постоянна на $J.$
Следовательно, существует единственная кривая
$\alpha^{\,*}:l_\beta(I)\rightarrow D$ такая, что
$\alpha=\alpha^{\,*}\circ (l_\beta|_I).$ Будем говорить, что
$\alpha^{\,*}$ является {\it $f$-пред\-став\-лением кривой $\alpha$
относительно $\beta.$ } Заметим, что $f$-пред\-став\-ление
$\alpha^*$ и нормальное представление $\alpha^0,$ вообще говоря,
разные объекты.

\medskip
\subsection{} Следующее утверждение содержит в себе критерий
выполнения свой\-с\-т\-ва $ACP_p^{\,-1}$ отображения в терминах
абсолютной непрерывности соответствующих кривых (результат доказан
авторами работы).

\medskip
\begin{lemma}\label{pr11}{
Дискретное отображение $f:D\rightarrow {\Bbb M}_*^n$ области
$D\subset {\Bbb M}_*^n$ обладает $ACP_p^{\,-1}$-свой\-с\-т\-вом при
некотором $p\ge 1$ тогда и только тогда, когда кривая $\gamma^{\,*}$
является спрямляемой и абсолютно непрерывной для $p$-почти всех
замкнутых кривых $\widetilde{\gamma}=f\circ\gamma.$

Тут и далее $\gamma^{\,*}$ означает $f$-пред\-став\-ление кривой
$\gamma$ по отношению к $\widetilde{\gamma}.$}
\end{lemma}

\medskip
\begin{proof} {\it Необходимость.} Пусть $f$ обладает
$ACP_p^{\,-1}$--свой\-с\-т\-вом. Тогда, во--первых, $L_{\gamma,
f}^{\,-1}$ определена корректно для $p$--поч\-ти всех кривых
$\widetilde{\gamma}$ таких, что $\widetilde{\gamma}=f\circ\gamma.$
Во--вторых, кривая $\gamma^{\,*}$ является спрямляемой для
$p$--почти всех замкнутых кривых $\widetilde{\gamma}$ как только
$\widetilde{\gamma}=f\circ \gamma,$ поскольку
$(\gamma^{\,*})^{\,0}=\gamma^{\,0}$ (см. теорему \ref{th1E}). Кроме
того, для $p$--почти всех замкнутых кривых $\widetilde{\gamma}$ и
всех $\gamma,$ таких что $\widetilde{\gamma}=f\circ \gamma,$ мы
получаем
$$\gamma(t)=\gamma^{\,*}\circ l_{\widetilde{\gamma}}(t)=\gamma^{\,0}\circ
l_{\gamma}(t)=\gamma^{\,0}\circ L_{\gamma,
f}^{\,-1}\left(l_{\widetilde{\gamma}}(t)\right)\,.$$
Полагая $l_{\widetilde{\gamma}}(t):=s,$ мы получаем
$$\gamma^{\,*}(s)=\gamma^{\,0}\circ L_{\gamma,
f}^{\,-1}(s)\,.$$
Таким образом, кривая $\gamma^{\,*}$ абсолютно непрерывна, поскольку
$L_{\gamma, f}^{\,-1}(s)$ абсолютно непрерывна и
$$d(\gamma^{\,0}(t_1), \gamma^{\,0}(t_2))\le |t_1-t_2|$$
для всех $t_1, t_2\in [0, l(\gamma)]$).

{\it Достаточность.} Поскольку отображение $f$ дискретно, функция
$L^{\,-1}_{\gamma, f}$ корректно определена для $p$-почти всех
$\widetilde{\gamma}$ и всех $\gamma$ таких, что
$\widetilde{\gamma}=f\circ \gamma.$ Согласно предположению, кривая
$\gamma^{\,*}$ спрямляема для $p$-почти всех замкнутых кривых
$\widetilde{\gamma}=f\circ \gamma;$ в частности,
$\gamma^{\,*\,0}=\gamma^{\,0}.$ Более того, для таких кривых
$l_{\gamma^{\,*}}(s)=L_{\gamma, f}^{\,-1}(s).$

Покажем, что из условия абсолютной непрерывности кривой
$\gamma^{\,*}$ вытекает абсолютная непрерывность функции $L_{\gamma,
f}^{\,-1}(s)$ (см. аналогичное доказательство для случая
пространства ${\Bbb R}^n$ в \cite[теорема~1.3]{Va}). Исходя из
абсолютной непрерывности $\gamma^{\,*},$ для любого $\varepsilon>0$
найдём $\delta=\delta(\varepsilon)$ такое, что для любой системы
непересекающихся интервалов $\Delta_i=(\alpha_i, \beta_i)\subset [0,
l(\gamma)],$ $i=1,2,\ldots, k,$ таких, что $\sum\limits_{i=1}^k
(\beta_i-\alpha_i)<\delta,$ выполнено неравенство
$\sum\limits_{i=1}^k d(\gamma^{\,*}(\beta_i),
\gamma^{\,*}(\alpha_i))<\varepsilon,$ где $d,$ как и прежде, --
геодезическое расстояние в ${\Bbb M}^n.$ Поскольку $l(
\gamma^{\,*}|_{\Delta_i})=l_{\gamma^{\,*}}(\beta_i)-l_{\gamma^{\,*}}(\alpha_i),$
каждый интервал ${\Delta_i}$ может быть представлен в виде
${\Delta_i}=\bigcup\limits_{j}(\alpha_{ij}, \beta_{ij})$ так, что
$$\sum\limits_{j}d(\gamma^{\,*}(\alpha_{ij}), \gamma^{\,*}(\beta_{ij}))\geqslant
l_{\gamma^{\,*}}(\beta_i)-l_{\gamma^{\,*}}(\alpha_i)
-\varepsilon/k\,.$$ Следовательно,
$$\sum\limits_{i=1}^k (l_{\gamma^{\,*}}(\beta_i)-l_{\gamma^{\,*}}(\alpha_i)\leqslant
\sum\limits_{i, j}d(\gamma^{\,*}(\alpha_{ij}),
\gamma^{\,*}(\beta_{ij}))+\varepsilon<2\varepsilon\,,$$
откуда и вытекает абсолютная непрерывность функции длины
$l_{\gamma^{\,*}}(s)=L_{\gamma, f}^{\,-1}(s).$

%Розглянемо вичерпування $\left\{U_i\right\}_{i=1}^{\infty}$ області
%$f(D)$ компактними підобластями.
Пусть $\Gamma_1$ -- семейство всех замкнутых кривых
$\widetilde{\alpha}=f\circ\alpha$ в области $f(D)$ таких, что кривая
$\alpha^{\,*}$ либо не спрямляема, либо функция $L_{\alpha,
f}^{\,-1}(s)$ не абсолютно непрерывна. Пусть $\Gamma$ -- семейство
всех кривых $\widetilde{\gamma}=f\circ\gamma$ в области $f(D),$
таких что $\gamma$ либо не локально спрямляема, либо функция
$L_{\gamma, f}^{\,-1}(s)$ не локально абсолютно непрерывна. Тогда
$\Gamma>\Gamma_1$ и, следовательно, $M_p(\Gamma)\le
M_p(\Gamma_1)=0,$ что и требовалось доказать.~$\Box$
\end{proof}

\subsection{} Напомним, что отображение $\varphi:X\rightarrow Y$ между
метрическими пространствами $X$ и $Y$ называется {\it липшицевым,}
если
${\rm dist}\,\left(\varphi(x_1),\,\varphi(x_2)\right)\le M\cdot {\rm
dist}\,\left(x_1,\,x_2\right)$
при некоторой постоянной $M<\infty$ и всех $x_1, x_2 \in X.$
Говорят, что отображение $\varphi:X\rightarrow Y$ {\it является
билипшицевым,} если: во--первых, оно является липшицевым,
во--вторых,
$M^{\,*}\cdot {\rm dist}\,\left(x_1,\,x_2\right)\le{\rm
dist}\,\left(\varphi(x_1),\,\varphi(x_2)\right)$
при некоторой постоянной $M^{\,*}>0$ и всех $x_1, x_2 \in\,X.$

\medskip
Следующий результат доказан в монографии \cite[разд.~8, леммы~8.2 и
8.3]{MRSY}.

\medskip
\begin{propos}\label{lem2.4}
{ Пусть отображение $f:D\rightarrow {\Bbb R}^n$ почти всюду
дифференцируемо и обладает $N$ и $N^{\,-1}$--свой\-ст\-вами Лузина.
Тогда найдётся не более чем счётная последовательность компактных
множеств $C_k^{\,*}\subset D,$ такая что $m(B)=0,$ где $B=D\setminus
\bigcup\limits_{k=1}^{\infty} C_k^{\,*}$ и $f|_{C_k^{\,*}}$ взаимно
однозначно и билипшицево для каждого $k=1,2,\ldots .$ Более того,
$f$ дифференцируемо при всех $x\in C_k^{\,*}$ и выполнено условие
$J(x,f)\ne 0.$}
\end{propos}

\medskip
\subsection{} {\sc Доказательство теоремы \ref{th3.1}.} Пусть множества
$B$ и $C_{k}^{\,*}$ -- множества в области $D$ на многообразии
${\Bbb M}^n,$ соответствующие предложению \ref{lem2.4}. Полагаем
$B_0=B,$ $B_1\,=\,C_{1}^{\,*},$ $B_2\,=\,C_{2}^{\,*}\setminus
B_1\ldots\,,$
$$B_k\quad=\quad C_{k}^{\,*}\setminus \bigcup\limits_{l=1}^{k-1}B_l\,.$$
Таким образом, мы получим не более, чем счётное покрытие области $D$
борелевскими множествами $B_k,$ $k=0,1,\ldots,$ причём $B_l\cap
B_j=\varnothing$ при $l\ne j$ и $m(B_0)=0,$ где $B_0=D\setminus
\bigcup\limits_{k=1}^{\infty} B_k.$ Поскольку отображение $f$
обладает $N$--свойством, получаем $m(f(B_0))=0.$ По предложению
\ref{pr8} $l\left(\overline{\gamma}\cap f(B_0)\right)=0$ для
$p$--п.в. кривых $\overline{{\gamma}}$ в области $f(D).$
Следовательно, $\widetilde{\gamma}^{\,0}(s)\not\in f(B_0)$ для
$p$--почти всех замкнутых кривых $\widetilde{\gamma}$ в области
$f(D)$ и почти всех $s\in [0, l(\widetilde{\gamma})];$ здесь
$\widetilde{\gamma}^{\,0}(s)$ обозначает нормальное представление
кривой $\widetilde{\gamma}(s).$ Кроме того, по лемме \ref{pr11}
кривая $\gamma^{\,*},$ являющаяся $f$-пред\-став\-лением кривой
$\gamma,$ абсолютно непрерывна для $p$--почти всех замкнутых кривых
$\widetilde{\gamma}=f\circ\gamma.$ Здесь $f$-пред\-став\-ление
$\gamma^{\,*}$ кривой $\gamma$ корректно определено для $p$--почти
всех кривых $\widetilde{\gamma}=f\circ\gamma,$ поскольку по
предположению $f$ -- дискретное отображение.

Пусть теперь $\Gamma_1$ -- семейство всех кривых $\gamma_1$ в ${\Bbb
M}_*^n,$ для которых существует спрямляемая замкнутая подкривая
$\beta_1,$ $\beta_1=f\circ\alpha_1,$ такая что
$f$-пред\-став\-ле\-ние $\alpha_1^{\,*}$ кривой $\alpha_1$ либо не
является спрямляемой кривой, либо не является абсолютно непрерывной
кривой. Обозначим через $\Gamma_2$ семейство всех замкнутых
спрямляемых кривых $\beta_2,$ $\beta_2=f\circ\alpha_2,$ таких что
$f$-пред\-став\-ле\-ние $\alpha_2^{\,*}$ кривой $\alpha_2$ либо не
является спрямляемой кривой, либо не является абсолютно непрерывной
кривой. По доказанному выше $M_p(\Gamma_2)=0.$ С другой стороны,
$\Gamma_1>\Gamma_2$ и, следовательно, $M_p(\Gamma_1)\le
M_p(\Gamma_2)=0.$ Наконец, пусть $\Gamma_3$ -- семейство всех
локально спрямляемых кривых в ${\Bbb M}^n_*,$ для которых найдётся
подкривая $\beta_3,$ $\beta_3=f\circ\alpha_3,$ такая что
$f$-пред\-став\-ле\-ние $\alpha_3^{\,*}$ кривой $\alpha_3$  либо не
является локально спрямляемой кривой, либо не является локально
абсолютно непрерывной кривой. Тогда $\Gamma_3>\Gamma_1$ и,
следовательно, $M_p(\Gamma_3)\le M_p(\Gamma_1)=0.$

Пусть $\rho\in {\rm adm}\,\Gamma.$ Полагаем
$$\rho^*(x)\,=\,\left\{\begin{array}{rr}
\rho(x)/l(x, f), &   x\in D\setminus B_0,\\
0,  &  x\in B_0\,,
\end{array}
\right.$$ где, как и раньше, $l(x, f)$ определено вторым
соотношением в (\ref{eq1H}).
Рассмотрим следующую функцию:
\begin{equation}\label{equa9}
\widetilde{\rho}(y)\quad=\quad\frac{1}{\widetilde{m}}\cdot
\chi_{f\left(D\setminus B_0
\right)}(y)\sup\limits_{C}\,\sum\limits_{x\,\in\,C}\rho^{\,*}(x)\,,
\end{equation}
где $C$ пробегает все подмножества $f^{-1}(y)$ в $D\setminus B_0,$
количество элементов которых не больше $\widetilde{m}.$ Заметим, что
\begin{equation}\label{equ5}
\widetilde{\rho}(y)\quad=\quad\frac{1}{\widetilde{m}}\cdot
\sup\sum\limits_{i=1}^s \rho_{k_i}(y)\,,
\end{equation}
где $\sup$ в (\ref{equ5}) берётся по всем возможным наборам
$\left\{k_{i_1},\ldots,k_{i_s}\right\}$ таким, что $i\in {\Bbb N},$
$k_i\in\,{\Bbb N},$ $k_i\ne k_j$ при $i\ne j,$ всех $s\le
\widetilde{m}$ и
$$
\rho_k(y)\,=\left\{\begin{array}{rr}
\,\rho^{*}\left(f_k^{-1}(y)\right), &   y\in f(B_k),\\
0,  &  y\notin f(B_k)\,,
\end{array}
\right.$$
а каждое из отображений $f_k=f|_{B_k},$ $k=1,2,\ldots,$ инъективно.
Из (\ref{equ5}) следует, что функция $\widetilde{\rho}(y)$ является
борелевской, поскольку множества $f(B_k)$ борелевские, см.
\cite[разд.~2.3.2]{Fe}. Пусть $\beta$ -- произвольная кривая
семейства $\Gamma^{\,\prime}.$ По условию найдутся кривые
$\alpha_1,\ldots,\alpha_{\widetilde{m}}$ семейства $\Gamma,$ такие,
что $f\circ \alpha_j\subset \beta$ для всех $j=1,2,\ldots,
\widetilde{m},$ и при каждом фиксированном $x\in D$ и $t\in I$
равенство $\alpha_j(t)=x$ справедливо не более, чем при $i(x,f)$
индексах $j.$

Покажем, что  функция $\widetilde{\rho}\,\in\,{\rm }\,\,{\rm
adm}\,\Gamma^{\,\prime}\setminus \Gamma_0,$ где $M_p(\Gamma_0)=0.$
Без ограничения общности можно считать, что все кривые $\beta$
семейства $\Gamma^{\,\prime}$ локально спрямляемы (см.
\cite[разд.~6, с.~18]{Va}). Пусть $\beta$ -- замкнутая кривая и
$\beta^0:[0, l(\beta)]\rightarrow {\Bbb M}_*^n$ -- нормальное
представление кривой $\beta,$ $\beta(t)=\beta^0\circ l_{\beta}(t).$
Обозначим символами $\alpha_j^{\,*}(s):I_j\rightarrow D$
соответствующие $f$-пред\-став\-ле\-ния кривых $\alpha_j$
относительно кривой $\beta,$ т.е. $\alpha_j(t)=\alpha^*_j\circ
l_{\beta}(t),$ $t\in I_j,$ $f\circ \alpha^*_j\subset \beta^0.$
Обозначим
$$h_j(s)=\rho^{\,*}\left(\alpha^*_j(s)\right)\chi_{I_j}(s)\,,\quad s\in [0, l(\beta)]\,,\quad
J_s:=\{j:s\in I_j\}\,.$$
Поскольку по предположению $\beta^0(s)\not\in f(B_0)$ при почти всех
$s\in [0, l(\beta)],$ при этих же $s$ точки $\alpha^{\,*}_j(s)\in
f^{\,-1}(\beta^0(s)),$ $j\in J_s,$ являются различными при различных
$j$ ввиду условия: $\alpha_j(s)=x$ не более, чем при $i(x, f)$
индексах $j.$ Тогда, по определению функции $\widetilde{\rho}$ в
(\ref{equa9}), при почти всех $s\in [0, l(\beta)]$
\begin{equation}\label{eq6.1}
\widetilde{\rho}(\beta^0(s))\ge
\frac{1}{\widetilde{m}}\cdot\sum\limits_{j=1}^{\widetilde{m}}
h_j(s)\,.
\end{equation}
Из соотношения (\ref{eq6.1}) получаем
$$\int\limits_{\beta}\widetilde{\rho}(y)\,|dy|=\int\limits_{0}^{l(\beta)}
\widetilde{\rho}(\beta^0(s))\,ds\ge
$$
\begin{equation}\label{eq6.2}
\ge \frac{1}{\widetilde{m}}\cdot\sum\limits_{j=1}^{\widetilde{m}}
\int\limits_{0}^{l(\beta)}h_j(s)
ds=\frac{1}{\widetilde{m}}\cdot\sum\limits_{j=1}^{\widetilde{m}}
\int\limits_{I_j} \rho^{\,*}\left(\alpha^*_j(s)\right)dm_1(s)\,.
\end{equation}
Осталось показать, что
\begin{equation}\label{eq6.4}
\int\limits_{I_j}\rho^{\,*}\left(\alpha^*_j(s)\right)dm_1(s)\ge 1
\end{equation}
для $p$--почти всех кривых $\beta\in \Gamma^{\,\prime}.$ Поскольку,
согласно сделанному выше предположению, кривая $\beta^{\,0}(s)$
локально спрямляема, $\beta^{\,0}(s)$ дифференцируема почти всюду по
$s\in I,$ что вытекает, например, из того, что $\beta^0$ локально
липшицева относительно собственного натурального параметра в
локальных координатах. Кроме того, согласно сказанному выше, кривая
$\alpha_j^{\,*}$ из $f$-пред\-став\-ления кривой $\beta$ является
локально абсолютно непрерывной. Заметим также, что из условия
$\beta^0(s)\not\in f(B_0)$ при почти всех $s\in [0, l(\beta)]$
вытекает, что $\alpha_j^{\,*}(s)\not\in B_0$ при почти всех $s\in
I_j$ и, следовательно, производные
$f^{\,\prime}\left(\alpha_j^{\,*}(s)\right)$ и
$\alpha_j^{\,*\prime}(s),$ понимаемые в локальных координатах,
существуют при почти всех $s\in I_j.$ Учитывая сказанное выше и
используя формулу для производной суперпозиции функций, а также тот
факт, что модуль производной по натуральному параметру в локальных
координатах равен единице (см. \cite[пункт~(5), теорема~1.3]{Va}),
получаем следующее соотношение:
$$1=\left|\left(f\circ \alpha_j^{\,*}\right)^{\,\prime}(s)\right|=
\left|f^{\,\prime}\left(\alpha_j^{\,*}(s)\right)\alpha_j^{\,*\prime}(s)\right|=$$
\begin{equation}\label{eq6.6}
=\left|f^{\,\prime}\left(\alpha_j^{\,*}(s)\right)\cdot\frac{\alpha_j^{\,*\prime}
(s)}{|\alpha_j^{\,*\prime}(s)|}\right|\cdot
|\alpha_j^{\,*\prime}(s)|\ge l\left(\alpha_j^{\,*}(s), f\right)\cdot
|\alpha_j^{\,*\prime}(s)|\,.\end{equation}
Из (\ref{eq6.6}) следует, что
\begin{equation}\label{eq6.5}
\rho^{\,*}\left(\alpha^*_j(s)\right)=\frac{\rho(\alpha^*_j(s))}
{l\left(\alpha_j^{\,*}(s), f\right)}\ge \rho(\alpha^*_j(s))\cdot
|\alpha_j^{\,*\prime}(s)|\,.
\end{equation}
Учитывая абсолютную непрерывность кривой $\alpha_j^{\,*},$ из
(\ref{eq6.5}), по определению функции $\rho$ получаем
\begin{equation}\label{eq6.7}
1\le
\int\limits_{\alpha_j}\rho(x)|dx|=\int\limits_{I_j}\rho\left(\alpha^*_j(s)\right)
\cdot |\alpha_j^{\,*\prime}(s)|dm_1(s)\le
\int\limits_{I_j}\rho^{\,*}\left(\alpha^*_j(s)\right)dm_1(s)\,.
\end{equation}
Из (\ref{eq6.7}) вытекает соотношение (\ref{eq6.4}). Общий случай,
когда $\beta$ не обязательно замкнутая кривая, получается взятием
$\sup$ по всем замкнутым подкривым $\beta^{\,\prime}$ кривой $\beta$
в выражении $\int\limits_{\beta^{\,\prime}}\widetilde{\rho}(y)|dy|.$
Следовательно, функция  $\widetilde{\rho}\,\in\,{\rm }\,\,{\rm
adm}\,\Gamma^{\,\prime}\setminus \Gamma_0,$ где $M_p(\Gamma_0)=0$ и,
значит,
\begin{equation}\label{equ6*}
M_p\left(\Gamma^{\,\prime}\right)\quad\le\quad\int\limits_{f(D)}\widetilde{\rho}\,^p(y)\,\,dv(y)\,.
\end{equation}
Ввиду \cite[лемма~8]{HP} получаем
\begin{equation}\label{equ7*}
\int\limits_{B_k}K_{I,
p}(x,\,f)\cdot\rho^p(x)\,\,dv(x)\quad=\quad\int\limits_{f(D)}
\rho^p_k(y)\,dv(y)\,.
\end{equation}
Заметим также, что по неравенству Гёльдера для сумм
\begin{equation}\label{equ8*}
\left(\frac{1}{\widetilde{m}}\cdot\sum\limits_{i=1}^{s}\rho_{k_i}(y)\right)^p\quad\le\quad
\frac{1}{\widetilde{m}}\cdot \sum\limits_{i=1}^{s}\,\rho^p_{k_i}(y)
\end{equation}
для произвольного $1 \le s \le \widetilde{m}$ и любого набора
$\left\{k_1,\ldots,k_s\right\}$ длины $s,$ $1\le i\le s,$ $k_i\in
{\Bbb N},$ $k_i\ne k_j,$ если $i\ne j.$ Тогда по теореме об
аддитивности интеграла Лебега, см., напр., \cite[теорема~12.3,
$\S\,12,$ разд.~I]{Sa}, из (\ref{equ6*}), (\ref{equ7*}) и
(\ref{equ8*}) получаем, что
%
%\begin{equation}\label{equa15}
$$\frac{1}{\widetilde{m}}\cdot\int\limits_{D}K_{I, p}(x,\,f)\cdot\rho^p(x)\,\,dv(x)\quad
=\quad\frac{1}{\widetilde{m}}\cdot\int\limits_{f\left(D\right)}\,\sum\limits_{k=1}^{\infty}
\rho_k^p(y)\,dv(y)\quad\ge$$
%
%\end{equation}
%
$$\ge\quad\frac{1}{\widetilde{m}}\cdot\int\limits_{f\left(D\right)}
\sup\limits_{\left\{k_1,\ldots,k_s\right\},\, k_i\in {\Bbb N},\atop
k_i\ne k_j, \,\, i\ne j}\sum\limits_{i=1}^s
\rho^p_{k_i}(y)\,dv(y)\quad\ge\quad
\int\limits_{f\left(D\right)}\,\widetilde{\rho}^{\,p}(y)\,dv(y)\quad\ge$$
$$\ge M_p(\Gamma^{\,\prime})\,.$$
Теорема доказана.~$\Box$

\medskip
\begin{corollary}\label{cor8}{
Пусть $D\subset {\Bbb M}^n,$ $p\ge 1,$ $f:D\rightarrow {\Bbb M}_*^n$
-- открытое дискретное дифференцируемое почти всюду отображение,
$f\in ACP_p^{-1},$ обладающее $N$ и $N^{\,-1}$--свой\-ст\-вами
Лузина. Тогда $f$ является кольцевым $(p, Q)$-отображением в каждой
точке $x_0\in D$ при  $Q:=K_{I, p}(x, f).$}
\end{corollary}

\section{Об открытости и дискретности отображений, удовлетворяющих <<обратному>> модульному неравенству}

\medskip
До сих пор мы предполагали рассматриваемые нами отображения
открытыми и дискретными, не задаваясь вопросом, вытекают ли условия
открытости и дискретности из определяющего соотношения
(\ref{eq3*!!}). Необходимо отметить существование довольно простых
примеров отображений таких, как отражение относительно фиксированной
гиперплоскости, которые показывают, что ответ на поставленный
вопрос, вообще говоря отрицательный. Если же наложить дополнительное
требование сохранения ориентации отображением в заданной области, то
этот ответ кажется уже совсем не очевидным.

\medskip
По-видимому, упомянутая проблема не решена и по сей день даже в
пространстве ${\Bbb R}^n,$ и даже в случае ограниченных функций.
(Отметим, что отображения с ограниченным искажением, как известно,
открыты и дискретны, см. напр., \cite[теоремы~6.3 и 6.4,
гл.~II]{Re}). Тем не менее, нам удалось установить следующий, на наш
взгляд, важный результат: {\it если сохраняющее ориентацию
отображение на многоообразии удовлетворяет неравенству,
<<об\-рат\-но\-му>> к (\ref{eq3*!!}), а функция $Q$ удовлетворяет
одному из участвующих ранее условий, то оно является открытым и
дискретным}. Ниже мы уточним смысл слов <<обратное неравенство>>, а
также докажем указанное утверждение в наиболее общих предположениях
на $Q.$

\medskip
Если $f:D\rightarrow {\Bbb M}_*^n$ -- заданное отображение, то для
фиксированного $y_0\in f(D)$ и произвольных $0<r_1<r_2<\infty$
обозначим через $\Gamma(y_0, r_1, r_2)$ семейство всех кривых
$\gamma$ в области $D$ таких, что $f(\gamma)\in \Gamma(S(y_0, r_1),
S(y_0, r_2), A(y_0,r_1,r_2)).$ Рассмотрим неравенство
\begin{equation} \label{eq2*A}
M_p(\Gamma(y_0, r_1, r_2))\leqslant \int\limits_{f(D)} Q(y)\cdot
\eta^p (y) dv(y)
\end{equation}
выполненное для любой неотрицательной измеримой по Лебегу функции
$\eta: (r_1,r_2)\rightarrow [0,\infty ]$ такой, что
\begin{equation}\label{eqA2}
\int\limits_{r_1}^{r_2}\eta(r) dr\geqslant 1\,.
\end{equation}

\medskip
Основной вопрос, исследующийся в настоящем разделе, следующий:

\medskip
{\it что можно сказать наличии свойств открытости и дискретности
отображения $f,$ удовлетворяющего оценке вида (\ref{eq2*A}) при
некотором $n-1<p\leqslant n$ ?}

\medskip
Перед формулировкой основного результата напомним ещё несколько
определений. Множество $H\subset {\Bbb M}^n$ будем называть {\it
всюду разрывным}, если любая его компонента связности вырождается в
точку; в этом случае пишем ${\rm dim\,}H=0,$ где ${\rm dim}$
обозначает {\it топологическую размерность} множества $H,$ см.
\cite{HW}. Отображение $f:D\rightarrow {\Bbb M}^n$ называется {\it
нульмерным}, если ${\rm dim\,}\{f^{\,-1}(y)\}=0$ для каждого $y\in
\overline{\Bbb M}^n.$ Пусть ${\Bbb M}^n$ и ${\Bbb M}_*^n$ --
ориентируемые римановы многообразия. Отображение $f:{\Bbb
M}^n\rightarrow {\Bbb M}_*^n$ называется {\it сохраняющим
ориентацию} в точке $p\in {\Bbb M}^n,$ если компонента связности $K$
множества $f^{\,-1}(f(p)),$ содержащая точку $p,$ является
компактом, кроме того, существует открытое множество $W,$ содержащее
$K,$ такое что если $U\subset W$ и $U$ -- открыто,
$f^{\,-1}(f(p))\cap \partial U=\varnothing,$ то $\mu(f(p), f, G)>0.$
Отображение $f:{\Bbb M}^n\rightarrow {\Bbb M}_*^n$ называется {\it
локально сохраняющим ориентацию на ${\Bbb M}^n,$} если $f$ сохраняет
ориентацию в каждой точке (см., напр., \cite{TY}). Имеет место
следующая

\medskip
\begin{theorem}\label{th3A}{\,Пусть ${\Bbb M}^n$ и ${\Bbb M}_*^n$ --
ориентируемые римановы многообразия, $D\subset {\Bbb M}^n,$ ${\Bbb
M}^n$ является связным $n$-регулярным по Альфорсу пространством, в
котором выполнено $(1; p)$-неравенство Пуанкаре, $n-1\leqslant
p\leqslant n,$ $f:D\,\rightarrow\,{\Bbb M}_*^n$ -- локально
сохраняющее ориентацию отображение, $Q:{\Bbb M}^n\rightarrow (0,
\infty)$ -- измеримая относительно меры объёма $v$ функция.
Предположим, что отображение $f$ удовлетворяет соотношению вида
(\ref{eq2*A}) в каждой точке $y_0\in f(D)$ при любой неотрицательной
измеримой по Лебегу функции $\eta: (r_1,r_2)\rightarrow [0,\infty
],$ удовлетворяющей условию (\ref{eqA2}). Пусть функция $Q(y),$
кроме того, удовлетворяет хотя бы одному из следующих условий:

1) $Q\in FMO(y_0)$ в произвольной точке $y_0\in f(D),$

2) $q_{y_0}(r)\,=\,O\left(\left[\log{\frac1r}\right]^{n-1}\right)$
при $r\rightarrow 0$ и при всех $y_0\in f(D),$ где функция
$q_{y_0}(r)$ определена равенством (\ref{eq32}),

3) для каждого $y_0\in f(D)$ найдётся некоторое число
$\delta(y_0)>0,$ такое что при достаточно малых $\varepsilon>0$
\begin{equation}\label{eq5**}
\int\limits_{\varepsilon}^{\delta(y_0)}\frac{dt}{t^{\frac{n-1}{p-1}}q_{y_0}^{\frac{1}{p-1}}(t)}<\infty,
\qquad
\int\limits_{0}^{\delta(y_0)}\frac{dt}{t^{\frac{n-1}{p-1}}q_{y_0}^{\frac{1}{p-1}}(t)}=\infty\,.
\end{equation}
Тогда отображение $f$ открыто и дискретно.}
\end{theorem}

\medskip
Ключом к доказательству теоремы \ref{th3A} является следующая лемма.

\medskip
\begin{lemma}\label{lem1B}
{\,Пусть ${\Bbb M}^n$ и ${\Bbb M}_*^n$ -- ориентируемые римановы
многообразия, $D\subset {\Bbb M}^n,$ ${\Bbb M}^n$ является связным
$n$-регулярным по Альфорсу пространством, в котором выполнено $(1;
p)$-неравенство Пуанкаре, $n-1\leqslant p\leqslant n,$
$f:D\,\rightarrow\,{\Bbb M}_*^n$ -- локально сохраняющее ориентацию
отображение, $Q:{\Bbb M}^n\rightarrow (0, \infty)$ -- измеримая
относительно меры объёма функция. Предположим, что отображение $f$
удовлетворяет соотношению вида (\ref{eq2*A}) в каждой точке $y_0\in
f(D)$ для любой неотрицательной измеримой по Лебегу функции $\eta:
(r_1,r_2)\rightarrow [0,\infty ],$ удовлетворяющей условию
(\ref{eqA2}). Далее, предположим, что для каждого $y_0\in D$
найдётся $\varepsilon_0>0,$ для которого выполнено соотношение
\begin{equation} \label{eq4!A}
\int\limits_{A(y_0,\varepsilon, \varepsilon_0)}Q(y)\cdot\psi^p(d(y,
y_0)) \ dv(y)\,=\,o\left(I^p(\varepsilon, \varepsilon_0)\right)
\end{equation}
для некоторой измеримой по Лебегу функции
$\psi(t):(0,\infty)\rightarrow [0,\infty],$ такой что
\begin{equation} \label{eq5B}
0< I(\varepsilon,
\varepsilon_0):=\int\limits_{\varepsilon}^{\varepsilon_0}\psi(t)dt <
\infty
\end{equation}
при всех $\varepsilon\in(0,\varepsilon_0),$ где $A(y_0,\varepsilon,
\varepsilon_0)$ определено в (\ref{eq2I}) при $r_1=\varepsilon,$
$r_2=\varepsilon_0.$  Тогда отображение $f$ открыто и дискретно.}
\end{lemma}

\medskip
\begin{proof}
Достаточно рассмотреть случай, когда $f$ задано в шаре $B(x_0, R),$
лежащем в некоторой нормальной окрестности $U$ фиксированной точки
$x_0.$ Поскольку произвольное локально сохраняющее ориентацию
нульмерное отображение $f:D\rightarrow {\Bbb M}_*^n$ является
открытым и дискретным в области $D,$ см., напр., \cite[следствие на
стр.~333]{TY}, для справедливости заключения леммы нам достаточно
показать, что $f$ -- нульмерное отображение. Предположим противное.
Тогда найдётся $y_0\in {\Bbb M}_*^n,$ такое что множество
$\{f^{\,-1}(y_0)\}$ не является всюду разрывным. Следовательно, по
определению, существует континуум $C\subset \{f^{\,-1}(y_0)\}.$
Заметим, что, поскольку отображение $f$ сохраняет ориентацию,
$f\not\equiv y_0.$ Отсюда, по теореме о сохранении знака найдётся
$x_0\in D$ и $\delta_0>0:$ $\overline{B(x_0, \delta_0)}\subset D$ и
\begin{equation}\label{eq7*}
f(x)\ne y_0\qquad %\text{при\,\, всех}
\forall\quad x\in \overline{B(x_0, \delta_0)}\,.
\end{equation}
В силу предложения \ref{pr2}
\begin{equation}\label{eq4*A}
M_p\left(\Gamma\left(C, \overline{B(x_0, \delta_0)}, {\Bbb
M}^n\right)\right)>0\,.
\end{equation}
Заметим, что в силу неравенства (\ref{eq7*}) и в виду соотношения
$f(C)=\{y_0\},$ ни одна из кривых семейства
$\Delta=f\left(\Gamma\left(C, \overline{B(x_0, \delta_0)}, {\Bbb
M}^n\right)\right)$ не вырождается в точку. В то же время, все
кривые указанного выше семейства $\Delta$ имеют одним из своих
концов точку $y_0.$ Пусть $\Gamma_i$ -- семейство кривых
$\alpha_i(t):(0,1)\rightarrow {\Bbb M}^n$ таких, что $\alpha_i(1)\in
S(y_0, r_i),$ $r_i<\varepsilon_0,$ $r_i$ -- некоторая строго
положительная вещественная последовательность, такая что
$r_i\rightarrow 0$ при $i\rightarrow \infty$ и
$\alpha_i(t)\rightarrow y_0$ при $t\rightarrow 0.$  Тогда мы вправе
записать:
\begin{equation}\label{eq12*A}
\Gamma\left(C, \overline{B(x_0, \delta_0)}, {\Bbb M}^n\right) =
\bigcup\limits_{i=1}^\infty\,\, \Gamma_i^*\,,
\end{equation}
где $\Gamma_i^*$ -- подсемейство всех кривых $\gamma$ из
$\Gamma\left(C, \overline{B(x_0, \delta_0)}, {\Bbb M}^n\right),$
таких что $f(\gamma)$ имеет подкривую в $\Gamma_i.$ Заметим, что для
произвольного $\varepsilon\in (0, r_i)$
\begin{equation}\label{eq8*}
\Gamma_i^*>\Gamma(y_0, \varepsilon, r_i)\,.
\end{equation}
Рассмотрим следующую функцию
$$\eta_{i,\varepsilon}(t)=\left\{
\begin{array}{rr}
\psi(t)/I(\varepsilon, r_i), & t\in (\varepsilon, r_i)\,,\\
0,  &  t\not\in (\varepsilon, r_i)\,,
\end{array}
\right. $$
где $I(\varepsilon, r_i)=\int\limits_{\varepsilon}^{r_i}\,\psi (t)
dt.$ Заметим, что
$\int\limits_{\varepsilon}^{r_i}\eta_{i,\varepsilon}(t)dt=1,$
поэтому мы можем воспользоваться неравенством вида (\ref{eq2*A}).
Исходя из этого неравенства, ввиду соотношения (\ref{eq8*}),
получаем:
\begin{equation}\label{eq11*A}
M_p(\Gamma_i^*)\leqslant M_p(\Gamma(y_0, \varepsilon, r_i))\leqslant
\int\limits_{A(y_0,\varepsilon, \varepsilon_0)} Q(y)\cdot
\eta^p_{i,\varepsilon}(d(y, y_0))dv(y)\,\leqslant {\frak
F}_i(\varepsilon),
\end{equation}
где
${\frak F}_i(\varepsilon)=\,\frac{1}{{I(\varepsilon,
r_i)}^p}\int\limits_{A(y_0,\varepsilon,
\varepsilon_0)}\,Q(y)\,\psi^{p}(d(y, y_0))\,dv(y)$ и $I(\varepsilon,
r_i)=\int\limits_{\varepsilon}^{r_i}\,\psi (t) dt.$ Учитывая
(\ref{eq4!A}), имеем следующее соотношение:
$$\int\limits_{A(y_0,\varepsilon, \varepsilon_0)}\,Q(y)\,\psi^{p}(d(y, y_0))\,dv(y)\,=\,
G(\varepsilon)\cdot\left(\int\limits_{\varepsilon}^{\varepsilon_0}\,\psi
(t) dt\right)^p\,,$$
где $G(\varepsilon)\rightarrow 0$ при $\varepsilon\rightarrow 0$ по
условию леммы. Заметим, что
${\frak F}_i(\varepsilon)\,=\,G(\varepsilon)\cdot\left(1\,+\,\frac{
\int\limits_{r_i}^{\varepsilon_0}\,\psi(t)
dt}{\int\limits_{\varepsilon}^{r_i}\,\psi(t) dt}\right)^p,$
где $\int\limits_{r_i}^{\varepsilon_0}\,\psi(t) dt<\infty$ --
фиксированное число, а $\int\limits_{\varepsilon}^{r_i}\,\psi(t)
dt\rightarrow \infty$ при $\varepsilon\rightarrow 0,$ поскольку
величина интеграла слева в (\ref{eq4!A}) увеличивается при
уменьшении $\varepsilon.$ Таким образом, ${\frak
F}_i(\varepsilon)\rightarrow 0.$ Переходя к пределу в неравенстве
(\ref{eq11*A}) при $\varepsilon\rightarrow 0,$ левая часть которого
не зависит от $\varepsilon,$ получаем, что $M_p(\Gamma_i^*)=0$ при
любом натуральном $i.$ Однако, тогда $M_p\left(\Gamma\left(C,
\overline{B(x_0, \delta_0)}, {\Bbb M}^n\right)\right) =0$ в виду
соотношения (\ref{eq12*A}) и того, что
$M_p\left(\bigcup\limits_{i=1}^{\infty}\Gamma_i\right)\leqslant
\sum\limits_{i=1}^{\infty}M_p(\Gamma_i)$ (см. \ref{eq29*}), что
противоречит неравенству (\ref{eq4*A}). Полученное противоречие
доказывает, что отображение $f$ является нульмерным, а значит, по
\cite[следствие, с.~333]{TY}, отображение $f$ открыто и дискретно,
что и требовалось доказать.~$\Box$
\end{proof}

\medskip
\subsection{}{\it Доказательство теоремы \ref{th3A}} непосредственно
вытекает из леммы \ref{lem1B} и рассуждений, аналогичных
рассуждениям, сделанных при доказательстве теорем \ref{th4} и
\ref{th1A}.~$\Box$

\subsection{}
Из леммы \ref{lem1B} получаем также следующие утверждение.

\medskip
\begin{corollary}\label{cor3}
{\, Пусть $p\in (n-1, n),$ тогда в условиях теоремы \ref{th3A}
отображение $f$ является открытым и дискретным, если функция $Q$
удовлетворяет условию $Q\in L_{loc}^s({\Bbb M}^n)$ при некотором
$s\geqslant\frac{n}{n-p}.$ }
\end{corollary}

\medskip
\begin{proof}
Применяя лемму \ref{lem1B}, а также рассуждения, приведённые при
доказательстве теоремы \ref{th4A}, получаем необходимое
заключение..~$\Box$
\end{proof}

\medskip
\subsection{} Теперь приведём пример класса отображений, удовлетворяющих
неравенствам вида (\ref{eq2*A}). Перед этим, однако, приведём ещё
несколько вспомогательных утверждений, доказывающихся аналогично
случаю ${\Bbb R}^n$ (см. \cite[теоремы~1.3 и 5.3]{Va}; см. также
\cite[лемма~2.2, гл.~II]{Ri}).

\medskip
\begin{propos}\label{pr1D} Функция длины $l_{\gamma}:[a, b]\rightarrow {\Bbb R}$ спрямляемой кривой
$\gamma:[a, b]\rightarrow {\Bbb M}^n$ обладает следующим свойством:
$l_{\gamma}^{\,\prime}(t)$ и ${\gamma}^{\,\prime}(t)$ существуют
почти всюду, и, кроме того,
$l_{\gamma}^{\,\prime}(t)=|{\gamma}^{\,\prime}(t)|$ почти всюду.
(Здесь ${\gamma}^{\,\prime}(t)$ -- значение производной от кривой,
понимаемой в нормальных координатах).
\end{propos}
\begin{proof}
Поскольку функция $l_{\gamma}(t)$ монотонна, то она имеет почти
всюду конечную производную. Кроме того, кривая $\gamma$ может быть
покрыта не более, чем счётной системой нормальных окрестностей $U,$
в каждой из которых образ кривой $\gamma$ спрямляем в
соответствующих локальных координатах в ${\Bbb R}^n.$ Значит, в
локальных координатах кривая $\gamma$ также имеет конечную
производную. Заметим, что $l_{\gamma}(t_2)-l_{\gamma}(t_1)\geqslant
d(\gamma(t_2), \gamma(t_1)),$ где $d$ -- геодезическое расстояние,
так что неравенство $l_{\gamma}^{\,\prime}(t)\geqslant
|{\gamma}^{\,\prime}(t)|$ очевидно. Необходимо показать обратное
неравенство.

Обозначим через $A$ множество всех точек из $[a, b],$ для которых
$|{\gamma}^{\,\prime}(t)|<l_{\gamma}^{\,\prime}(t),$ причём
${\gamma}^{\,\prime}(t)$ и $l_{\gamma}^{\,\prime}(t)$ существуют
одновременно, и пусть $A_k$ -- множество всех точек $t\in A,$ для
которых
$$\frac{l_{\gamma}(q)-l_{\gamma}(p)}{q-p}\geqslant \frac{d(\gamma(q),\gamma(p))}{q-p}+1/k\,,$$
где $a\leqslant p\leqslant t\leqslant q \leqslant b$ и $0<q-p<1/k.$
Ясно, что для завершения доказательства достаточно установить, что
$m_1(A_k)=0$ при всяком $k=1,2,\ldots,$ где $m_1$ -- мера Лебега в
${\Bbb R}^1.$

Пусть $l(\gamma)$ означает длину кривой $\gamma.$ Для произвольного
$\varepsilon>0$ рассмотрим разбиение отрезка $[a, b]$ точками
$a=t_1\leqslant t_1\leqslant\ldots\leqslant t_m=b$ так, что
$l(\gamma)\leqslant \sum\limits_{k=1}^m d(\gamma(t_k),
\gamma(t_{k-1}))+\varepsilon/k$ и $t_{j}-t_{j-1}<1/k$ при всех
$j=1,2,\ldots, m.$ Если $[t_{j-1}, t_j]\cap A_k\ne \varnothing,$ то
по определению множества $A_k,$
$l_{\gamma}(t_j)-l_{\gamma}(t_{j-1})\geqslant d(\gamma(t_j),
\gamma(t_{j-1}))+(t_{j}-t_{j-1})/k.$ Следовательно, обозначив
$\Delta_j:=[t_{j-1}, t_j],$ будем иметь:
$$m_1(A_k)\leqslant\sum\limits_{\Delta_j\cap A_k\ne \varnothing}m_1(\Delta_j)\leqslant
k\sum\limits_{j=1}^m (l_{\gamma}(t_j)-l_{\gamma}(t_{j-1})-
d(\gamma(t_j), \gamma(t_{j-1}))) \leqslant$$
$$\leqslant k\sum\limits_{j=1}^m (l(\gamma)-
d(\gamma(t_j), \gamma(t_{j-1})))\leqslant \varepsilon\,.$$
Последнее соотношение доказывает равенство $m_1(A_k)=0$ а, значит,
поскольку $A=\bigcup\limits_{k=1}^{\infty} A_k,$ $m_1(A)=0,$ что и
требовалось установить.~$\Box$
\end{proof}

\medskip
\begin{propos}\label{pr1B}
Пусть $D\subset {\Bbb M}^n,$ $f:D\rightarrow {\Bbb M}^n_*$ и
$\alpha:\Delta\rightarrow D$ -- локально спрямляемая кривая такая,
что отображение $f$ абсолютно непрерывно на любой замкнутой
подкривой $\alpha.$ Тогда кривая $f\circ\alpha$ локально спрямляема,
и если $\rho:|f\circ \alpha|\rightarrow \overline{\Bbb R}$ --
неотрицательная борелевская функция, то
$$\int\limits_{f\circ \alpha}\rho(y)|dy|\leqslant\int\limits_{\alpha}\rho(f(x))L(x, f)|dx|\,,$$
где $L(x, f)$ определено первым соотношением в (\ref{eq1H}).
\end{propos}
\begin{proof}
Предположим вначале, что $\Delta$ --  замкнутый отрезок $[a, b].$
Поскольку по условию $f\circ \alpha^0$ абсолютно непрерывна, то она
спрямляема. Тогда спрямляема и кривая $f\circ \alpha,$ поскольку она
получается из $f\circ \alpha^0$ возрастающей заменой параметра (см.
теорему \ref{th1E}). Положим $l(\alpha)=p,$ $l(f\circ\alpha)=q,$ и
пусть $s:[0, p]\rightarrow [0, q]$ -- функция длины кривой
$f\circ\alpha^{\,0}$ (здесь и далее $\alpha^{\,0}$ означает
нормальное представление кривой $\alpha$). Повторяя рассуждения в
доказательстве достаточности предложения \ref{pr11}, мы заключаем,
что из абсолютной непрерывности кривой $f\circ \alpha^0$ следует
абсолютная непрерывность её функции длины $s(t).$ Полагая
$\beta=(f\circ\alpha)^0,$ ввиду пункта {\bf II} теоремы \ref{th1E}
мы будем иметь: $\beta\circ s= (f\circ \alpha^0)^0\circ
s=f\circ\alpha^0.$ Ввиду замены переменной под знаком интеграла
Лебега будем иметь:
$$\int\limits_{f\circ \alpha}\rho(y)|dy|=\int\limits_0^q\rho(\beta(t))dt=$$
\begin{equation}\label{eq3C}
=\int\limits_0^p\rho(\beta(s(u)))s^{\,\prime}(u)du=\int\limits_0^p\rho(f(\alpha^0(u)))s^{\,\prime}(u)du\,.
\end{equation}
По предложению \ref{pr1D},
$s^{\,\prime}(u)=|(f\circ\alpha^{\,0})^{\,\prime}(u)|$ почти всюду.
Рассмотрим такие $u,$ для которых производные
$(f\circ\alpha^{\,0})^{\,\prime}(u)$ и $\alpha^{0\,\prime}(u)$ в
нормальных координатах одновременно существуют. Поскольку $\alpha^0$
-- нормальное представление, мы можем найти последовательность
положительных чисел $r_j,$ $r_j\rightarrow 0$ при $j\rightarrow
\infty,$ такую что $\alpha^0(u+r_j)\ne \alpha^0(u).$ Тогда мы будем
иметь:
$$|(f\circ\alpha^{\,0})^{\,\prime}(u)|=\lim\limits_{j\rightarrow\infty}
\frac{d_*(f(\alpha^0(u+r_j)), f(\alpha^0(u)))}{d(\alpha^0(u+r_j),
\alpha^0(u))}\cdot\frac{d(\alpha^0(u+r_j),
\alpha^0(u))}{r_j}\leqslant$$
\begin{equation}\label{eq3A}
\leqslant L(\alpha^0(u), f)\cdot
|\alpha^{0\,\prime}(u)|\,,\end{equation}
где, как обычно, $d$ и $d_*$ -- геодезические расстояния на ${\Bbb
M}^n$ и ${\Bbb M}_*^n,$ соответственно. Так как в нормальных
координатах $|\alpha^{0\,\prime}(u)|=1$ и, кроме того,
$s^{\,\prime}(u)=|(f\circ\alpha^{\,0})^{\,\prime}(u)|$ (см.
рассуждения из доказательства достаточности предложения \ref{pr11}),
то из (\ref{eq3A}) вытекает, что
\begin{equation}\label{eq3B}
s^{\,\prime}(u)\leqslant L(\alpha^0(u), f)\,.\end{equation}
Окончательно, из (\ref{eq3C}) и (\ref{eq3B}) вытекает, что
$$\int\limits_{f\circ \alpha}\rho(y)|dy|\leqslant\int\limits_0^p\rho(f(\alpha^0(u)))L(\alpha^0(u), f)du=
\int\limits_{\alpha}\rho(f(x))L(x, f)|dx|\,,$$
что и требовалось установить.~$\Box$\end{proof}

\subsection{} Для произвольного $p\geqslant 1$ рассмотрим {\it
внешнюю дилатацию отображения $f$ порядка $p$}, определённую по
правилу
$$K_{O, p}(x,f)\quad =\quad \left\{
\begin{array}{rr}
\frac{L^p(x, f)}{J(x,f)}, & J(x,f)\ne 0,\\
1,  &  f^{\,\prime}(x)=0, \\
\infty, & \text{в остальных случаях}
\end{array}
\right.\,,$$
где $L(x, f)$ определена первым соотношением в (\ref{eq1H}).
Следующий результат представляет собой некоторое усиление одного из
классических модульных неравенств для отображений с ограниченным
искажением (см. \cite[теорема~2.4, гл.~II]{Ri}), однако, нами этот
результат доказан для произвольного $p\geqslant 1$ и на произвольном
римановом многообразии.

\medskip
\begin{theorem}\label{th1D}
{ Пусть $D\subset {\Bbb M}^n,$ $f:D\rightarrow {\Bbb M_*}^n$ --
отображение, дифференцируемое в области $D$ почти всюду, обладающее
$N$ и $N^{\,-1}$-свойствами Лузина и, кроме того, $f\in ACP_p.$
Предположим, $q:{\Bbb M}^n\rightarrow [0, \infty]$ -- борелевская
функция, такая что $K_{O, p}(x,f)\leqslant q(f(x))$ для почти всех
$x\in D.$ Тогда для любого семейства кривых $\Gamma,$ лежащего в
борелевском подмножестве $A\subset D,$ и для произвольной функции
$\rho^{\,\prime}\in {\rm adm\,}f(\Gamma)$ выполняется неравенство
$$M_{p}(\Gamma)\le \int\limits_{{\Bbb M}^n}\rho^{\,\prime p}(y)N(y, f, A)q(y)dv(y)\,,$$
где функция кратности $N(y, f, A),$ как обычно, определена в
(\ref{eq1.7A}).}
\end{theorem}
\begin{proof}
Пусть $\rho^{\,\prime}\in {\rm adm\,}f(\Gamma),$ тогда определим
функцию $\rho$ полагая $\rho(x)=\rho^{\,\prime}(f(x))L(x, f)$ при
$x\in A$ и $\rho(x)=0$ при $x\in {\Bbb M}^n\setminus A.$ Пусть
$\Gamma_0$ -- семейство всех замкнутых локально спрямляемых кривых
семейства $\Gamma,$ на которых $f$ локально абсолютно непрерывно,
тогда учитывая сделанное предположение на отображение $f,$ мы
получим: $M_p(\Gamma)=M_p(\Gamma_0).$ В таком случае, по предложению
\ref{pr1B}
$\int\limits_{\gamma} \rho(x) |dx|=\int\limits_{\gamma}
\rho^{\,\prime}(f(x))\Vert f^{\,\prime}(x)\Vert |dx|\ge
\int\limits_{f\circ\gamma} \rho^{\,\prime}(y) |dy| \ge 1,$
следовательно, $\rho\in {\rm adm\,}\Gamma_0.$ Учитывая возможность
замены переменной для отображений, дифференцируемых почти всюду и
обладающих $N$ и $N^{\,-1}$-свойствами Лузина (см., напр.,
\cite[предложение~8.3]{MRSY} либо \cite[лемма~8]{HP}), а также тот
факт, что для таких отображений $J(x, f)\ne 0$ почти всюду (см. там
же), мы будем иметь:
$$M_{p}(\Gamma)=M_{p}(\Gamma_0)\le \int\limits_{{\Bbb M}^n}
\rho^p(x)\,dv(x)=\int\limits_{A} \frac{\rho^{\,\prime\,p}(f(x))\Vert
f^{\,\prime}(x)\Vert^p J(x, f)}{J(x ,f)}\, dv(x)\le$$
$$
\le\int\limits_{A} \rho^{\,\prime\,p}(f(x))q(f(x))|J(x, f)|\,
dv(x)=\int\limits_{{\Bbb M}^n}\rho^{\,\prime p}(y)N(y, f,
A)q(y)dv(y)\,.$$ Теорема доказана.~$\Box$
\end{proof}

\medskip
В статье главным образом исследован случай $p\in (n-1, n].$ К
сожалению, об открытости и дискретности отображений, относящихся к
произвольному $p\geqslant 1,$ мы ничего не можем утверждать.

\end{document}